\numberwithin{equation}{section}
\theoremstyle{definition}
\newtheorem*{thm*}{Theorem}
\newtheorem{thm}{Theorem}[section]
\newtheorem{prop}[thm]{Proposition}
\newtheorem{lem}[thm]{Lemma}
\newtheorem{cor}[thm]{Corollary}
\newtheorem{defn}[thm]{Definition}
\newtheorem*{pf1}{Proof of Theorem \ref{thm1}}
\newtheorem*{pf2}{Proof of Theorem \ref{thm2}}
\newtheorem{rmk}[thm]{Remark}
\newtheorem*{ack}{Acknowledgements}
\theoremstyle{plain}
\newcommand{\al}{\alpha}
\newcommand{\gam}{\gamma}
\newcommand{\del}{\delta}
\newcommand{\ep}{\varepsilon}
\newcommand{\kap}{\kappa}
\newcommand{\lam}{\lambda}
\newcommand{\sig}{\sigma}
\renewcommand{\phi}{\varphi}
\newcommand{\Del}{\Delta}
\newcommand{\Lam}{\Lambda}
\newcommand{\Om}{\Omega}
\newcommand{\N}{\mathbb{N}}
\newcommand{\R}{\mathbb{R}}
\newcommand{\Z}{\mathbb{Z}}
\newcommand{\cF}{\mathcal{F}}
\newcommand{\cK}{\mathcal{K}}
\newcommand{\cM}{\mathcal{M}}
\newcommand{\cS}{\mathcal{S}}
\newcommand{\fX}{\mathfrak{X}}
\renewcommand{\leq}{\leqslant}
\renewcommand{\geq}{\geqslant}
\newcommand{\pl}{\partial}
\newcommand{\norm}[1]{\left\|#1\right\|}
\newcommand{\cd}{\mathrel{}\middle|\mathrel{}}
\begin{document}
%%%%%%%%%%%%%%%%%%%%%%%%%%%%%%%%%%%%%%%%%%%%%%%%%%%%%%%%%%%%%%%%
\title{Solvability of inhomogeneous fractional semilinear heat equations in Lorentz--Morrey spaces}
%\title{Solvability of fractional semilinear heat equations with distributional forces in Lorentz spaces}
\author{Yusuke Oka}
\date{\today}

\maketitle

%%%%%%%%%%%%%%%%%%%%%%%%%%%%%%%%%%%%%%%%%%%%%%%%%%%%%%%%%%%%%%%%%%
\begin{abstract}
  Of concern is the Cauchy problem for the fractional semilinear heat equation with inhomogeneous terms
  \begin{align*}
    \begin{cases}
      \pl_{t}u+(-\Del)^{\frac{\theta}{2}}u=|u|^{\gam -1}u+\mu, & \quad x\in\R^{N},\: t>0,\\
      u(x,0)\equiv0, & \quad x\in\R^{N},
    \end{cases}\tag{P}
  \end{align*}
  where $\theta>0$, $\gam>1,\: N\in\Z_{\geq 1}$ and $\mu$ is a tempered distribution on $\R^{N}$.
  %By establishing a real interpolation result for Lorentz--Morrey spaces,
  By introducing the Lorentz--Morrey spaces,
  we overcome limitations of real interpolation in the classical local-Morrey spaces
  and obtain a sharp integral estimate for the nonlinear term.
  Moreover, in terms of Besov-type spaces,
  we give necessary conditions and sufficient conditions on $\mu$ for the local-in-time solvability of $\rm{(P)}$ in Lorentz--Morrey spaces.
\end{abstract}
%%%%%%%%%%%%%%%%%%%%%%%%%%%%%%%%%%%%%%%%%%%%%%%%%%%%%%%%%%%%%%%%%%
%for arXiv
% We study the Cauchy problem for the fractional semilinear heat equation with distributional forcing terms.
% By introducing the Lorentz--Morrey spaces,
% we overcome limitations of real interpolation in the classical local-Morrey spaces and obtain a sharp integral estimate for the nonlinear term.
% Moreover, in terms of Besov-type spaces,
% we give necessary conditions and sufficient conditions on forcing terms for the local-in-time existence of solutions belonging to Lorentz--Morrey spaces.

\vspace{25pt}
\noindent Address:

\smallskip
\noindent
% Y.~O.:  Graduate School of Mathematical Sciences, The University of Tokyo,\\
Graduate School of Mathematical Sciences, The University of Tokyo,\\
3-8-1 Komaba, Meguro-ku, Tokyo 153-8914, Japan.

\smallskip
\noindent
E-mail: {\tt oka@ms.u-tokyo.ac.jp}\\

\vspace{20pt}

\noindent
{\it 2020 Mathematics Subject Classification.}
35A01, 35K58, 35R11
\vspace{3pt}

\noindent
{\it Keywords.} fractional heat equation, inhomogeneous Besov spaces, inhomogeneous term, solvability
\vspace{3pt}
%Lorentz spaces,

% \noindent
% {\it Abbreviate title:} fractional semilinear equations with distributional forces

%%%%%%%%%%%%%%%%%%%%%%%%%%%%%%%%%%%%%%%%%%%%%%%%%%%%%%%%%%%%%%%%%%%%%%%%
\newpage
%%%%%%%%%%%%%%%%%%%%%%%%%%%%%%%%%%%%%%%%%%%%%%%%%%%%%%%%%%%%%%%%%%%%%%%%
%%%%%%%%%%%%%%%%%%%%%%%%%%%%%%%%%%%%%%%%%%%%%%%%%%%%%%%%%%%%%%%%%%%%%%%%
\section{Introduction and main results}\label{sec1}
%%%%%%%%%%%%%%%%%%%%%%%%%%%%%%%%%%%%%%%%%%%%%%%%%%%%%%%%%%%%%%%%%%%%%%%%
%%%%%%%%%%%%%%%%%%%%%%%%%%%%%%%%%%%%%%%%%%%%%%%%%%%%%%%%%%%%%%%%%%%%%%%%
We study the Cauchy problem for the fractional semilinear heat equation %with a forcing term
\begin{align}\label{eq}
  \begin{cases}
    \pl_{t}u+(-\Del)^{\frac{\theta}{2}}u=|u|^{\gam -1}u+\mu, & \quad x\in\R^{N},\: t>0,\\
    u(x,0)\equiv 0, & \quad x\in\R^{N},
  \end{cases}
  \tag{P}
\end{align}
where $\pl_{t}:=\pl/\pl t$, $\gam>1$, $\theta>0$, $N\in\Z_{\geq 1}$ and the inhomogeneous term $\mu=\mu(x)$ is a tempered distribution on $\R^{N}$.
The symbol $(-\Del)^{\frac{\theta}{2}}$ represents the fractional power of the Laplace operator $-\Del$ in $\R^{N}$, i.e.,
\begin{align*}
  (-\Del)^{\frac{\theta}{2}}f(x)
  =
  \cF^{-1}\left[
    |\xi|^{\theta}\cF f(\xi)
  \right](x),
  \quad x\in\R^{N}.
\end{align*}
Here $\cF$ and $\cF^{-1}$ denote the Fourier transform and its inverse on $\R^{N}$, respectively.
%%%%%%%%%%%%%%%%%%%%%%%%%%%%%%%%%%%%%%%%%%%%%

%Our aim is to extend the solvability results for problem \eqref{eq} for a wide class of inhomogeneous terms in light of Besov--Morrey spaces.
The aim of the present paper is twofold.
% First, by utilizing the framework of Besov-type spaces,
% we construct local-in-time solutions to problem \eqref{eq} for a wide class of inhomogeneous terms, including distributions such as the derivative of Dirac delta, as well as Radon measures.
%First, we introduce the Lorentz--Morrey spaces (see Definition \ref{def:Lo}.\ref{def:Lo-2})
%to overcome the lack of real interpolation property in the classical Morrey spaces.
%within the framework of Morrey-type spaces,
% we formulate a real interpolation property which is applicable to critical estimates on the nonlinear term.
The first aim is to construct local-in-time solutions to problem \eqref{eq} for a wide class of inhomogeneous terms,
including distributions not previously treated,
such as derivatives of the Dirac delta and certain measurable functions that are not locally integrable.
Inhomogeneous Besov--Morrey spaces are well-suited to this end;
however, due to the insufficiency of real interpolation in the classical Morrey spaces,
one cannot expect sharp results.
We therefore introduce the Lorentz--Morrey spaces and verify their properties.
%Second, in terms of Besov-type spaces,
Second, by establishing a new estimate on Besov-type spaces,
we provide a necessary condition on the inhomogeneous term $\mu$ for the existence of local-in-time (possibly) sign-changing solutions belonging to Lorentz--Morrey spaces.

From the perspective of solvability, problem \eqref{eq} has been studied by many mathematicians (see e.g., \cite{BLZ2000, HIT2020, IKT2024, KT2017, KK2004, KK2005, Zhen2007, Zhang1998, Zhang1999} and references therein).
%In these works, attention has been paid to various characteristics of the inhomogeneous terms, such as their decay rate at spatial infinity and certain integral quantities.
Among others, spatial singularities of inhomogeneous terms were considered in \cite{HIT2020, IKT2024},
and sufficient conditions and necessary conditions on the inhomogeneous terms for the local-in-time solvability of \eqref{eq} have been established in terms of local growth rates and function spaces.
% In particular, spatial singularities of the inhomogeneous terms were considered in \cite{HIT2020, IKT2024},
% and in terms of local growth rates and function spaces,
% some sufficient conditions and necessary conditions on the inhomogeneous terms have been established that the local-in-time solvability of \eqref{eq} hold.
Hereafter, we write
\[
  \gam_{\ast}:=\frac{N}{N-\theta}\,\,\text{ if }\,\,\theta\in\left]0,N\right[
  \quad\text{and}\quad
  \gam_{\ast}:=\infty\,\,\text{ if }\,\,\theta\geq N.
\]

We first review previous work on sufficient conditions for the existence of solutions to \eqref{eq}, and then state our objective.
%We first focus on sufficient conditions for the existence of solutions to \eqref{eq}.
Hisa, Ishige and Takahashi \cite{HIT2020} demonstrated some solvable criteria for $\mu$ in the case of $\theta\leq 2$.
% aimed to identify the spatially strongest singular profile of $\mu$ for the existence of nonnegative solutions and succeeded in providing some solvable criteria for $\mu$.
Their results on sufficient conditions can be summarized as follows.
There exists a constant $A>0$ depending on $N$, $\theta$ and $\gam$ such that the following statements hold.
\begin{enumerate}[\rm(A)]
  \item\label{HIT-2} Let $\gam<\gam_{\ast}$.
  If a nonnegative Radon measure $\mu$ in $\R^{N}$ satisfies
  \begin{align*}
    \sup_{x\in\R^{N}}
    \mu(B(x,T^{1/\theta}))\leq
    AT^{\frac{N}{\theta}-\frac{\gam}{\gam-1}}
  \end{align*}
  for some $T>0$,
  then problem \eqref{eq} admits a solution in $\R^{N}\times\left[0,T\right[$.
  \item\label{HIT-1} Let $\gam\geq\gam_{\ast}$.
  If a nonnegative measurable function $\mu:\R^{N}\to\R$ satisfies
  \begin{align*}
    \mu(x)
    \leq
    \begin{cases}
      A|x|^{-\frac{\theta\gam}{\gam-1}}+C_{0}
      &\text{if }\gam>\gam_{\ast},
      \\
      A|x|^{-N}\left[\log\left(e+\frac{1}{|x|}\right)\right]^{-\frac{N}{\theta}}+C_{0}
      &\text{if }\gam=\gam_{\ast},
    \end{cases}
  \end{align*}
  for some $C_{0}>0$,
  then problem \eqref{eq} admits a local-in-time solution.
\end{enumerate}
% \begin{thm}[{\cite[Theorem 1.2, Theorem 1.4]{HIT2020}}]\label{thm:HIT}
%   There exists a constant $A>0$ depending on $N$, $\theta$, $\gam$ such that the following claims hold.
%   \begin{enumerate}[\rm(1)]
%     \item\label{HIT-2} Let $\gam<\frac{N}{N-\theta}$.
%     If a nonnegative Radon measure $\mu$ in $\R^{N}$ satisfies
%     \begin{align*}
%       \sup_{x\in\R^{N}}
%       \mu(B(x,T^{1/\theta}))\leq
%       AT^{\frac{N}{\theta}-\frac{\gam}{\gam-1}}
%     \end{align*}
%     for some $T>0$,
%     then problem \eqref{eq} admits a solution in $\R^{N}\times\left[0,T\right[$.
%     \item\label{HIT-1} Let $\gam\geq\frac{N}{N-\theta}$.
%     If a nonnegative measurable function $\mu:\R^{N}\to\R$ satisfies
%     \begin{align}
%       0\leq\mu(x)
%       \leq
%       \begin{cases}
%         A|x|^{-\frac{\theta\gam}{\gam-1}}+C_{0}
%         &\text{if }\gam>\frac{N}{N-\theta},
%         \\
%         A|x|^{-N}\left[\log\left(e+\frac{1}{|x|}\right)\right]^{-\frac{N}{\theta}}+C_{0}
%         &\text{if }\gam=\frac{N}{N-\theta},
%       \end{cases}
%     \end{align}
%     for some $C_{0}>0$,
%     then problem \eqref{eq} admits a local-in-time solution.
%   \end{enumerate}
% \end{thm}
Statement (\ref{HIT-2}) was shown by the contraction mapping theorem in uniformly local $L^{\gam}$ spaces,
while (\ref{HIT-1}) was obtained by constructing of supersolutions.

Ishige, Kawakami and Takada \cite{IKT2024} found sufficient conditions on $\mu$ for the case of $\gam\geq\gam_{\ast}$ and $\theta\leq 2$ by combining the real interpolation method with the uniformly local Zygmund type spaces,
which was recently introduced in \cite{IIK2025} and applied to Fujita critical Cauchy problem (\eqref{eq:initial} below with $\gam=1+\theta/N$).
%Their results were proved by fixed point theorem in uniformly local Lorentz spaces/Zygmund type spaces and were consistent with Theorem (\ref{HIT-1}).
% For the definition of uniformly local Lorentz spaces, see Definition \ref{def:Lo}
% (We do not work with Zygmund type spaces, see \cite{IIK2025, IKT2024} for detail).
%Their sufficient conditions are sharp, but unfortunately, their function spaces does not contain distributional forcing terms.
In light of the statement (\ref{HIT-1}), their sufficient conditions are sharp with respect to the integrability index of function spaces.
However,
their function spaces do not take the effect of sign-changing of functions into account adequately.

In this paper, we adopt Besov--Lorentz--Morrey (BLM) spaces (see Definition \ref{def:iBL}) as the space of inhomogeneous terms.
It is well known that Morrey spaces do not fit well with the real interpolation,
but we can recover it by introducing the Lorentz spaces as a base of Morrey spaces (see Proposition \ref{cor-LRI}).
We note that this is established through a discussion of block spaces in \cite{Fe2016}.
Here, we give a direct proof,
%which enables us to obtain a local Morrey version as well as more general versions of real interpolation for Morrey-type spaces.
which enables us to obtain more general versions of real interpolation for Morrey-type spaces,
including a local Lorentz--Morrey version.
As a consequence, we can obtain a sharp sufficient condition for a wide range of inhomogeneous terms (see Theorem \ref{thm1}).
%This may be regarded as an inhomogeneous problem version of the works by Kozono and Yamazaki \cite{KozonoYamazaki} and by Zhanpeisov \cite{Zhanpeisov23} which constructed local-in-time solutions to problem \eqref{eq:initial} for distributional initial data.
In the case of $\gam>\gam_{\ast}$,
our result covers previous studies \cite{HIT2020, IKT2024}
and takes advantage of Morrey spaces, allowing us to handle a rich class of $\mu$
%and benefits from the Morrey spaces, which allows us to take a rich class of $\mu$
(see Remark \ref{rmk1}, \ref{rmk3} and \ref{rmk4}).
Our existence theorem also works well in the case of $\gam<\gam_{\ast}$;
derivatives of the Dirac delta are dealt under suitable conditions on $\theta$ and $\gam$,
as well as the sufficient condition in \cite{HIT2020} is covered (see Remark \ref{rmk2}).

\medskip

Secondly, we tackle necessary conditions for the existence of solutions.
Hisa, Ishige and Takahashi \cite{HIT2020} (building on \cite{HisaIshige2018, KK2004, KK2005}) found a necessary condition on Radon measure $\mu$ for the existence of nonnegative local-in-time solutions to \eqref{eq}.
If $u$ is a nonnegative solution to problem \eqref{eq} on $\left]0,T\right[$ for some $T>0$,
then the inhomogeneous term $\mu$ is a Radon measure and satisfies the following local growth conditions:
There exist constants $A_{0}$, $A_{1}>0$ such that the estimate
\begin{align*}
  \sup_{x\in\R^{N}}\mu(B(x,\sig))
  \leq A_{0}\sig^{N-\frac{\theta\gam}{\gam-1}}
  \quad\text{for }\sig\in\left]0,T^{1/\theta}\right]
\end{align*}
holds.
In addition, when $\gam=\gam_{\ast}$, it holds that
\begin{align*}
  \sup_{x\in\R^{N}}\mu(B(x,\sig))
  \leq A_{1}
  \left[
    \log\left(
      e+\frac{T^{1/\theta}}{\sig}
    \right)
  \right]^{1-\frac{N}{\theta}}
  \quad\text{for }\sig\in\left]0,T^{1/\theta}\right].
\end{align*}
These conditions are sharp, considering their sufficient conditions (Claims (\ref{HIT-2}) and (\ref{HIT-1})).
We remark that their argument strongly depends on the nonnegative assumption on $u$ and thus we cannot apply their theory to sign-changing solutions.

Recently, in the study of the Cauchy problem of the Navier--Stokes equation in $\R^{N}$,
Kozono, Okada and Shimizu \cite{KOS2020, KOS2021} proved that if there exists a Serrin class solution,
then the initial data necessarily belongs to a specific homogeneous/inhomogeneous Besov spaces $B^{-1+N/p,q}_{p}$
(moreover they proved the existence of Serrin class solutions for initial data which belong to $B^{-1+N/p,q}_{p}$).
% Recently, Kozono, Okada and Shimizu \cite{KOS2020, KOS2021} characterized the class of initial data which give Serrin class solutions for the Navier--Stokes equation.
The key tool of their study is the characterization of homogeneous/inhomogeneous Besov spaces via heat semigroup (see e.g., \cite{BCDbook, Bui1984, Tri1988}).
It is noteworthy that positivity of $u$ is not assumed in their results.
See also \cite{FGH2016}.

Inspired by their works, we obtain necessary conditions on inhomogeneous terms for the existence of local-in-time solutions to \eqref{eq} in Lorentz--Morrey spaces (see Theorem \ref{thm2}).
Although our approach requires solutions to belong to specific function spaces,
which seems to be a strong assumption compared to \cite{HIT2020},
%we do not need to impose the positivity assumption on $u$.
the positivity assumption on $u$ is not needed.
This feature serves as a comparative advantage over \cite{HIT2020}.
Since the manner in which distributions $\mu$ appear in \eqref{eq} is substantially different from that in \eqref{eq:initial} (cf. \eqref{inteq} and \eqref{eq:thm-01}),
a new estimate for the Besov-type norm of $\mu$ needs to be established (see Proposition \ref{prop0}).

Throughout the following, the letter $C$ denotes a generic positive constant, which may change from line to line.
%%%%%%%%%%%%%%%%%%%%%%%%%%%%%%%%%%%%%%%%%%%%%%%%%%%%%%%%%%%%%%%%%%%%%%%%
\subsection{Function spaces and definition of solutions}
%%%%%%%%%%%%%%%%%%%%%%%%%%%%%%%%%%%%%%%%%%%%%%%%%%%%%%%%%%%%%%%%%%%%%%%%
We introduce some function spaces.
%For any normed space $X$,
%the symbol $\norm{f\mid X}$ signifies the norm of $f$ in $X$.
Henceforth, the symbol $\norm{f\cd X}$ denotes the quasi-norm of $f$ in any quasi-normed space $X$.

First we define the spaces for our solutions to problem \eqref{eq}.
For any mesurable function $f$, we define a function $\al_{f}:\R_{> 0}\to\left[0,\infty\right]$ as
\begin{align*}
  \al_{f}(\sig):=
  \left|
    \left\{
      x\in\R^{N}\mid |f(x)|>\sig
    \right\}
  \right|
\end{align*}
and a function $f^{\ast}:\R_{>0}\to\left[0,\infty\right]$ as
\begin{align*}
  f^{\ast}(\lam)
  &:=
  \sup
  \left\{
    \sig\in\R_{>0}
    \cd
    \al_{f}(\sig)
    >\lam
  \right\}.
  % \\
  % &=
  % \sup
  % \left\{
  %   \sig\in\R_{>0}
  %   \cd
  %   \left|
  %     \left\{
  %       x\in\R^{N}\mid |f(x)|>\sig
  %     \right\}
  %   \right|
  %   >\lam
  % \right\},
\end{align*}
%where $|E|$ denotes the $N$-dimensional Lebesgue measure for any measurable set $E$.
Here we adopt the convention $\sup\varnothing=0$,
and $|E|$ denotes the $N$-dimensional Lebesgue measure of a measurable set $E$.
\begin{defn}%\label{def:Lo}
  { (Lorentz spaces and Lorentz--Morrey spaces)}
  \begin{enumerate}
    \item%\label{def:Lo-1}
    Let $p\in\left]0,\infty\right[$ and $q\in\left]0,\infty\right]$.
    The Lorentz space $L^{p,q}(\R^{N})$ is defined as the set of measurable functions $f:\R^{N}\to\R$ such that $\norm{f\cd L^{p,q}}<\infty$,
    \begin{align*}
      \text{where }
      \norm{f\cd L^{p,q}}
      :=
      \begin{dcases}
        \left(
          \int_{0}^{\infty}
          \left(
            \lam^{\frac{1}{p}}
            f^{\ast}(\lam)
          \right)^{q}
          \,\frac{d\lam}{\lam}
        \right)^{1/q}
        &\text{for }q\in\left]0,\infty\right[,
        \\
        \sup_{\lam>0}\,
        \lam^{\frac{1}{p}}
        f^{\ast}(\lam)
        =\sup_{\lam>0}\lam\al_{f}(\lam)^{1/p}
        &\text{for }q=\infty.
      \end{dcases}
    \end{align*}
    \item%\label{def:Lo-2}
    Let $0<q\leq p<\infty$ and $\kap\in\left]0,\infty\right]$.
    The (local) Lorentz--Morrey space $M^{p}_{q,\kap}(\R^{N})$ is defined
    as the set of measurable functions $f:\R^{N}\to\R$ such that
    \begin{align*}
      \norm{f\cd M^{p}_{q,\kap}}
      &:=
      \sup_{0<R\leq 1}\sup_{z\in\R^{N}}
      R^{\frac{N}{p}-\frac{N}{q}}
      \norm{f\chi_{B(z,R)}\cd L^{q,\kap}(\R^{N})}
      <\infty,
      % \\
      % &=
      % \sup_{0<R\leq 1}\sup_{z\in\R^{N}}
      % R^{\frac{N}{p}}
      % \norm{f(Rx)\chi_{B(z,1)}(x)\cd L^{q,\kap}_{x}(\R^{N})}
      % <\infty
    \end{align*}
    where $\chi_{E}$ is the characteristic function of $E(\subset\R^{N})$ and $B(x,r)$ is the open ball in $\R^{N}$ with center $x\in\R^{N}$ and radius $r>0$.
  \end{enumerate}
  % Let $1\leq q<p<\infty$ and $r\in\left]0,1\right]$.
  % The (local) Lorentz--Morrey space $M^{p}_{q,r}(\R^{N})$ is defined
  % as the set of Lebesgue measurable functions $f:\R^{N}\to\R$ such that
  % \begin{align*}
  %   \norm{f\cd M^{p}_{q,r}}
  %   :=
  %   \sup_{0<R\leq 1}\sup_{z\in\R^{N}}
  %   R^{\frac{N}{p}-\frac{N}{q}}
  %   \norm{f\chi_{B(z,R)}\cd L^{q,r}(\R^{N})}
  %   <\infty
  % \end{align*}
  % , where $\chi_{E}$ is the characteristic function of $E(\subset\R^{N})$ and $B(x,r)$ is the open ball in $\R^{N}$ with center $x\in\R^{N}$ and radius $r>0$.
\end{defn}
%%%%%%%%%%%%%%%%%%%%%%%%%%%%%%%%%%%%%%%%%%%%%%%%%%%%%%%%%
\begin{rmk}
  \,
  \begin{enumerate}
    \item
    Lorentz spaces $L^{p,q}$ are just quasi-normed spaces, but for $p\in\left]1,\infty\right[$ and $q\in\left[1,\infty\right]$, they can be equipped with an equivalent norm (see Remark \ref{rmk2-1}).
    %In this paper, we only use these spaces in the case of $p>\gam>1$, $q\geq 1$ and thus we regard the spaces $\left(L^{p,q},\norm{\cdot\cd L^{p,q}}\right)$ as Banach spaces by abuse of notation.
    Thus, we regard the spaces $\left(L^{p,q},\norm{\cdot\cd L^{p,q}}\right)$ as Banach spaces by abuse of notation,
    in the case of $p>1$ and $q\geq 1$.
    \item
    %We can prove that uniformly local Lorentz spaces $L^{p,q}_{\mathrm{ul}}$ are Banach spaces if $L^{p,q}$ are Banach spaces.
    Choosing $q$ and $r$ so that the Lorentz space $L^{q,r}$ is a Banach space,
    we can prove that the local Lorentz--Morrey space $M^{p}_{q,r}$ is also a Banach space.
    \item Fix $0<q\leq p<\infty$.
    Since it follows from the identity
    $\al_{f^{\ast}}=\al_{f}$ and the layer-cake representation
    that $L^{q,q}=L^{q}$, we define the local Morrey spaces by
    $M^{p}_{q}:=M^{p}_{q,q}$.
    It is a Banach space if $q\geq 1$.
  \end{enumerate}
  % \newline
  % 1.
  % Lorentz spaces $L^{p,q}$ are just quasi-normed spaces, but for $p\in\left]1,\infty\right[$ and $q\in\left[1,\infty\right]$, they can be equipped with the equivalent norm (see Remark \ref{rmk2-1}).
  % In this paper, we only use these spaces in the case of $p>\gam>1$, $q\geq 1$ and thus we regard the spaces $\left(L^{p,q},\norm{\cdot\cd L^{p,q}}\right)$ as Banach spaces by abuse of notation.
  % \\
  % 2.
  % If we pick $p$ and $q$ as the Lorentz space $L^{p,q}$ is a Banach space, we can prove that the uniformly local Lorentz space $L^{p,q}_{\mathrm{ul}}$ is also a Banach space.
\end{rmk}

We introduce the inhomogeneous Littlewood--Paley dyadic decomposition in order to define the inhomogeneous Besov-type spaces.
Let $\zeta(t)\in C_{c}^{\infty}\left(\left[0,\infty\right[;\R\right)\:$ satisfy $\:0\leq\zeta(t)\leq 1,\:\zeta(t)\equiv 1$ for $t\leq 3/2$ and $\text{supp}\:\zeta\subset\left[0,5/3\right[$.
We set
\begin{align*}
  \phi_{(0)}(\xi):=\zeta(|\xi|),\quad
  \phi_{0}(\xi):=\zeta(|\xi|)-\zeta(2|\xi|),\quad
  \phi_{j}(\xi):=\phi_{0}(2^{-j}\xi)\quad\text{for}\,\, j\in\Z.
\end{align*}
Then we have $\phi_{(0)}(\xi),\phi_{j}(\xi)\in C_{c}^{\infty}(\R^N)$ for all $j\in\Z$ and
\begin{align*}
  \phi_{(0)}(\xi)+\sum_{j=1}^{\infty}\phi_{j}(\xi)=\lim_{n\rightarrow\infty}\zeta(2^{-n}|\xi|)=1\quad\text{for all }\xi\in\R^{N}.
\end{align*}
%Let $\N$ denote the set of natural numbers $\{0,1,2,...\}$.
For $f\in\cS'(\R^{N})$ and $j\in\N:=\left\{0,1,2,...\right\}$, we write
\begin{align*}
  \Del_{j}f
  :=
  \begin{cases}
    \cF^{-1}\left[\phi_{(0)}(\xi)\cF f\right]
    &\text{for }j=0,
    \\
    \cF^{-1}\left[\phi_{j}(\xi)\cF f\right]
    &\text{for }j\geq 1.
  \end{cases}
  % \,\,\text{ and }\,\,
  % \dot{\Del}_{j}f
  % :=\cF^{-1}\left[\phi_{j}(\xi)\cF f\right]
  % &\quad\text{for }j\in\Z.
\end{align*}
%(The notation $\dot{\Del}_{j}$ is used only in the proof of Proposition \ref{prop3-an}.)
%Here, $\cF$ and $\cF^{-1}$ represent the Fourier transform and its inverse on $\R^{N}$, respectively.
Now we define the spaces of inhomogeneous terms.
\begin{defn}\label{def:iBL}
  {(inhomogeneous BLM spaces)}

  Let $1<q\leq p<\infty$, $r,\kap\in\left[1,\infty\right]$ and $s\in\R$.
  The inhomogeneous Besov--Lorentz--Morrey (BLM) space $B^{s,r}_{p,q,\kap}(\R^N)$ is defined as the set of tempered distributions $f\in\cS'(\R^N)$ such that
  \begin{align*}
    \Del_{j}f&\in M^{p}_{q,\kap}(\R^{N})
    \quad\text{for all }j\in\N\quad\text{and }
    \\
    \norm{f\mid B^{s,r}_{p,q,\kap}}&\coloneqq
    \norm{\left\{2^{sj}\norm{\Del_{j}f
    \mid M^{p}_{q,\kap}}\right\}_{j\in\N}\cd l^{r}(\N)}
    <\infty.
  \end{align*}
\end{defn}
%%%%%%%%%%%%%%%%%%%%%%%%%%%%%%%%%%%%%%%%%%%%%%%%%%%%%%%%%
We also let $B^{s,r}_{p}$ and $B^{s,r}_{p,q}$ denote the usual Besov spaces and Besov--Morrey spaces (defined in the same way as inhomogeneous BLM spaces by replacing $M^{p}_{q,\kap}$ with $L^p$ and $M^{p}_{q}$), respectively.
This kind of generalization of Besov spaces was introduced by Kozono and Yamazaki \cite{KozonoYamazaki} and has been effectively used in the study of semilinear parabolic equations (see e.g., \cite{OZ2024, Zhanpeisov23}).
%Their general properties can be found in \cite{}.
One can consult \cite{LRbook, Tri1} for general references.
%The reason we adopt Lorentz spaces as their base is to deal with the nonlinear term in Serrin supercritical case (see Lemma \ref{lem3-1}).
%The reason we do not employ Morrey spaces but instead employ Lorentz--Morrey spaces as a base of Besov spaces lies in the Serrin supercritical case (see Lemma \ref{lem3-1}).
%We adopt Lorentz spaces as their basis in order to deal with the nonlinear term (see Lemma \ref{lem3-1}) in Serrin supercritical case.
We mainly use spaces $B^{s,r}_{p,q,\kap}$ for the case of $r\in\left\{1,\infty\right\}$ and $\kap=\infty$.
Next we put $S(t)f:=\cF^{-1}\left[\exp\left(-t|\xi|^{\theta}\right)\cF f\right]$
for every $f\in\cS'(\R^{N})$ and $t>0$ to formulate our definition of solutions.
%%%%%%%%%%%%%%%%%%%%%%%%%%%%%%%%%%%%%%%%%%%%%%%%%%%%%%%%%
\begin{defn}\label{def:1-2}
  Let $1<b\leq a<\infty$, $s\in\R$, $q,r,c\in\left[1,\infty\right]$ and $T>0$.
  For $\mu\in B^{s,r}_{a,b,c}(\R^{N})$,
  we say that a measurable function $u:\R^{N}\times\left]0,T\right[\to\R$ is a {\it solution} to \eqref{eq} on $\left]0,T\right[$
  if $u$ satisfies
  % \begin{align*}
  %   |u(x,t)|<\infty\quad\text{a.e.-}(x,t)\in\R^{N}\times\left]0,T\right[,
  % \\
  %   \int_{0}^{t}\norm{u(\cdot,\tau)\mid L^{p,\infty}}^{\gam}\,d\tau
  %   <\infty\quad\text{for }t\in\left]0,T\right[
  % \end{align*}
  \begin{align*}
    u\in L^{\infty}(0,T;M^{p}_{q,\infty}(\R^{N}))
  \end{align*}
  % \begin{align*}
  %   |u(x,t)|<\infty\quad\text{for a.e.-}(x,t)\in\R^{N}\times\left]0,T\right[
  % \end{align*}
  for some $1<q\leq p<\infty$
  and the following integral equation
  \begin{align}\label{inteq}
    u(x,t)
    =
    \int_{0}^{t}
    S\left(t-\tau\right)
    \left[
      \left|u\right|^{\gam-1}u(\cdot,\tau)
    \right](x)
    \,d\tau
    +
    \int_{0}^{t}
    S\left(t-\tau\right)
    \mu(x)
    \,d\tau
  \end{align}
  for a.e.-$(x,t)\in\R^{N}\times\left]0,T\right[$.
\end{defn}
%%%%%%%%%%%%%%%%%%%%%%%%%%%%%%%%%%%%%%%%%%%%%%%%%%%%%%%%%
% The first term in right hand side of \eqref{inteq} is an absolutely converging Bochner integral in $L^{p/\gam,\infty}$\textcolor{red}{指数ちがくない？}.
There is a possibility that the second term of the right hand side of \eqref{inteq} is not a function if $\mu$ is a general tempered distribution.
In our existence theorem (Theorem \ref{thm1}),
such cases are eliminated and it certainly exists as an element of $L^{\infty}\left(0,T; M^{p}_{q,\infty}(\R^{N})\right)$.
We do not consider about initial condition in Definition \ref{def:1-2} but we make a comment on it (see Remark \ref{rmk3-5}).
%%%%%%%%%%%%%%%%%%%%%%%%%%%%%%%%%%%%%%%%%%%%%%%%%%%%%%%%%%%%%%%%%%%%%%%%
\subsection{Main results}
%%%%%%%%%%%%%%%%%%%%%%%%%%%%%%%%%%%%%%%%%%%%%%%%%%%%%%%%%%%%%%%%%%%%%%%%

First we state sufficient conditions for local-in-time existence.
%%%%%%%%%%%%%%%%%%%%%%%%%%%%%%%%%%%%%%%%%%%%%%%%%%%%%%%%%
\begin{thm}\label{thm1}
  Let $\gam>1$, $\gam<q\leq p$ and $p\geq{N(\gam-1)}/{\theta}$.% and $-\theta<s<0$.
  \vspace{-5pt}
  \begin{enumerate}[{\rm (1)}]
    \item\label{thm1-1} %Assume
    %$p\geq\frac{N(\gam-1)}{\theta}$.
    There exist positive constants $\del$, $M$ and $T$ such that for every $\mu\in\cS'(\R^{N})$ satisfying
    $\norm{\mu\cd B^{-\theta,1}_{p,q,\infty}}<\del$,
    problem \eqref{eq} possesses a solution $u$ on $\left]0,T\right[$ with a bound
    \begin{align}\label{thm-bd}
      \sup_{t\in\left]0,T\right[}
      \norm{u(\cdot,t)
      \cd M^{p}_{q,\infty}(\R^{N})}\leq M.
    \end{align}
    %\vspace{-10pt}
    \item\label{thm1-2} %Further we assume $\gam>\frac{N}{N-\theta}$ and $p=\frac{N(\gam-1)}{\theta}$.
    There exist $\del>0$ and $M>0$ such that for every
    $\mu\in\cS'(\R^{N})$ satisfying
    \begin{align*}
      \mu\in B^{\ep-\theta,\infty}_{\frac{Np}{N+p\ep},\frac{Nq}{N+p\ep},\infty}(\R^{N})
      \quad\text{and}\quad
      \limsup_{j\to\infty}
      2^{(\ep-\theta)j}\norm{\Del_{j}\mu\cd M^{\frac{Np}{N+p\ep}}_{\frac{Nq}{N+p\ep},\infty}(\R^{N})}<\del
    \end{align*}
    for some $\ep\in\left]0,\min\left\{\theta,N(q-1)/p\right\}\right[$,
    problem \eqref{eq} possesses a solution $u$ on $\left]0,T\right[$ for some $T>0$ with a bound
    \eqref{thm-bd}.
    % \item\label{thm1-2} Assume $\gam>\frac{N}{N-\theta}$ and $p=\frac{N(\gam-1)}{\theta}$.
    % Then, there exist $\del>0$, $M>0$ and $T>0$ such that for every
    % $\mu\in\cS'(\R^{N})$ satisfying
    % $\norm{\mu\cd B^{\ep-\theta}_{\left(\frac{Np}{N+p\ep},\infty\right),\infty}}<\del$ for some $\ep\in\left]0,\theta\right[$,
    % problem \eqref{eq} possesses a solution $u(x,t)$ on $\left]0,T\right[$ with a bound
    % $$\sup_{t\in\left]0,T\right[}
    % \norm{u(x,t)
    % \mid L^{p,\infty}(\R^{N})}\leq M.$$
    %\vspace{-10pt}
    \item\label{thm1-3} Assume
    $-\theta<s<0$.
    There exists $M>0$ such that for every
    $\mu\in B^{s,\infty}_{p,q,\infty}(\R^{N})$,
    problem \eqref{eq} possesses a solution $u$ on $\left]0,T\right[$ for some $T>0$ with a bound
    \eqref{thm-bd}.
    % Then, there exist $\del>0$, $M>0$ and $T>0$ such that for every $\mu\in B^{s}_{(p,\infty),\infty}(\R^{N})$ satisfying
    % problem \eqref{eq} possesses a solution $u(x,t)$ on $\left]0,T\right[$ with a bound
    % $\sup_{t\in\left]0,T\right[}
    % \norm{u(x,t)
    % \mid L^{p,\infty}(\R^{N})}\leq M$.
    \qed
  \end{enumerate}
\end{thm}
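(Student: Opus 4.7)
The plan is to treat all three parts by a unified contraction mapping argument for the integral operator
\begin{align*}
  \Phi[u](t) := F(t)+\int_0^t S(t-\tau)\bigl[|u|^{\gam-1}u(\cdot,\tau)\bigr]\,d\tau,
  \qquad
  F(t):=\int_0^t S(t-\tau)\mu\,d\tau,
\end{align*}
carried out in the closed ball $\{u\in X_T:\norm{u\cd X_T}\leq M\}$ of the Banach space $X_T:=L^{\infty}(0,T;L^{p,\infty}_{\mathrm{ul}}(\R^N))$.  The nonlinear estimate is common to all three parts: a Lorentz H\"older inequality yields $\norm{|u|^{\gam-1}u-|v|^{\gam-1}v\cd L^{p/\gam,\infty}_{\mathrm{ul}}}\leq C(\norm{u\cd L^{p,\infty}_{\mathrm{ul}}}+\norm{v\cd L^{p,\infty}_{\mathrm{ul}}})^{\gam-1}\norm{u-v\cd L^{p,\infty}_{\mathrm{ul}}}$ (valid since $p>\gam>1$), after which the Lorentz smoothing bound $\norm{S(\sig)f\cd L^{p,\infty}_{\mathrm{ul}}}\leq C\sig^{-N(\gam-1)/(\theta p)}\norm{f\cd L^{p/\gam,\infty}_{\mathrm{ul}}}$ and integration in $\tau$ produce a contraction factor of order $CM^{\gam-1}T^{\sig}$ with $\sig:=1-N(\gam-1)/(\theta p)\geq 0$. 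In the subcritical regime $p>N(\gam-1)/\theta$ the factor $T^{\sig}$ absorbs the nonlinear term; in the critical regime $p=N(\gam-1)/\theta$ one has $\sig=0$ and the absorption must instead come from smallness of $M$, i.e. smallness of $\del$.

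What differs between the three parts is the linear estimate for $F$. For part \eqref{thm1-1}, the frequency-block computation $\int_0^t \norm{S(s)\Del_j\mu\cd L^{p,\infty}_{\mathrm{ul}}}\,ds\leq C 2^{-\theta j}\norm{\Del_j\mu\cd L^{p,\infty}_{\mathrm{ul}}}$, which follows from the Bernstein-type decay $\norm{S(s)\Del_j\mu\cd L^{p,\infty}_{\mathrm{ul}}}\lesssim e^{-cs2^{\theta j}}\norm{\Del_j\mu\cd L^{p,\infty}_{\mathrm{ul}}}$ for $j\geq 1$, combined with $l^1$-summation in $j$ gives $\norm{F\cd X_T}\leq C\norm{\mu\cd B^{-\theta}_{(p,\infty),1}}<C\del$ uniformly in $T$, so that smallness of $\del$ (together with, in the subcritical case, smallness of $T$) closes the contraction. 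For part \eqref{thm1-3}, the exponent $s>-\theta$ provides extra decay in $t$: splitting the Littlewood--Paley decomposition of the Fourier multiplier $(1-e^{-t|\xi|^\theta})/|\xi|^\theta$ at the threshold $2^{\theta j}\sim t^{-1}$ and using $\norm{\Del_j\mu\cd L^{p,\infty}_{\mathrm{ul}}}\leq 2^{-sj}\norm{\mu\cd B^{s}_{(p,\infty),\infty}}$ yields $\norm{F(t)\cd L^{p,\infty}_{\mathrm{ul}}}\leq C t^{1+s/\theta}\norm{\mu\cd B^{s}_{(p,\infty),\infty}}$; since $1+s/\theta\in\left]0,1\right[$, the inhomogeneous term vanishes in $X_T$ as $T\to 0^+$ and no smallness of $\mu$ is required.

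The principal obstacle is part \eqref{thm1-2}. Setting $q:=Np/(N+p\ep)$, a Bernstein estimate $\norm{\Del_j\mu\cd L^{p,\infty}_{\mathrm{ul}}}\leq C 2^{j\ep}\norm{\Del_j\mu\cd L^{q,\infty}_{\mathrm{ul}}}$ places $B^{\ep-\theta}_{(q,\infty),\infty}$ at the same critical Sobolev scale as $B^{-\theta}_{(p,\infty),\infty}$, but the summation in the hypothesis is only $l^\infty$ in $j$, so the clean argument of part \eqref{thm1-1} is unavailable. The role of the $\limsup$ assumption is precisely to recover the missing summability as smallness of the high-frequency tail: for a sufficiently large threshold $J_0$ one has $2^{(\ep-\theta)j}\norm{\Del_j\mu\cd L^{q,\infty}_{\mathrm{ul}}}<\del$ for every $j\geq J_0$. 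Splitting $\mu=\mu_{\mathrm{low}}+\mu_{\mathrm{high}}$ at $j=J_0$, the low-frequency part is essentially a smooth function and contributes to $\norm{F\cd X_T}$ at most $T$ times a constant depending on $J_0$ and $\norm{\mu\cd B^{\ep-\theta}_{(q,\infty),\infty}}$, absorbed by taking $T$ small, while the high-frequency tail must be controlled uniformly in $t\in\left]0,T\right[$ through the interplay of the $\del$-smallness, the heat-kernel decay $e^{-cs2^{\theta j}}$, and the Bernstein gain $2^{j\ep}$. Making this tail estimate converge uniformly in $t$ without any $l^1$ structure in $j$ is the main technical input of the proof and the key new point beyond the initial-value framework of \cite{KozonoYamazaki, Zhanpeisov23}.
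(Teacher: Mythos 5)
Your overall framework (contraction in $X_T=L^{\infty}(0,T;L^{p,\infty}_{\mathrm{ul}})$, split into a linear estimate on $F=I[\mu]$ and a nonlinear estimate on the Duhamel term) matches the paper, and your treatments of parts (1) and (3) are sound --- indeed for part (1) your route via the frequency-localized exponential decay $\norm{S(s)\Del_j\mu\cd L^{p,\infty}_{\mathrm{ul}}}\lesssim e^{-cs2^{\theta j}}\norm{\Del_j\mu\cd L^{p,\infty}_{\mathrm{ul}}}$ plus $\ell^1$-summation is more direct than the paper's, which instead bounds $\norm{\int_0^t S(\tau)[\Psi_j]\,d\tau\cd L^1}\leq C2^{-\theta j}$ via the reiteration theorem and real interpolation of Besov spaces. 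However, there are two genuine gaps, and they share a single missing idea.

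First, your nonlinear estimate fails at the critical exponent $p=\frac{N(\gam-1)}{\theta}$. You claim that the smoothing bound $\norm{S(t-\tau)f\cd L^{p,\infty}_{\mathrm{ul}}}\leq C(t-\tau)^{-N(\gam-1)/(\theta p)}\norm{f\cd L^{p/\gam,\infty}_{\mathrm{ul}}}$ followed by ``integration in $\tau$'' yields a factor $CM^{\gam-1}T^{\sig}$ with $\sig=1-\frac{N(\gam-1)}{\theta p}\geq 0$; but when $\sig=0$ the integral $\int_0^t(t-\tau)^{-1}\,d\tau$ diverges, so no bound of the form $CM^{\gam}$ follows from this computation, and smallness of $M$ cannot rescue a divergent integral. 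The paper's Lemma \ref{lem3-1} handles this case by the Meyer--Yamazaki device: choose $\rho<p<k(\rho)$ with $\frac1p=\frac{1/2}{\rho}+\frac{1/2}{k(\rho)}$, split the time integral at $\eta(\lam)=\lam^{b}$, estimate the short-time piece in $L^{\rho,\infty}_{\mathrm{ul}}$ and the long-time piece in $L^{k(\rho),\infty}_{\mathrm{ul}}$ (each integral now converges), and recover the $L^{p,\infty}_{\mathrm{ul}}$ bound through the $K$-functional characterization $L^{p,\infty}_{\mathrm{ul}}\supset(L^{\rho,\infty}_{\mathrm{ul}},L^{k(\rho),\infty}_{\mathrm{ul}})_{1/2,\infty}$ (Corollary \ref{cor-LRI}). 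This is precisely why the paper works in Lorentz rather than Morrey spaces. Second, for part (2) you correctly isolate the high-frequency tail as the crux but then declare it ``the main technical input'' without supplying it; the same obstruction appears there, since by Sobolev embedding $B^{\ep-\theta}_{(Np/(N+p\ep),\infty),\infty}\subset B^{-\theta}_{(p,\infty),\infty}$ and a direct estimate into $L^{p,\infty}_{\mathrm{ul}}$ again produces the non-integrable weight $\tau^{-1}$. The paper's Proposition \ref{propI-1} resolves this by exactly the same two-exponent time-splitting and real interpolation. Without this interpolation argument (or an equivalent substitute) your proposal does not close either the critical nonlinear estimate or part (2).
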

Each of these three sufficient conditions on $\mu$ has its own advantages.
Note that $p={N(\gam-1)}/{\theta}$ is feasible only when the Serrin supercritical condition $\gam>\gam_{\ast}$ holds,
because we imposed $p\geq q>\gam$.
%%%%%%%%%%%%%%%%%%%%%%%%%%%%%%%%%%%%%%%%%%%%%%%%%%%%%%%%%
\begin{rmk}\label{rmk1}
  Let us consider the Serrin supercritical case $\gam>\gam_{\ast}={N}/{(N-\theta)}$ with $\theta<N$.
  From Proposition \ref{prop:2-2} and Proposition \ref{prop:SE}, we have
  \begin{align*}
    M^{\frac{N(\gam-1)}{\gam\theta}}_{{q}/{\gam}}
    \subset
    M^{\frac{N(\gam-1)}{\gam\theta}}_{{q}/{\gam},\infty}
    \subset
    B^{0,\infty}_{\frac{N(\gam-1)}{\gam\theta},\frac{q}{\gam},\infty}
    \subset
    B^{\ep-\theta,\infty}_{\frac{N(\gam-1)}{\theta+(\gam-1)\ep},\frac{Nq}{N+p\ep},\infty}
  \end{align*}
  for some $\ep>0$.
  Thus,
  by applying Theorem \ref{thm1} (\ref{thm1-2}) with $p={N(\gam-1)}/{\theta}$ and any $q\in\left]\gam,p\right[$,
  we find out that
  problem \eqref{eq} possesses a solution for $\mu\in M^{{N(\gam-1)}/{\gam\theta},\infty}_{q/\gam}$ satisfying $\norm{\mu\cd M^{{N(\gam-1)}/{\gam\theta},\infty}_{q/\gam}}<\del$.
  For example, we can take homogeneous functions $c|x|^{-\frac{\gam\theta}{\gam-1}}$ or their infinite sum
  \begin{align*}
    c\sum_{j\in\N}|x-x_{j}|^{-\frac{\gam\theta}{\gam-1}}\chi_{B(x_{j},1)}(x),\quad |x_{j}-x_{k}|>1\text{ for }j\neq k
  \end{align*}
  as inhomogeneous terms for sufficiently small $c>0$.
  This is compatible with the results of \cite{HIT2020, IKT2024}.
  %(See also Figure 1.).
  See Remark \ref{rmk4} and Proposition \ref{prop:B-1} for further examples;
  therein lies the advantage of employing Morrey-type spaces.
\end{rmk}
\begin{rmk}\label{rmk3}
  Assume $\gam>\gam_{\ast}$.
  We know that all $\mu\in L^{{N(\gam-1)}/{\gam\theta}}(\R^N)$ are acceptable as inhomogeneous terms.
  Recall that problem \eqref{eq} has the following structure:
  If a function $u(x,t)$ is a solution to problem \eqref{eq} with a inhomogeneous term $\mu(x)$,
  then a function $u_{\lam}(x,t):=\lam^{\frac{\theta}{\gam-1}}u(\lam x,\lam^{\theta}t)$ is a solution to problem \eqref{eq} with $\mu_{\lam}(x):=\lam^{\frac{\gam\theta}{\gam-1}}\mu(\lam x)$.
  Let $p=N(\gam-1)/\theta$.
  For any $\mu\in L^{{N(\gam-1)}/{\gam\theta}}(\R^N)$,
  we see that
  \begin{align*}
    \limsup_{j\to\infty}2^{(\ep-\theta)j}
    \norm{\Del_{j}\mu_{\lam}\cd M^{\frac{Np}{N+p\ep}}_{\frac{Nq}{N+p\ep},\infty}}
    &\leq
    \norm{\mu_{\lam}\cd B^{\ep-\theta,\infty}_{\frac{Np}{N+p\ep},\frac{Nq}{N+p\ep},\infty}}
    \leq
    C\norm{\mu_{\lam}\cd B^{0,\infty}_{\frac{N(\gam-1)}{\gam\theta},\frac{q}{\gam},\infty}}
    \\
    \leq
    C\norm{\mu_{\lam}\cd M^{\frac{N(\gam-1)}{\gam\theta}}_{\frac{N(\gam-1)}{\gam\theta},\infty}}
    &\leq
    C\sup_{z\in\R^{N}}
    \left(\int_{|x-z|\leq 1}
    \lam^{N}|\mu(\lam x)|^{\frac{N(\gam-1)}{\gam\theta}}\,dx
    \right)^{\frac{\gam\theta}{N(\gam-1)}}
    % \leq
    % C\norm{\mu_{\lam}\cd L^{\frac{N(\gam-1)}{\gam\theta}}_{\text{ul}}}
    \\
    &=C
    \sup_{z\in\R^{N}}
    \left(
      \int_{|x-z|\leq\lam}
      |\mu(x)|^{\frac{N(\gam-1)}{\gam\theta}}
      \,dx
    \right)^{\frac{\gam\theta}{N(\gam-1)}}
  \end{align*}
  holds for all $\lam>0$.
  Then,
  by absolute continuity of $\left|\mu\right|^{\frac{N(\gam-1)}{\gam\theta}}$,
  the last term tends to zero if we send $\lam\to 0$.
  Thus we can apply Remark \ref{rmk1} to $\mu_{\lam}$, $0<\lam\ll 1$.
\end{rmk}
%%%%%%%%%%%%%%%%%%%%%%%%%%%%%%%%%%%%%%%%%%%%%%%%%%%%%%%%%
\begin{rmk}\label{rmk4}
  Assume $\theta>1$ and $\gam>\gam_{\ast}$.
  Set $\del:=\frac{1+(\theta-1)\gam}{N(\gam-1)}$
  and take any $q\in\left]\gam,{N(\gam-1)}/{\theta}\right[$.
  These conditions give $1<\frac{\theta q}{1+(\theta-1)\gam}<1/\del$.
  Then \cite[Proposition 1.3]{KozonoYamazaki}, \cite[Lemma 1.4]{KozonoYamazaki} and Proposition \ref{prop:2-2} imply
  $$\mu(x):=\prod_{j=1}^{N}\left(x_{j}\right)_{+}^{-\del}\in M^{1/\del}_{\theta q/(1+(\theta-1)\gam)}\subset B^{0,\infty}_{\frac{1}{\del},\frac{\theta q}{1+(\theta-1)\gam},\infty}$$
  and thus we see that
  $$\frac{1}{-\del}\frac{\pl\mu}{\pl x_{1}}=
  \mathrm{f.p.}\,(x_{1})_{+}^{-\del-1}\prod_{j=2}^{N}(x_{j})_{+}^{-\del}\in
  B^{-1,\infty}_{\frac{1}{\del},\frac{\theta q}{1+(\theta-1)\gam},\infty}$$
  holds.
  Since the space $B^{-1,\infty}_{{1}/{\del},{\theta q}/{\left(1+(\theta-1)\gam\right)},\infty}$ satisfies the assumptions of Theorem \ref{thm1} (\ref{thm1-2})
  with $\ep=\theta-1$ and $p=N(\gam-1)/\theta$,
  it turns out that the distribution $c(\pl\mu/\pl x_{1})$
  can be taken as the inhomogeneous term if $0<c\ll 1$.
  We also note that $\mu$ does not belong to Lorentz spaces if $N\geq 2$.
\end{rmk}
%%%%%%%%%%%%%%%%%%%%%%%%%%%%%%%%%%%%%%%%%%%%%%%%%%%%%%%%%
\begin{rmk}\label{rmk2}
  In the Serrin subcritical case $\gam<\gam_{\ast}$, we can pick $p_{0}$ as $\gam<p_{0}<\gam_{\ast}$.
  Then by Proposition \ref{prop:SE}, we have
  \begin{align*}
    L^{1}\subset B^{0,\infty}_{1}
    \subset B^{N/p_{0}-N,\infty}_{p_{0}}
    \subset B^{N/p_{0}-N,\infty}_{p_{0},p_{0},\infty}
  \end{align*}
  and the space appearing in the last satisfies the assumptions of Theorem \ref{thm1} (\ref{thm1-3}).
  For instance, we know that the Dirac delta measure $\del(x)$ belongs to $B^{0,\infty}_{1}\subset B^{N/p_{0}-N,\infty}_{p_{0}}
  \subset B^{N/p_{0}-N,\infty}_{p_{0},p_{0},\infty}$.
  Furthermore, if $\theta>m$ with $m\in\Z_{\geq 1}$ and if $\gam>1$ satisfies
  $\gam<{N}/{(N+m-\theta)}$,
  we can take $\pl^{\al}\del(x)$ as an inhomogeneous term of problem \eqref{eq} since we can take $p_{1}\in\left]\gam,{N}/{(N+m-\theta)}\right[$ and we have
  \begin{align*}
    \pl^{\al}\del(x)\in
    B^{N/p_{1}-N-m,\infty}_{p_{1}}
    \subset
    B^{N/p_{1}-N-m,\infty}_{p_{1},p_{1},\infty}
    \quad\text{for }
    i\in\left\{1,...,N\right\}
    \,\,\text{and for }|\al|\leq m.
  \end{align*}
  % Furthermore, since we have
  % \begin{align*}
  %   \pl_{x_{i}}\del(x)\in
  %   B^{N/p_{0}-N-1}_{p_{0},\infty}
  %   \subset
  %   B^{N/p_{0}-N-1}_{(p_{0},\infty),\infty}
  %   \quad\text{for}\quad
  %   i\in\left\{1,...,N\right\},
  % \end{align*}
  % we can take $\pl_{x_{i}}\del(x)$ as a forcing term of problem \eqref{eq} for $\gam$ satisfying
  % $\gam<\frac{N}{N+1-\theta}$.
  This example reflects a benefit of employing Besov-type spaces.
  %We also note that employing Besov--Morrey spaces (see Definition 1.2 in \cite{Zhanpeisov23}) can broaden the class of acceptable forcing terms in this case ($\gam<N/(N-\theta)$).
\end{rmk}
In this paper, we do not treat in detail the Serrin critical case $\gam=\gam_{\ast}$. This case requires some special function spaces (see Theorem 1.3 in \cite{IKT2024} for details).

The next theorem states a necessary condition on inhomogeneous terms $\mu$
for the existence of a solution $u=u(x,t)$ belonging to $L^{\infty}(0,T;M^{p}_{q,\infty}(\R^{N}))$.
\begin{thm}\label{thm2}
  Let $T>0$, $p\geq q>\gam$ and $p\geq{N(\gam-1)}/{\theta}$.
  Assume that a function $u=u(x,t)\in L^{\infty}\left(0,T;M^{p}_{q,\infty}(\R^{N})\right)$ satisfies the integral equation \eqref{inteq} for a.e.-$(x,t)\in\R^{N}\times\left]0,T\right[$.
  Then the tempered distribution $\mu$ appearing in \eqref{inteq}
  must be an element of $B^{-\theta,\infty}_{p,q,\infty}(\R^{N})$.
  \qed
\end{thm}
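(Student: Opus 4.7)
The plan is to adapt the strategy of Kozono--Okada--Shimizu \cite{KOS2020, KOS2021}: isolate the heat-forced linear part of \eqref{inteq}, bound it in $L^{\infty}(0,T;L^{p,\infty}_{\mathrm{ul}})$ uniformly in time, and then invert the semigroup dyadically to recover the Besov regularity of $\mu$. Introducing
\begin{align*}
V(x,t):=\int_{0}^{t}S(t-\tau)\mu(x)\,d\tau,\qquad w(x,t):=\int_{0}^{t}S(t-\tau)\bigl[|u|^{\gam-1}u(\cdot,\tau)\bigr](x)\,d\tau,
\end{align*}
the integral equation \eqref{inteq} reads $V=u-w$. Since $u\in L^{\infty}(0,T;L^{p,\infty}_{\mathrm{ul}}(\R^{N}))$ by hypothesis, it suffices to control $w$ in the same space. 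For this I would combine H\"older in uniformly local Lorentz spaces, namely $\norm{|u|^{\gam-1}u\cd L^{p/\gam,\infty}_{\mathrm{ul}}}\leq C\norm{u\cd L^{p,\infty}_{\mathrm{ul}}}^{\gam}$, with the smoothing estimate $\norm{S(t-\tau)f\cd L^{p,\infty}_{\mathrm{ul}}}\lesssim(t-\tau)^{-N(\gam-1)/(\theta p)}\norm{f\cd L^{p/\gam,\infty}_{\mathrm{ul}}}$. The hypothesis $p\geq N(\gam-1)/\theta$ makes the resulting time integral converge on $\left]0,T\right[$ (an O'Neil-type Lorentz convolution in time handles the endpoint $p=N(\gam-1)/\theta$), so
\begin{align*}
\Lam\;:=\;\sup_{t\in\left]0,T\right[}\norm{V(\cdot,t)\cd L^{p,\infty}_{\mathrm{ul}}(\R^{N})}\;<\;\infty.
\end{align*}

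The heavier step is to show that this uniform bound forces $\mu\in B^{-\theta}_{(p,\infty),\infty}(\R^{N})$, which is the content of the announced Proposition \ref{prop0}. Since $\Del_{j}$ commutes with $S(\tau)$, one has $\Del_{j}V(\cdot,t)=\int_{0}^{t}S(\tau)\Del_{j}\mu\,d\tau$, and on the Fourier side the symbol carrying $\cF\mu$ to $\cF[\Del_{j}V(t)]$ is $\bigl(1-e^{-t|\xi|^{\theta}}\bigr)|\xi|^{-\theta}\phi_{j}(\xi)$. Choosing $t_{j}:=\min(2^{-j\theta},T/2)$, this symbol has magnitude $\sim 2^{-j\theta}$ on $\operatorname{supp}\phi_{j}$, so I would invert it via a multiplier $M_{j}$ with symbol $|\xi|^{\theta}\bigl(1-e^{-t_{j}|\xi|^{\theta}}\bigr)^{-1}\widetilde{\phi}_{j}(\xi)$, where $\widetilde{\phi}_{j}\equiv 1$ on $\operatorname{supp}\phi_{j}$. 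A dilation by $2^{j}$ shows that $\cF^{-1}M_{j}(x)=2^{j(\theta+N)}G(2^{j}x)$ for a Schwartz function $G$, hence $\norm{\cF^{-1}M_{j}\cd L^{1}}\lesssim 2^{j\theta}$. A convolution estimate on $L^{p,\infty}_{\mathrm{ul}}$ then yields $\norm{\Del_{j}\mu\cd L^{p,\infty}_{\mathrm{ul}}}\leq C\,2^{j\theta}\Lam$ for every $j\in\N$, i.e.\ $\norm{\mu\cd B^{-\theta}_{(p,\infty),\infty}}\leq C\Lam<\infty$.

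The main obstacle is the multiplier/convolution estimate on the uniformly local Lorentz space: the sup-over-translations structure of $\norm{\,\cdot\,\cd L^{p,\infty}_{\mathrm{ul}}}$ defeats a direct appeal to Young's inequality or to the Mikhlin theorem. I would decompose $\cF^{-1}M_{j}$ into a part supported in a fixed large ball plus a rapidly-decaying tail, apply the ordinary weak-$L^{p}$ convolution bound to the bulk on each translate $B(z,1)$, and sum the tail contributions via the Schwartz decay of $G$. The same localization also covers the low-frequency block $j=0$, where $t_{0}=T/2$ and the symbol remains uniformly bounded below on $\operatorname{supp}\phi_{0}$, so an analogous inversion of $M_{0}$ gives the bound for $\norm{\Del_{0}\mu\cd L^{p,\infty}_{\mathrm{ul}}}$.
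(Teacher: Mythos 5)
Your overall architecture coincides with the paper's: write $u=I[\mu]+J[u]$, show $J[u]\in L^{\infty}(0,T;L^{p,\infty}_{\mathrm{ul}})$ by the H\"older and smoothing estimates (with an interpolation or O'Neil-type device at the endpoint $p=N(\gam-1)/\theta$), conclude that $\int_{0}^{t}S(t-\tau)\mu\,d\tau\in L^{\infty}(0,T;L^{p,\infty}_{\mathrm{ul}})$, and then invert the symbol $(1-e^{-t|\xi|^{\theta}})|\xi|^{-\theta}$ frequency block by frequency block. Your inversion differs mildly from Proposition \ref{prop0}: you invert at the single time $t_{j}=\min(2^{-j\theta},T/2)$, whereas the paper averages over $t\in[0,T2^{-\theta j}]$ via the factor $C_{T/2^{\theta j}}(\xi)$; both give $\norm{\cF^{-1}M_{j}\cd L^{1}}\leq C2^{j\theta}$ by the same dilation, and the single-time version is if anything cleaner. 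Also, the convolution obstacle you flag is already resolved by Proposition \ref{prop:YO-loc} (Young's inequality on $L^{p,\infty}_{\mathrm{ul}}$), so your bulk-plus-tail decomposition, while workable, is unnecessary.

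The genuine gap is the low-frequency block $j=0$ when $\theta<2$. Your dilation argument produces a Schwartz kernel only because $\widetilde{\phi}_{j}$ ($j\geq1$) is supported in an annulus away from the origin; for $j=0$ the symbol $M_{0}(\xi)=h(|\xi|^{\theta})\widetilde{\phi}_{(0)}(\xi)$ with $h(s)=s/(1-e^{-t_{0}s})$ is indeed bounded and bounded below near $\xi=0$, but it is \emph{not} smooth there, since $h(|\xi|^{\theta})=t_{0}^{-1}+\tfrac{1}{2}|\xi|^{\theta}+O(|\xi|^{2\theta})$ and $\xi\mapsto|\xi|^{\theta}$ is not smooth at the origin for $\theta<2$. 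Hence ``an analogous inversion'' does not follow: one must still prove $\cF^{-1}M_{0}\in L^{1}$, and boundedness of the symbol is far from sufficient for that. This is precisely the point the paper flags in Remark \ref{Csharp-deponT}(1) and settles in Appendix \ref{sec-A} by an integration-by-parts argument with a cutoff at scale $|x|^{-1}$, yielding the decay $\left|\cF^{-1}[\Phi_{(0)}C_{T}](x)\right|\leq C(1+|x|)^{-N-\min\{1;\theta\}}$. Your proof needs the analogous estimate for $h(|\xi|^{\theta})\widetilde{\phi}_{(0)}(\xi)$; without it the case $\theta<2$ is not closed (for $\theta=2$ the symbol is smooth and your argument is complete).
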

\begin{rmk}
We claim nothing about existence of solutions to problem \eqref{eq} for $\mu\in\cS'(\R^{N})$ satisfying
\begin{align*}
  \mu\in B^{-\theta,\infty}_{p,q,\infty}\setminus
  \left\{
  B^{-\theta,1}_{p,q,\infty}
  \cup
  \left(
    \bigcup_{0<\ep<\theta}
    B^{\ep-\theta,\infty}_{\frac{Np}{N+p\ep},\frac{Nq}{N+p\ep},\infty}
  \right)
  \right\}.
\end{align*}
\end{rmk}
By similarly carrying the arguments of the proof of Theorem \ref{thm2} with the aid of Theorem 2 in \cite{BST} and of the Hardy--Littlewood--Sobolev inequality (see also \cite{KOS2021}), we can obtain a necessary condition on the initial data for the local-in-time solvability of the following Cauchy problem
\begin{align}\label{eq:initial}
  \begin{cases}
    \pl_{t}u+(-\Del)^{\theta/2}u=|u|^{\gam -1}u, & \quad x\in\R^{N},\: t>0,\\
    u(x,0)=\mu(x)\in\cS'(\R^{N}), & \quad x\in\R^{N},
  \end{cases}
\end{align}
where $\gam>1$, $\theta>0$.

\begin{cor}
  Let $p$ and $q$ satisfy the conditions
  \begin{align*}
    \frac{N(\gam-1)}{\theta}<p<\infty,\quad
    \gam\leq p,\quad
    \gam<q<\infty
    \quad\text{and}\quad
    \frac{\theta}{\gam-1}-\frac{\theta}{q}-\frac{N}{p}=0.
  \end{align*}
  Assume that a function $u(x,t)\in L^{q}(0,T;L^{p}(\R^{N}))$ satisfies the integral equation
  \begin{align}\label{eq:thm-01}
    u(x,t)
    =
    S({t})\mu(x)
    +
    \int_{0}^{t}S(t-\tau)
    \left[
      |u|^{\gam-1}u(\cdot,\tau)
    \right](x)\,d\tau
  \end{align}
  for a.e.-$(x,t)\in\R^{N}\times\left]0,T\right[$ for some $T>0$.
  Then the tempered distribution $\mu(x)$ appearing in \eqref{eq:thm-01} must be an element of inhomogeneous Besov space $B^{-\theta/q,q}_{p}(\R^{N})=B^{N/p-\theta/(\gam-1),q}_{Nq(\gam-1)/\theta(q-\gam+1)}(\R^{N})$.
  \qed
\end{cor}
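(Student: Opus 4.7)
Following the strategy used to prove Theorem \ref{thm2}, the idea is to isolate $S(t)\mu$ from the integral equation \eqref{eq:thm-01} and then invoke the characterization of the inhomogeneous Besov space $B^{-\theta/q}_{p,q}$ via the fractional heat semigroup (Theorem 2 of \cite{BST}). Rearranging \eqref{eq:thm-01} gives
\begin{align*}
  S(t)\mu = u(t) - I(u)(t), \qquad I(u)(t) := \int_{0}^{t} S(t-\tau)\bigl[|u|^{\gamma-1}u(\cdot,\tau)\bigr]\,d\tau.
\end{align*}
The cited characterization says that, for $s<0$, membership $\mu\in B^{s}_{p,q}$ is equivalent (up to a low-frequency term controlled by $\|S(T_{0})\mu\|_{L^{p}}$ for a fixed $T_{0}\in\left]0,T\right[$) to the finiteness of $\int_{0}^{T}(t^{-s/\theta}\|S(t)\mu\|_{L^{p}})^{q}\,dt/t$. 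Specialising to $s=-\theta/q$ collapses the weight to $t^{0}$, so the target reduces to $\int_{0}^{T}\|S(t)\mu\|_{L^{p}}^{q}\,dt<\infty$.

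The contribution of $u(t)$ is immediate from the hypothesis $u\in L^{q}(0,T;L^{p})$. For the Duhamel piece I would combine the $L^{p/\gamma}\to L^{p}$ smoothing of the fractional heat semigroup,
\begin{align*}
  \|S(t-\tau)g\|_{L^{p}}\leq C(t-\tau)^{-\alpha}\|g\|_{L^{p/\gamma}}, \qquad \alpha:=\frac{N(\gamma-1)}{\theta p},
\end{align*}
with the elementary identity $\||u|^{\gamma-1}u(\tau)\|_{L^{p/\gamma}}=\|u(\tau)\|_{L^{p}}^{\gamma}$ to obtain $\|I(u)(t)\|_{L^{p}}\leq C\int_{0}^{t}(t-\tau)^{-\alpha}\|u(\tau)\|_{L^{p}}^{\gamma}\,d\tau$. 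The scaling relation $\theta/(\gamma-1)-\theta/q-N/p=0$ is tailored so that $\alpha=1-(\gamma-1)/q$, while the hypotheses $p>N(\gamma-1)/\theta$ and $\gamma<q<\infty$ guarantee $\alpha\in\left]0,1\right[$ and $q/\gamma>1$. The Hardy--Littlewood--Sobolev inequality on the half-line then maps $L^{q/\gamma}$ into $L^{q}$ in the time variable with exactly these exponents, yielding $\|I(u)\|_{L^{q}(0,T;L^{p})}\leq C\|u\|_{L^{q}(0,T;L^{p})}^{\gamma}<\infty$. Adding the two estimates produces the desired integrability of $\|S(t)\mu\|_{L^{p}}^{q}$.

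The low-frequency hypothesis in the Besov characterization is verified by evaluating the decomposition at a single time $T_{0}\in\left]0,T\right[$: both $u(T_{0})$ and $I(u)(T_{0})$ belong to $L^{p}(\R^{N})$ (the latter because $\alpha<1$ makes the Duhamel singularity integrable and HLS provides the pointwise a.e.\ control), so $S(T_{0})\mu\in L^{p}$. Invoking the characterization from \cite{BST} then yields $\mu\in B^{-\theta/q}_{p,q}(\R^{N})$; the equivalent form $B^{N/p-\theta/(\gamma-1)}_{p,\theta p(\gamma-1)/(\theta p-N\gamma+N)}$ is a direct algebraic rewriting of the exponents using the scaling identity. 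The only genuinely delicate step is lining up the HLS exponents, which is dictated precisely by the scaling hypothesis on $p$, $q$, $\gamma$, $\theta$, $N$; once this matching is secured, the remainder is a routine adaptation of the proof of Theorem \ref{thm2} to the time-Lebesgue setting.
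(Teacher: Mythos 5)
Your proposal is correct and follows essentially the same route the paper indicates for this corollary: isolate $S(t)\mu=u-I(u)$ from \eqref{eq:thm-01}, bound the Duhamel term in $L^{q}(0,T;L^{p})$ via the $L^{p/\gamma}\to L^{p}$ smoothing estimate combined with the Hardy--Littlewood--Sobolev inequality in time (the scaling identity giving exactly $\alpha=1-(\gamma-1)/q$), and conclude with the semigroup characterization of $B^{-\theta/q}_{p,q}$ from Theorem 2 of \cite{BST}. The exponent bookkeeping and the treatment of the low-frequency term via $S(T_{0})\mu\in L^{p}$ are both in order.
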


The rest of this paper proceeds as follows.
We divide Section \ref{sec2} into five subsections.
The first three are devoted to establishing
basic properties, including real interpolation for Lorentz--Morrey spaces and Besov spaces.
We confirm decay estimates of operators $S(t)$ in the next subsection.
At the end of Section \ref{sec2},
we make an estimate of the BLM norm of the inhomogeneous term.
When $\theta\neq2$, it needs additional care and we postpone this task to Appendix \ref{sec-A}.
In Section \ref{sec3}, we prove Theorem \ref{thm1} by applying the contraction mapping theorem and Theorem \ref{thm2} by applying Proposition \ref{prop0}.
We make some comments on Morrey spaces in Appendix B.
%%%%%%%%%%%%%%%%%%%%%%%%%%%%%%%%%%%%%%%%%%%%%%%%%%%%%%%%%%%%%%%%%%%%%%%%
%%%%%%%%%%%%%%%%%%%%%%%%%%%%%%%%%%%%%%%%%%%%%%%%%%%%%%%%%%%%%%%%%%%%%%%%
\section{Preliminaries}\label{sec2}
%%%%%%%%%%%%%%%%%%%%%%%%%%%%%%%%%%%%%%%%%%%%%%%%%%%%%%%%%%%%%%%%%%%%%%%%
%%%%%%%%%%%%%%%%%%%%%%%%%%%%%%%%%%%%%%%%%%%%%%%%%%%%%%%%%%%%%%%%%%%%%%%%
In this section, we gather some preliminary properties of Lorentz spaces $L^{p,q}$, of Lorentz--Morrey spaces $M^{p}_{q,r}$ and of BLM spaces $B^{s,r}_{p,q,\kap}$.
We also confirm decay estimates of operators $S(t)$ on these spaces.
In the last subsection, we show that BLM norms $\norm{f\cd B^{-\theta,\infty}_{p,q,\kap}}$ can be estimated by a space-time norm of the Duhamel integral of $f$.
\subsection{Basic properties of Lorentz--Morrey spaces}
%%%%%%%%%%%%%%%%%%%%%%%%%%%%%%%%%%%%%%%%%%%%%%%%%%%%%%%%%%%%%%%
%%%%%%%%%%%%%%%%%%%%%%%%%%%%%%%%%%%%%%%%%%%%%%%%%%%%%%%%%%%%%%%
We begin with recalling basic properties of Lorentz spaces.
\begin{lem}\label{lem:Lorentz}
  %Fix $\cL\in\left\{L,L_{\mathrm{ul}}\right\}$ and $p\in\left]0,\infty\right[$.
  Fix $p\in\left]0,\infty\right[$.
  The following claims hold.
  \begin{enumerate}[{\rm (1)}]
    \item\label{lem:Lorentz-1}
    Assume that $p_{0}$ and $p_{1}\in\left]0,\infty\right[$ satisfy
    $p^{-1}=p_{0}^{-1}+p_{1}^{-1}$.
    Then there exists $C>0$ depending on $p_{0}$, $p_{1}$ such that the inequality
    \begin{align*}
      \norm{fg\cd L^{p,\infty}}
      \leq C\norm{f\cd L^{p_{0},\infty}}
      \norm{g\cd L^{p_{1},\infty}}
    \end{align*}
    holds for all $f\in L^{p_{0},\infty}$ and $g\in L^{p_{1},\infty}$.
    \item\label{lem:Lorentz-2}
    Fix $r\in\left]0,\infty\right[$.
    Then the identity
    \begin{align*}
      \norm{|f|^{r}\cd L^{p,\infty}}
      =\norm{f\cd L^{pr,\infty}}^{r}
    \end{align*}
    holds for all $f\in L^{pr,\infty}$.
    \item\label{lem:Lorentz-3} Assume $|g(x)|\leq |f(x)|$ for a.e.-$x$.
    Then we have
    \begin{align*}
      \norm{g\cd L^{p,\infty}}\leq\norm{f\cd L^{p,\infty}}.
      % ,\quad
      % \norm{g\cd L^{p,\infty}_{\mathrm{ul}}}
      % \leq\norm{f\cd L^{p,\infty}_{\mathrm{ul}}}.
    \end{align*}
    \item\label{lem:Lorentz-4} We have the following scaling property.
    For all $\lam>0$ and $f\in L^{p,q}$, we have
    \begin{align*}
      \norm{f_{\lam}\cd L^{p,q}}
      =
      \lam^{\frac{-N}{p}}\norm{f\cd L^{p,q}},
    \end{align*}
    where $f_{\lam}(x):=f(\lam x)$ and $q\in\left]0,\infty\right]$.
  \end{enumerate}
\end{lem}
\begin{proof}
  (\ref{lem:Lorentz-1}) See Exercise 1.1.15 in \cite{GrafakosCBook}. Others are easily confirmed.
\end{proof}
Lemma \ref{lem:Lorentz} leads the corresponding properties of Lorentz--Morrey spaces.
\begin{lem}\label{lem:LM}
  Fix $0<q\leq p<\infty$.
  The following claims hold.
  \begin{enumerate}[{\rm (1)}]
    \item\label{lem:LM-1}
    Assume that $p_{0}$, $p_{1}$, $q_{0}$ and $q_{1}\in\left]0,\infty\right[$ satisfy
    $p_{0}>q_{0}$, $p_{1}>q_{1}$ and $\left(p^{-1},q^{-1}\right)=\left(p_{0}^{-1}+p_{1}^{-1},q_{0}^{-1}+q_{1}^{-1}\right)$.
    Then there exists $C>0$ depending on $p_{0}$, $p_{1}$ $q_{0}$ and $q_{1}$ such that the inequality
    \begin{align*}
      \norm{fg\cd M^{p}_{q,\infty}}
      \leq C\norm{f\cd M^{p_{0}}_{q_{0},\infty}}
      \norm{g\cd M^{p_{1}}_{q_{1},\infty}}
    \end{align*}
    holds for all $f\in M^{p_{0}}_{q_{0},\infty}$ and $g\in M^{p_{1}}_{q_{1},\infty}$.
    \item\label{lem:LM-2}
    Fix $r\in\left]0,\infty\right[$.
    Then the identity
    \begin{align*}
      \norm{|f|^{r}\cd M^{p}_{q,\infty}}
      =\norm{f\cd M^{pr}_{qr,\infty}}^{r}
    \end{align*}
    holds for all $f\in M^{pr}_{qr,\infty}$.
    \item\label{lem:LM-3} Assume $|g(x)|\leq |f(x)|$ for a.e.-$x$. Then we have
    \begin{align*}
      \norm{g\cd M^{p}_{q,\infty}}\leq\norm{f\cd M^{p}_{q,\infty}}.
      % ,\quad
      % \norm{g\cd L^{p,\infty}_{\mathrm{ul}}}
      % \leq\norm{f\cd L^{p,\infty}_{\mathrm{ul}}}.
    \end{align*}
    % \item We have the following scaling property.
    % For all $\lam>0$ and $f\in M^{p}_{q}$, we have
    % \begin{align*}
    %   \norm{f_{\lam}\cd M^{p}_{q}}
    %   =
    %   \lam^{\frac{-N}{p}}\norm{f\cd M^{p}_{q}}
    % \end{align*}
    % , where $f_{\lam}(x):=f(\lam x)$.
  \end{enumerate}
\end{lem}
We prepare the Young inequality for Lorentz--Morrey spaces.
\begin{prop}\label{prop:YO-LM}
  Let $1<q\leq p<\infty.$
  There exists a positive constant $C$ depending on $q$ such that the inequality
  \begin{align*}
    \norm{g\ast f\cd M^{p}_{q,\infty}(\R^{N})}
    \leq C
    \norm{g\cd L^{1}(\R^{N})}
    \norm{f\cd M^{p}_{q,\infty}(\R^{N})}
  \end{align*}
  holds for all $g\in L^{1}(\R^{N})$ and $f\in M^{p}_{q,\infty}(\R^{N})$.
\end{prop}

% We prepare the Young inequality for uniformly local Lorentz spaces.
% \begin{prop}\label{prop:YO-loc}
%   Let $p\in\left]1,\infty\right[$.
%   There exists a positive constant $C$ depending on $p$ such that the inequality
%   \begin{align*}
%     \norm{g\ast f\mid L^{p,\infty}_{\text{ul}}(\R^{N})}
%     \leq C
%     \norm{g\mid L^{1}(\R^{N})}
%     \norm{f\mid L^{p,\infty}_{\text{ul}}(\R^{N})}
%   \end{align*}
%   holds for all $g\in L^{1}(\R^{N})$ and $f\in L^{p,\infty}_{\text{ul}}(\R^{N})$.
% \end{prop}
%It is worth mentioning that Proposition \ref{prop:YO-LM} does not directly follow from its counterpart of Lorentz spaces.
Proposition \ref{prop:YO-LM} does not seem to directly follow from its counterpart of Lorentz spaces.
We provide a proof here for completeness;
the following argument is a modification of that on pp.~24--25 in \cite{GrafakosCBook}.
% We shall modify the proof of it (see pp.~24--25 in \cite{GrafakosCBook}) and provide the full proof here for completeness.

% Although the following proof is just a modification of the proof of Proposition \ref{prop:YO} (see pp. 24--25 in \cite{GrafakosCBook}), it is worth mentioning that Propostion \ref{prop:YO-loc} does not follow from Proposition \ref{prop:YO} directly.
\begin{proof}[Proof of Proposition \ref{prop:YO-LM}]
  Fix any $R\in\left]0,1\right]$, $z\in\R^{N}$ and $\lam>0$.
  Take arbitrary $f\in M^{p}_{q,\infty}(\R^{N})$ and $g\in L^{1}(\R^{N})$.
  Set $M:=\lam/2\norm{g\cd L^{1}}$.
  %Set $M:=\frac{\lam}{2\norm{g\cd L^{1}}}$.
  We decompose $f$ as
  \begin{align*}
    f(x)=
    f(x)\chi_{\left\{|f(x)|\leq M\right\}}(x)
    +
    f(x)\chi_{\left\{|f(x)|>M\right\}}(x)
    =:
    f_{0}(x)+f_{1}(x).
  \end{align*}
  Because
  $\left|f_{0}\ast g(x)\right|\leq M\norm{g\cd L^{1}}=\lam/2$ holds for all $x\in\R^{N}$,
  we have
  \begin{align*}
    \al_{(f\ast g)\chi_{B(z,R)}}(\lam)
    \leq
    \al_{(f_{0}\ast g)\chi_{B(z,R)}}\left(\frac{\lam}{2}\right)
    +
    \al_{(f_{1}\ast g)\chi_{B(z,R)}}\left(\frac{\lam}{2}\right)
    =
    \al_{(f_{1}\ast g)\chi_{B(z,R)}}\left(\frac{\lam}{2}\right).
  \end{align*}
  By Chebyshev's inequality, we get
  \begin{align}\label{eq:yoloc1}
    \al_{(f\ast g)\chi_{B(z,R)}}(\lam)
    &\leq
    \al_{(f_{1}\ast g)\chi_{B(z,R)}}\left(\frac{\lam}{2}\right)
    \leq
    \frac{2}{\lam}
    \norm{f_{1}\ast g\cd L^{1}(B(z,R))}
    \notag
    \\
    &\leq
    \frac{2}{\lam}
    \int_{B(z,R)}\int_{\R^N}
    \left|f_{1}(x-y)\right||g(y)|
    \,dy\,dx
    \notag
    \\
    &=
    \frac{2}{\lam}
    \int_{\R^N}|g(y)|
    \int_{B(z-y,R)}
    \left|f_{1}(x)\right|
    \,dx\,dy.
  \end{align}
  Here, for fixed $y\in\R^{N}$, we carry out the following calculation.
  \begin{align}\label{eq:yoloc2}
    \int_{B(z-y,R)}
    \left|f_{1}(x)\right|
    \,dx
    &=
    \int_{B(z-y,R)}
    \int_{0}^{\infty}
    \chi_{\left\{0<k<|f_{1}(x)|\right\}}(k)
    \,dk\,dx
    =
    \int_{0}^{\infty}
    \al_{f_{1}\chi_{B(z-y,R)}}(k)
    \,dk
    \notag
    \\
    % &=
    % \int_{0}^{M}
    % \al_{f_{1}\chi_{B(z+y,1)}}(\lam)
    % \,d\lam
    % +
    % \int_{M}^{\infty}
    % \al_{f_{1}\chi_{B(z+y,1)}}(\lam)
    % \,d\lam
    % \notag
    % \\
    &=
    \int_{0}^{M}
    \al_{f\chi_{B(z-y,R)}}(M)
    \,dk
    +
    \int_{M}^{\infty}
    \al_{f\chi_{B(z-y,R)}}(k)
    k^{q}k^{-q}
    \,dk
    \notag
    \\
    &\leq
    M^{1-q}
    \norm{f\chi_{B(z-y,R)}\cd L^{q,\infty}}^{q}
    +
    \int_{M}^{\infty}
    \frac{dk}{k^{q}}
    \norm{f\chi_{B(z-y,R)}\cd L^{q,\infty}}^{q}
    \notag
    \\
    &\leq
    \frac{q}{q-1}M^{1-q}R^{N-\frac{Nq}{p}}
    \norm{f\cd M^{p}_{q,\infty}}^{q}.
    % =
    % \frac{p}{p-1}\left(\frac{\lam}{2\norm{g\cd L^{1}}}\right)^{1-p}
    % \norm{f\cd L^{p,\infty}_{\mathrm{ul}}}^{p}
  \end{align}
  We plug \eqref{eq:yoloc2} into \eqref{eq:yoloc1} and recall $M=\lam/2\norm{g\cd L^{1}}$ to see that
  \begin{align*}
    R^{\frac{N}{p}-\frac{N}{q}}\lam
    \al_{(f\ast g)\chi_{B(z,R)}}(\lam)^{1/q}
    % &\leq
    % \lam
    % \left[
    %   \frac{2q}{\lam\left(q-1\right)}
    %   \left(\frac{\lam}{2\norm{g\cd L^{1}}}\right)^{1-q}
    %   \int_{\R^N}|g(y)|
    %   \norm{f\cd M^{p}_{q,\infty}}^{q}
    %   \,dy
    % \right]^{1/q}
    % \\
    % &=
    % \frac{2}{\lam}
    % \int_{\R^N}|g(y)|
    % \,dy\,
    % \frac{p}{p-1}\left(\frac{\lam}{2\norm{g\cd L^{1}}}\right)^{1-p}
    % \norm{f\cd L^{p,\infty}_{\mathrm{ul}}}^{p}
    % \\
    % &=
    % \left(\frac{2}{\lam}\right)^{p}
    % \frac{p}{p-1}
    % \norm{g\cd L^{1}}^{p}
    % \norm{f\cd L^{p,\infty}_{\mathrm{ul}}}^{p}
    % \\
    % \therefore\quad
    % %\left[\lam\al_{(f\ast g)\chi_{B(z,1)}}(\lam)^{1/p}\right]^{p}
    % \lam\al_{(f\ast g)\chi_{B(z,1)}}(\lam)^{1/p}
    &\leq
    2\left(
      \frac{q}{q-1}
    \right)^{1/q}
    \norm{g\cd L^{1}}
    \norm{f\cd M^{p}_{q,\infty}}
  \end{align*}
  holds for all $R\in\left]0,1\right]$, $z\in\R^{N}$ and $\lam>0$.
  Taking the sup in $\lam>0$ yields
  \begin{align*}
    R^{\frac{N}{p}-\frac{N}{q}}
    \norm{(f\ast g)\chi_{B(z,R)}\cd L^{q,\infty}}
    &\leq
    2\left(
      \frac{q}{q-1}
    \right)^{1/q}
    \norm{g\cd L^{1}}
    \norm{f\cd M^{p}_{q,\infty}}
  \end{align*}
  for all $(z,R)\in\R^{N}\times\left]0,1\right]$, and thus we arrive at the claim.
\end{proof}
%%%%%%%%%%%%%%%%%%%%%%%%%%%%%%%%%%%%%%%%%%%%%%%%%%%%%%%%%
\subsection{Real interpolation of Lorentz--Morrey spaces}
Next we introduce some notations for real interpolation.
We can pick the following from \cite{BLbook}.
%%%%%%%%%%%%%%%%%%%%%%%%%%%%%%%%%%%%%%%%%%%%%%%%%%%%%%%%%
\begin{defn}\label{def2-1}
  Let $X_{0}$ and $X_{1}$ be quasi-normed spaces and suppose that there exists a Hausdorff topological vector space $\fX$ such that $X_{0}$ and $X_{1}$ are subspaces of $\fX$.
  For $f\in X_{0}+X_{1}$ and $\lam>0$, we put
  \begin{align*}
    K(f, \lam; X_{0}, X_{1}):=
    \inf_{\substack{f=f_{0}+f_{1}\\f_{0}\in X_{0},\,f_{1}\in X_{1}}}
    \big\{
    \norm{f_{0}\cd X_{0}}
    +
    \lam\norm{f_{1}\cd X_{1}}
    \big\}.
  \end{align*}
  Then for $\kap\in\left]0,1\right[$ and $q\in\left]0,\infty\right]$, we define the space $\left(X_{0},X_{1}\right)_{\kap,q}$ as
  \begin{align*}
    \left(X_{0},X_{1}\right)_{\kap,q}
    :=
    \left\{
      f\in X_{0}+X_{1}\cd
      \norm{f\cd\left(X_{0},X_{1}\right)_{\kap,q}}
      <\infty
    \right\},
  \end{align*}
  where
  \begin{align*}
    \norm{f\cd\left(X_{0},X_{1}\right)_{\kap,q}}
    :=
    \begin{dcases}
      \left(
        \int_{0}^{\infty}
        \left(\lam^{-\kap}K(f,\lam;X_{0},X_{1})\right)^{q}
        \,\frac{d\lam}{\lam}
      \right)^{1/q}
      &\text{for }q\in\left]0,\infty\right[,
      \\
      \sup_{\lam>0}\,
      \lam^{-\kap}K(f,\lam;X_{0},X_{1})
      &\text{for }q=\infty.
    \end{dcases}
  \end{align*}
\end{defn}
%%%%%%%%%%%%%%%%%%%%%%%%%%%%%%%%%%%%%%%%%%%%%%%%%%%%%%%%%
\begin{prop}\label{prop2-1}
  Take $X_{0}$ and $X_{1}$ as in Definition \ref{def2-1}.
  Let $\kap\in\left]0,1\right[$, $q\in\left]0,\infty\right]$.
  \begin{enumerate}[{\rm (1)}]
    \item If $X_{0}$ and $X_{1}$ are normed spaces and $q\geq 1$, then $(X_{0},X_{1})_{\kap,q}$ is a normed space.
    \item If $X_{0}$ and $X_{1}$ are complete, then $(X_{0},X_{1})_{\kap,q}$ is complete.
  \end{enumerate}
\end{prop}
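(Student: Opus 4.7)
My plan is to follow the classical recipe from real interpolation theory (see \cite{BLbook}); the whole argument is well known and the main difficulty is bookkeeping rather than any deep idea.

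For (1), I would first verify that, for each fixed $\lam>0$, the $K$-functional $f\mapsto K(f,\lam;X_{0},X_{1})$ is a norm on $X_{0}+X_{1}$. Homogeneity is immediate; subadditivity follows by taking, for each $\ep>0$, $\ep$-nearly optimal decompositions $f=f_{0}+f_{1}$ and $g=g_{0}+g_{1}$ and using $(f_{0}+g_{0})+(f_{1}+g_{1})$ as an admissible decomposition of $f+g$; definiteness uses that $X_{0},X_{1}\hookrightarrow\fX$ continuously together with the Hausdorff property of $\fX$, so that $K(f,\lam)=0$ forces $f$ to be approximable by zero in both $X_{0}$ and $X_{1}$, hence to vanish in $\fX$. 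The norm property of $\norm{\cdot\cd(X_{0},X_{1})_{\kap,q}}$ then reduces to Minkowski's inequality on $L^{q}(\R_{>0},d\lam/\lam)$ applied to $\lam\mapsto\lam^{-\kap}K(\cdot,\lam)$, for which the assumption $q\geq 1$ is essential.

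For (2) (which does not require $q\geq 1$, only the quasi-triangle inequality), I would first recall that $X_{0}+X_{1}$, equipped with the norm $K(\cdot,1;X_{0},X_{1})$, is itself complete whenever $X_{0}$ and $X_{1}$ are complete; this is classical and can be reproved by splitting a Cauchy sequence via near-optimal decompositions. The elementary comparison $K(f,\lam)\geq\min(1,\lam)\,K(f,1)$ combined with the defining integral then yields the continuous embedding $(X_{0},X_{1})_{\kap,q}\hookrightarrow X_{0}+X_{1}$. Given a Cauchy sequence $\{f_{n}\}$ in $(X_{0},X_{1})_{\kap,q}$, this embedding produces a candidate limit $f\in X_{0}+X_{1}$, and I would then apply Fatou's lemma to $\lam\mapsto\left(\lam^{-\kap}K(f_{n}-f_{m},\lam)\right)^{q}$ (sending $m\to\infty$ with $n$ fixed) to conclude $\norm{f_{n}-f\cd(X_{0},X_{1})_{\kap,q}}\to 0$.

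The main technical point I expect to encounter is the lower semicontinuity of $K(\cdot,\lam)$ with respect to the $X_{0}+X_{1}$ topology, which is what legitimises the Fatou step: if $g_{m}\to g$ in $X_{0}+X_{1}$, then $K(g,\lam)\leq\liminf_{m\to\infty}K(g_{m},\lam)$ for every $\lam>0$. This in turn follows from the defining infimum by perturbing an arbitrary decomposition of $g_{m}$ by the small error $g-g_{m}$ to produce an admissible decomposition of $g$ with controllably larger norm. Since every step is completely standard, in the paper itself I would simply cite \cite{BLbook} (Chapter 3) for Proposition \ref{prop2-1} rather than write out the argument in full.
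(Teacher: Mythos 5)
Your proposal is correct and is exactly the standard argument behind the paper's proof, which simply remarks that (1) ``can be easily checked'' and refers to Theorem 3.4.2 in \cite{BLbook} for (2); every step you outline (norm properties of the $K$-functional, Minkowski's inequality for $q\geq 1$, completeness of $X_{0}+X_{1}$, the embedding $(X_{0},X_{1})_{\kap,q}\hookrightarrow X_{0}+X_{1}$, and the Fatou/lower-semicontinuity step) is the content of that reference. Your closing suggestion to cite \cite{BLbook} rather than write it out is precisely what the paper does.
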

%%%%%%%%%%%%%%%%%%%%%%%%%%%%%%%%%%%%%%%%%%%%%%%%%%%%%%%%%
\begin{proof}
    (1) \, It can be easily checked.
    \quad (2) \, See Theorem 3.4.2 in \cite{BLbook}.
\end{proof}
%%%%%%%%%%%%%%%%%%%%%%%%%%%%%%%%%%%%%%%%%%%%%%%%%%%%%%%%%
\begin{thm}\label{LorentzRI}
  Let $p_{0}$ and $p_{1}\in\left]0,\infty\right[$ satisfy $p_{0}\neq p_{1}$ and take $q\in\left]0,\infty\right]$ and $\kap\in\left]0,1\right[$.
  We set $p\in\left]0,\infty\right[$ by the formula $p^{-1}=(1-\kap)p_{0}^{-1}+\kap p_{1}^{-1}$.
  Then we have $\left(L^{p_{0},q_{0}},L^{p_{1},q_{1}}\right)_{\kap,q}=L^{p,q}$ for all $q_{0}, q_{1}\in\left]0,\infty\right]$.
\end{thm}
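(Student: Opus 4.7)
The plan is to establish \emph{Hölmstedt's formula} for the $K$-functional of the pair $(L^{p_0,q_0}, L^{p_1,q_1})$ and then integrate against the weight $\lam^{-\kap}$ by means of Hardy's inequalities. Without loss of generality I assume $p_0<p_1$ and set $\beta:=(1/p_0-1/p_1)^{-1}>0$.

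The first step is to prove that for every $f\in L^{p_0,q_0}+L^{p_1,q_1}$ and every $\lam>0$,
\begin{align*}
K(f,\lam;L^{p_0,q_0},L^{p_1,q_1})
&\asymp
\left(\int_0^{\lam^\beta}(s^{1/p_0}f^{*}(s))^{q_0}\,\frac{ds}{s}\right)^{1/q_0}\\
&\quad+\lam\left(\int_{\lam^\beta}^\infty(s^{1/p_1}f^{*}(s))^{q_1}\,\frac{ds}{s}\right)^{1/q_1},
\end{align*}
with the obvious modifications when $q_i=\infty$. For the upper bound I would decompose $f=g+h$ with $g:=f\chi_{\{|f|>f^{*}(\lam^\beta)\}}$ and $h:=f-g$; the identities $g^{*}(s)=(f^{*}(s)-f^{*}(\lam^\beta))_{+}$ and $h^{*}(s)\leq\min\{f^{*}(s),f^{*}(\lam^\beta)\}$ then reduce the claim to direct computation of the rearrangement integrals. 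For the lower bound I would invoke the subadditivity $f^{*}(s_0+s_1)\leq f_{0}^{*}(s_0)+f_{1}^{*}(s_1)$, valid for any admissible decomposition $f=f_0+f_1$, to recover pointwise control on $f^{*}$ on the intervals $(0,\lam^\beta)$ and $(\lam^\beta,\infty)$.

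With Hölmstedt's formula in hand, I would substitute it into Definition \ref{def2-1} and change variables via $u=\lam^\beta$. The first piece becomes
\begin{align*}
\int_0^\infty u^{-\kap q/\beta}\left(\int_0^u(s^{1/p_0}f^{*}(s))^{q_0}\,\frac{ds}{s}\right)^{q/q_0}\frac{du}{u},
\end{align*}
to which Hardy's inequality for averages on $(0,u)$ applies because $\kap q/\beta>0$; it collapses to a constant multiple of $\int_0^\infty u^{q/p_0-\kap q/\beta}f^{*}(u)^{q}\,du/u$. A dual Hardy inequality treats the tail piece arising from the second integral. The arithmetic identity
\begin{align*}
\frac{1}{p_0}-\frac{\kap}{\beta}=\frac{1-\kap}{p_0}+\frac{\kap}{p_1}=\frac{1}{p},
\end{align*}
together with its analogue for the tail, shows that both pieces reduce to $\int_0^\infty(s^{1/p}f^{*}(s))^{q}\,ds/s=\norm{f\cd L^{p,q}}^{q}$, yielding the equivalence of norms in both directions.

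The principal obstacle is the lower bound in Hölmstedt's formula: the upper bound is concrete and follows from explicit rearrangement identities, while the lower bound requires optimality over \emph{all} admissible decompositions, and the argument must be adapted to each combination of $q_0,q_1\in\left]0,\infty\right]$, with the case $q<1$ requiring additional care because the Lorentz quasi-norms then lack subadditivity. Since this computation is classical (see \cite{BLbook}), once the $K$-functional has been identified the remainder of the proof is essentially mechanical.
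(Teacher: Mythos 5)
Your proposal is correct in outline, but it takes a genuinely different route from the paper, which offers no proof at all and simply cites Theorem 5.3.1 of \cite{BLbook}. The proof in that reference proceeds by first computing $K(t,f;L^{1},L^{\infty})=\int_{0}^{t}f^{*}(s)\,ds$, deducing $\left(L^{p_{0}},L^{p_{1}}\right)_{\kappa,q}=L^{p,q}$ via the power theorem, and then upgrading to general second indices $q_{0},q_{1}$ by the reiteration theorem; your route instead computes the $K$-functional of the couple $\left(L^{p_{0},q_{0}},L^{p_{1},q_{1}}\right)$ directly (H\"olmstedt's formula) and integrates against $\lambda^{-\kappa}$ using Hardy's inequalities. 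The trade-off is clear: reiteration avoids the delicate two-sided $K$-functional estimate but requires the reiteration machinery as a black box, whereas your argument is self-contained at the level of the couple in question but puts all the difficulty into the lower bound of H\"olmstedt's formula, which you correctly flag as the principal obstacle. Two small points to fix if you write this out in full. First, the identity $g^{*}(s)=(f^{*}(s)-f^{*}(\lambda^{\beta}))_{+}$ holds for the \emph{vertical} truncation $g=(|f|-f^{*}(\lambda^{\beta}))_{+}\operatorname{sgn}f$, not for the cutoff $g=f\chi_{\{|f|>f^{*}(\lambda^{\beta})\}}$ you wrote, whose rearrangement is $f^{*}\chi_{[0,\alpha_{f}(f^{*}(\lambda^{\beta})))}$; either decomposition works, but the formula you state matches the former. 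Second, the Hardy step applies the inequality to $\left(\int_{0}^{u}\phi\right)^{q/q_{0}}$ with an exponent $q/q_{0}$ that may be less than $1$, so one must use the version exploiting the monotonicity of $f^{*}$ (or a discretized power-weight Hardy inequality) rather than the textbook $r\geq 1$ form; for the purposes of this paper only $q_{0}=q_{1}=\infty$ and $q\in\{1,\infty\}$ are ever used, where these issues are harmless.
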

%%%%%%%%%%%%%%%%%%%%%%%%%%%%%%%%%%%%%%%%%%%%%%%%%%%%%%%%%
\begin{proof}
  See Theorem 5.3.1 in \cite{BLbook}.
\end{proof}
%%%%%%%%%%%%%%%%%%%%%%%%%%%%%%%%%%%%%%%%%%%%%%%%%%%%%%%%%
\begin{rmk}\label{rmk2-1}
  If $1<p<\infty$ and $1\leq q\leq\infty$, we may write the space $L^{p,q}$ as
  \begin{align*}
    L^{p,q}
    =\left(L^{p_{0},p_{0}},L^{p_{1},p_{1}}\right)_{\kap,q}
    =\left(L^{p_{0}},L^{p_{1}}\right)_{\kap,q}
  \end{align*}
  with $p_{0},p_{1}\in\left]1,\infty\right[$, by means of Theorem \ref{LorentzRI}.
  %Thus, the space $L^{p,q}$ becomes a Banach space in the light of  Proposition \ref{prop2-1}.
  Thus, Proposition \ref{prop2-1} (1) and (2) identify the space $L^{p,q}$ as a Banach space.
\end{rmk}
%%%%%%%%%%%%%%%%%%%%%%%%%%%%%%%%%%%%%%%%%%%%%%%%%%%%%%%%%
We prepare a real interpolation result for Lorentz--Morrey spaces.
\begin{prop}\label{cor-LRI}
  Let $\left(p_{0},p_{1},q_{0},q_{1}\right)\in\left]0,\infty\right[^{4}$ satisfy $p_{0}\neq p_{1}$, $q_{0}\neq q_{1}$,
  $q_{0}\leq p_{0}$ and $q_{1}\leq p_{1}$.
  We take $r\in\left]0,\infty\right]$ and $\kap\in\left]0,1\right[$,
  and set $p$, $q$ by the formula
  \begin{align*}
    \left(p^{-1},q^{-1}\right)=(1-\kap)\left(p_{0}^{-1},q_{0}^{-1}\right)+\kap\left(p_{1}^{-1},q_{1}^{-1}\right).
  \end{align*}
  %$\left(p^{-1},q^{-1}\right)=(1-\kap)\left(p_{0}^{-1},q_{0}^{-1}\right)+\kap\left(p_{1}^{-1},q_{1}^{-1}\right)$.
  Then $q\leq p$ holds and the embedding
  \begin{align*}
    \left(
        M^{p_{0}}_{q_{0},r_{0}}, M^{p_{1}}_{q_{1},r_{1}}
    \right)_{\kap,r}
    \subset
    M^{p}_{q,r}
  \end{align*}
  is valid
  for all $r_{0}$, $r_{1}\in\left]0,\infty\right]$.
\end{prop}
This property has a key role in section \ref{sec3}.
It is noteworthy that this kind of embedding fails in classical Morrey spaces
(see Theorem 3 (v) in \cite{LR2013}).
We prove this property in a more general form,
for which we introduce the following definition.
\begin{defn}
{($X$-Morrey spaces)}
Let $X$ be a quasi-normed function space and $p\in\left]0,\infty\right[$.
The local $X$-Morrey space $M^{p}_{X}$ is defined as the set of functions $f\in X$ such that $\norm{f\cd M^{p}_{X}}<\infty$,
where
\begin{align*}
  \norm{f\cd M^{p}_{X}}
  :=
  \sup_{0<R\leq 1}\sup_{z\in\R^{N}}
  R^{\frac{N}{p}}\norm{\left(f_{R}\right)\chi_{B(z,1)}\cd X}.
\end{align*}
Here, we set $f_{\lam}(x):=f(\lam x)$ for all $\lam>0$.
% Also, we define the $X$-Morrey space $\cM^{p}_{X}$ as the set of functions $f\in X$ such that $\norm{f\cd\cM^{p}_{X}}$,
% where
% \begin{align*}
%   \norm{f\cd\cM^{p}_{X}}
%   :=
%   \sup_{R>0}\sup_{z\in\R^{N}}
%   R^{\frac{N}{p}}\norm{f_{R}\chi_{B(z,1)}\cd X}.
% \end{align*}
\end{defn}
Lemma \ref{lem:Lorentz}.(\ref{lem:Lorentz-4}) suggests that this definition is consistent with our Lorentz--Morrey spaces.
Note that $\left(f_{R}\right)\chi_{B(z,1)}=\left(f\chi_{B(Rz,R)}\right)_{R}$ holds for all $R>0$ and $z\in\R^{N}$.
\begin{prop}\label{XMoRI}
  Let $(p_{0},p_{1})\in\left]0,\infty\right[^{2}$
  satisfy $p_{0}\neq p_{1}$.
  We take $r\in\left]0,\infty\right]$ and $\kap\in\left]0,1\right[$,
  and set $p$ by the formula
  $p^{-1}=(1-\kap)p_{0}^{-1}+\kap p_{1}^{-1}$.
  Assume that quasi-normed function spaces $X$, $X_{0}$, $X_{1}$ have the property
  $
    \left(X_{0},X_{1}\right)_{\kap,r}
    \subset X.
  $
  Then the embedding
  $
    \left(M^{p_{0}}_{X_{0}},M^{p_{1}}_{X_{1}}\right)_{\kap,r}
    \subset M^{p}_{X}
  $
  is valid.
\end{prop}
\begin{proof}[Proof of Proposition \ref{XMoRI}]
  We restrict ourselves to the case $r=\infty$ (the remaining cases are analogous).
  Fix $z\in\R^{N}$ and $R\in\left]0,1\right]$.
  Take $f\in\left(
    M^{p_{0}}_{X_{0}},M^{p_{1}}_{X_{1}}
  \right)_{\kap,\infty}$.
  Let $(g,h)\in M^{p_{0}}_{X_{0}}\times M^{p_{1}}_{X_{1}}$ be an arbitrary decomposition $f=g+h$.
  Then the pair $\left(g_{R}\chi_{B(z,1)},h_{R}\chi_{B(z,1)}\right)$ becomes an element of $X_{0}\times X_{1}$
  and satisfies $f_{R}\chi_{B(z,1)}=g_{R}\chi_{B(z,1)}+h_{R}\chi_{B(z,1)}$.
  Thus, for all fixed $\lam>0$, we have
  \begin{align*}
    K\left(
      f_{R}\chi_{B(z,1)},\lam;X_{0},X_{1}
    \right)
    &\leq
    \norm{g_{R}\chi_{B(z,1)}\cd X_{0}}
    +\lam\norm{h_{R}\chi_{B(z,1)}\cd X_{1}}
    \\
    &\leq
    R^{-\frac{N}{p_{0}}}\norm{g\cd M^{p_{0}}_{X_{0}}}
    +\lam
    R^{-\frac{N}{p_{1}}}\norm{h\cd M^{p_{1}}_{X_{1}}}
    \\
    &=
    R^{-\frac{N}{p_{0}}}
    \left\{
        \norm{g\cd M^{p_{0}}_{X_{0}}}
        +\lam
        R^{\frac{N}{p_{0}}-\frac{N}{p_{1}}}
        \norm{h\cd M^{p_{1}}_{X_{1}}}
    \right\}.
  \end{align*}
  Setting $\Lam:=\lam R^{\frac{N}{p_{0}}-\frac{N}{p_{1}}}$,
  the arbitrariness of the decomposition implies
  \begin{align*}
    K\left(
      f_{R}\chi_{B(z,1)},\lam;X_{0},X_{1}
    \right)
    &\leq
    R^{-\frac{N}{p_{0}}}
    \Lam^{\kap}\Lam^{-\kap}
    K\left(
      f,\Lam;M^{p_{0}}_{X_{0}},M^{p_{1}}_{X_{1}}
    \right)
    \\
    &\leq
    R^{-\frac{N}{p_{0}}}
    \Lam^{\kap}
    \norm{f\cd\left(M^{p_{0}}_{X_{0}},M^{p_{1}}_{X_{1}}\right)_{\kap,\infty}}
  \end{align*}
  for all $\lam>0$.
  Therefore,
  in the light of $X\supset\left(X_{0},X_{1}\right)_{\kap,\infty}$,
  we obtain
  \begin{align*}
    R^{\frac{N}{p}}
    \norm{f_{R}\chi_{B(z,1)}\cd X}
    &\leq
    CR^{\frac{N}{p}}
    \norm{f_{R}\chi_{B(z,1)}
    \cd\left(X_{0},X_{1}\right)_{\kap,\infty}}
    \\
    &=CR^{\frac{N}{p}}
    \sup_{\lam>0}
    \lam^{-\kap}
    K\left(f_{R}\chi_{B(z,1)},\lam;X_{0},X_{1}\right)
    \\
    &\leq
    CR^{\frac{N}{p}}
    \sup_{\lam>0}
    \lam^{-\kap}
    R^{-\frac{N}{p_{0}}}
    \Lam^{\kap}
    \norm{f\cd\left(M^{p_{0}}_{X_{0}},M^{p_{1}}_{X_{1}}\right)_{\kap,\infty}}
    \\&=C
    \norm{f\cd\left(M^{p_{0}}_{X_{0}},M^{p_{1}}_{X_{1}}\right)_{\kap,\infty}}.
  \end{align*}
  Taking the supremum over $z\in\R^{N}$ and $R\in\left]0,1\right]$ leads the desired embedding.
\end{proof}
\begin{proof}[Proof of Proposition \ref{cor-LRI}]
Clearly, $q\leq p$.
Use Theorem \ref{LorentzRI} and Proposition \ref{XMoRI} with $X_{i}=L^{q_{i},r_{i}}$ ($i\in\left\{,0,1\right\}$).
\end{proof}
%%%%%%%%%%%%%%%%%%%%%%%%%%%%%%%%%%%%%%%%%%%%%%%%%%%%%%%%%
\subsection{Properties of BLM spaces}
%%%%%%%%%%%%%%%%%%%%%%%%%%%%%%%%%%%%%%%%%%%%%%%%%%%%%%%%%
We prove some properties of BLM spaces, with reference to \cite{KozonoYamazaki}.
\begin{prop}\label{prop:2-2}
  We have the following continuous embeddings
  \begin{align*}
    B^{0,1}_{p,q,\infty}
    \subset
    M^{p}_{q,\infty}%(\R^{N})
    \subset
    B^{0,\infty}_{p,q,\infty}
  \end{align*}
  for $1<q\leq p<\infty$.
\end{prop}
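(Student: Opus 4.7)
The plan is to prove the two embeddings separately by short and essentially standard arguments: the right-hand inclusion follows from Young's inequality for uniformly local Lorentz spaces (Proposition \ref{prop:YO-loc}), while the left-hand inclusion follows from completeness of $L^{p,\infty}_{\mathrm{ul}}$ together with convergence of the inhomogeneous Littlewood--Paley synthesis in $\cS'$.

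For the right-hand embedding $L^{p,\infty}_{\mathrm{ul}} \subset B^{0}_{(p,\infty),\infty}$, I would write
$\Delta_{j} f = \check{\phi}_{j} \ast f$, where $\check{\phi}_{0} := \cF^{-1}[\phi_{(0)}]$ and $\check{\phi}_{j} := \cF^{-1}[\phi_{j}]$ for $j\geq 1$. Since $\phi_{(0)}, \phi_{j}\in C_{c}^{\infty}(\R^{N})$, each $\check{\phi}_{j}$ belongs to $\cS(\R^{N})\subset L^{1}(\R^{N})$. Moreover, for $j\geq 1$ the identity $\check{\phi}_{j}(x) = 2^{jN}\check{\phi}_{0}(2^{j}x)$ gives, by a change of variables, $\norm{\check{\phi}_{j}\cd L^{1}}=\norm{\check{\phi}_{0}\cd L^{1}}$, so that $C_{0} := \sup_{j\in\N}\norm{\check{\phi}_{j}\cd L^{1}} < \infty$. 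Applying Proposition \ref{prop:YO-loc} therefore yields, uniformly in $j$,
\begin{align*}
\norm{\Delta_{j}f\cd L^{p,\infty}_{\mathrm{ul}}}
\leq C\norm{\check{\phi}_{j}\cd L^{1}}\norm{f\cd L^{p,\infty}_{\mathrm{ul}}}
\leq CC_{0}\norm{f\cd L^{p,\infty}_{\mathrm{ul}}},
\end{align*}
and taking the supremum over $j\in\N$ produces the embedding.

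For the left-hand embedding $B^{0}_{(p,\infty),1}\subset L^{p,\infty}_{\mathrm{ul}}$, I would fix $f\in B^{0}_{(p,\infty),1}$ and introduce the partial sums $S_{N}:=\sum_{j=0}^{N}\Delta_{j}f$. For $N>M$ the triangle inequality yields
\begin{align*}
\norm{S_{N}-S_{M}\cd L^{p,\infty}_{\mathrm{ul}}}
\leq \sum_{j=M+1}^{N}\norm{\Delta_{j}f\cd L^{p,\infty}_{\mathrm{ul}}},
\end{align*}
which is the tail of a convergent series, so $\{S_{N}\}$ is Cauchy in the Banach space $L^{p,\infty}_{\mathrm{ul}}$ (recall $p>1$) and converges to some $g\in L^{p,\infty}_{\mathrm{ul}}$ satisfying $\norm{g\cd L^{p,\infty}_{\mathrm{ul}}}\leq\norm{f\cd B^{0}_{(p,\infty),1}}$. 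On the Fourier side one has $S_{N}=\cF^{-1}[\zeta(2^{-N}|\cdot|)\cF f]$, and since $\zeta(2^{-N}|\xi|)\to 1$ boundedly, $S_{N}\to f$ in $\cS'(\R^{N})$.

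The only genuinely delicate point is identifying the Banach-space limit $g$ with the original distribution $f$. This will be handled by verifying that convergence in $L^{p,\infty}_{\mathrm{ul}}$ implies convergence in $\cS'$: for $\psi\in\cS(\R^{N})$ one splits $\int\psi(S_{N}-g)\,dx$ over unit balls centred at integer lattice points and bounds each piece by $\norm{(S_{N}-g)\chi_{B(z,1)}\cd L^{p,\infty}}\norm{\psi\chi_{B(z,1)}\cd L^{p',1}}$ via Hölder's inequality in Lorentz spaces, with the Schwartz decay of $\psi$ providing a summable weight in $z$. Combined with $S_{N}\to f$ in $\cS'$, this forces $g=f$ in $\cS'$, hence $f\in L^{p,\infty}_{\mathrm{ul}}$ with the desired bound. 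I expect this identification step to be the only nontrivial part of the argument.
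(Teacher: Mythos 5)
Your proposal is correct and follows essentially the same route as the paper: the right-hand embedding via the uniformly local Young inequality (Proposition \ref{prop:YO-loc}) together with the scale-invariance of $\norm{\cF^{-1}\phi_{j}\cd L^{1}}$, and the left-hand embedding by showing the partial Littlewood--Paley sums are Cauchy in the Banach space $L^{p,\infty}_{\mathrm{ul}}$. The only difference is that you spell out the identification of the $L^{p,\infty}_{\mathrm{ul}}$-limit with $f$ via a lattice decomposition and Lorentz--H\"older duality, a step the paper's proof leaves implicit.
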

\begin{proof}
  Proposition \ref{prop:YO-LM} and $\norm{\cF^{-1}\phi_{j}\cd L^{1}}=\norm{\cF^{-1}\phi_{0}\cd L^{1}}$ (for $j\in\Z$) give that
  \begin{align*}
    \norm{\Del_{j}f\cd M^{p}_{q,\infty}}
    \leq
    \begin{cases}
      % C\norm{\cF^{-1}\phi_{j}\mid L^{1}}
      % \norm{f\cd M^{p}_{q,\infty}}
      % =
      C\norm{\cF^{-1}\phi_{0}\mid L^{1}}
      \norm{f\cd M^{p}_{q,\infty}}
      &\text{for }j\geq 1
      \\
      C\norm{\cF^{-1}\phi_{(0)}\mid L^{1}}
      \norm{f\cd M^{p}_{q,\infty}}
      &\text{for }j=0
    \end{cases}
  \end{align*}
  holds for all $f\in M^{p}_{q,\infty}(\R^{N})$.
  We take the supremum over $j\in\N$ to get the second one.
  Next we fix $f\in B^{0,1}_{p,q,\infty}$.
  We can easily check that $f=\sum_{j\in\N}\Del_{j}f$ in $\cS'(\R^{N})$
  %(Because we can show that $\Del_{j}\psi$ converges to $\psi$ in $\cS(\R^{N})$ as $j\to\infty$, for fixed $\psi\in\cS$.)
  and can conduct the following calculation
  \begin{align*}
    \norm{\sum_{j=0}^{n}\Del_{j}f-\sum_{j=0}^{m}\Del_{j}f\cd M^{p}_{q,\infty}}
    &\leq
    \sum_{j=m+1}^{n}
    \norm{\Del_{j}f\cd M^{p}_{q,\infty}}
    \left(\leq\norm{f\mid B^{0,1}_{p,q,\infty}}<\infty\right)
    \\
    &\rightarrow 0
    \quad(\text{as }m,n\to\infty).
  \end{align*}
  It turns out that $f\in M^{p}_{q,\infty}$.
  The embedding is easily seen to be continuous.
  % The continuity follows from the inequality
  % \begin{align*}
  %   \norm{f\mid L^{p,\infty}_{\mathrm{ul}}}
  %   \leq
  %   \sum_{j\in\N}
  %   \norm{\Del_{j}f\mid L^{p,\infty}_{\mathrm{ul}}}
  %   =\norm{f\mid B^{0}_{(p,\infty),1}}.
  % \end{align*}
\end{proof}
Note that we didn't use properties of Lorentz--Morrey spaces apart from their completeness and Proposition \ref{prop:YO-LM} (see also Proposition 2.11 in \cite{KozonoYamazaki}).
We next verify the embedding theorem of Sobolev-type.
\begin{prop}\label{prop:SE}
  {(Sobolev-type embedding)}
  Let $r,\kap\in\left[1,\infty\right]$, $1<q\leq p<\infty$ and $s\in\R$.
  %Let $1\leq r\leq \infty$, $s\in\R$, $1<q\leq p<\infty$ and $1\leq\kap\leq\infty$.
  We have the following continuous embeddings:
  \begin{enumerate}[{\rm(1)}]
    \item $B^{s,r}_{p,q,\kap}\subset B^{s-N/p,r}_{\infty}$.
    \item $B^{s,r}_{p,q,\kap}\subset B^{s-N(1-l)/p,r}_{p/l,q/l,\kap}$\quad for $l\in\left]0,1\right[$.
  \end{enumerate}
  Here, $B^{s-N/p,r}_{P}$ stands for inhomogeneous Besov spaces defined in the same way as inhomogeneous BLM spaces by replacing Lorentz--Morrey spaces with the standard $L^{P}$ space.
\end{prop}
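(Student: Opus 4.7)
The plan is to reduce both embeddings to Bernstein-type estimates at each dyadic level, specifically
\begin{align*}
\norm{\Del_{j}f\cd L^{\infty}(\R^{N})}
&\leq C\,2^{jN/p}\norm{\Del_{j}f\cd L^{p,q}_{\mathrm{ul}}(\R^{N})},\\
\norm{\Del_{j}f\cd L^{p/\theta,q/\theta}_{\mathrm{ul}}(\R^{N})}
&\leq C\,2^{jN(1-\theta)/p}\norm{\Del_{j}f\cd L^{p,q}_{\mathrm{ul}}(\R^{N})},
\end{align*}
with $C$ independent of $j\in\N$ and $f$. Once these block-level estimates are in hand, multiplying by $2^{sj}$ and applying the $\ell^{r}(\N)$ norm immediately yields (1) and (2) via Definition \ref{def:iBL}.

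The common starting point is a reproducing identity. Select $\tilde{\phi}\in C_{c}^{\infty}(\R^{N})$ that equals $1$ on $\mathrm{supp}\,\phi_{0}$ and vanishes near the origin, and set $\tilde{\phi}_{j}(\xi):=\tilde{\phi}(2^{-j}\xi)$ for $j\geq 1$, with an analogous $\tilde{\phi}_{(0)}$ covering $\mathrm{supp}\,\phi_{(0)}$ for $j=0$. Put $\tilde{\eta}_{j}:=\cF^{-1}\tilde{\phi}_{j}$, so that $\Del_{j}f=\tilde{\eta}_{j}\ast\Del_{j}f$. For $j\geq 1$, the scaling $\tilde{\eta}_{j}(x)=2^{jN}\tilde{\eta}_{0}(2^{j}x)$ gives $\norm{\tilde{\eta}_{j}\cd L^{a}(\R^{N})}=2^{jN(1-1/a)}\norm{\tilde{\eta}_{0}\cd L^{a}}$ for every $a\in\left[1,\infty\right]$, along with the pointwise decay $|\tilde{\eta}_{j}(x)|\leq C_{M}2^{jN}(1+2^{j}|x|)^{-M}$ for every $M\in\N$.

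For (2), I would establish the Young-type estimate
\[
\norm{\tilde{\eta}_{j}\ast g\cd L^{p/\theta,q/\theta}_{\mathrm{ul}}(\R^{N})}
\leq C\norm{\tilde{\eta}_{j}\cd L^{a}(\R^{N})}\norm{g\cd L^{p,q}_{\mathrm{ul}}(\R^{N})},
\qquad \tfrac{1}{a}=1-\tfrac{1-\theta}{p},
\]
by adapting the level-set argument in the proof of Proposition \ref{prop:YO-loc}: split $g=g_{0}+g_{1}$ at a threshold $M=\lam/(2\norm{\tilde{\eta}_{j}\cd L^{a}})$, control $g_{0}\ast\tilde{\eta}_{j}$ pointwise via Hölder's inequality for Lorentz spaces and $g_{1}\ast\tilde{\eta}_{j}$ via Chebyshev's inequality, then take the supremum over unit balls. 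Combined with the scaling above this produces the factor $2^{jN(1-\theta)/p}$.

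For (1), the target being $L^{\infty}$, I would decompose the convolution along the unit lattice $\Z^{N}$ and apply Hölder's inequality for Lorentz spaces on each cell:
\[
|\tilde{\eta}_{j}\ast g(x)|
\leq
\sum_{z\in\Z^{N}}\int_{B(z,1)}|\tilde{\eta}_{j}(x-y)||g(y)|\,dy
\leq
\norm{g\cd L^{p,q}_{\mathrm{ul}}}\sum_{z\in\Z^{N}}\norm{\tilde{\eta}_{j}\chi_{B(x-z,1)}\cd L^{p',q'}}.
\]
Splitting the sum into the near region $|x-z|\leq 2$, where the bound $\norm{\tilde{\eta}_{j}\cd L^{p',q'}}\leq C\,2^{jN/p}$ holds by scaling, and the far region $|x-z|\geq 2$, where the pointwise decay yields $\norm{\tilde{\eta}_{j}\chi_{B(x-z,1)}\cd L^{p',q'}}\leq C_{M}\,2^{j(N-M)}|x-z|^{-M}$ for any $M$, shows that the total sum is $\leq C\,2^{jN/p}$ uniformly in $x$. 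The main technical obstacle is this bookkeeping for the uniformly local norms: both the level-set argument for (2) and the lattice decomposition for (1) require careful balancing of the kernel scale $2^{-j}$ against the unit-scale localizations built into $L^{p,q}_{\mathrm{ul}}$. All the necessary ingredients, however, are already present in or near the proof of Proposition \ref{prop:YO-loc}, after which the passage from the dyadic-level estimates to the claimed embeddings is routine.
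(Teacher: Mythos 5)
Your part (1) is essentially the paper's argument: the paper also uses a reproducing kernel $\Psi_j=\cF^{-1}\Phi_j$ with $\Phi_j=\phi_{j-1}+\phi_j+\phi_{j+1}$, splits the convolution into a near region and far dyadic annuli covered by unit balls, and applies the Lorentz--H\"older inequality $L^{p,\infty}\cdot L^{p/(p-1),1}\to L^{1}$ on each ball together with the rapid decay of $\Psi_0$; your integer-lattice decomposition is the same idea with slightly different bookkeeping, and it goes through.

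For part (2), however, the key inequality you propose is false as stated. In the uniformly local setting one cannot have
\begin{align*}
\norm{\tilde{\eta}_{j}\ast g\cd L^{p/\theta,q/\theta}_{\mathrm{ul}}}
\leq C\,\norm{\tilde{\eta}_{j}\cd L^{a}}\,\norm{g\cd L^{p,q}_{\mathrm{ul}}},
\qquad \tfrac{1}{a}=1-\tfrac{1-\theta}{p},
\end{align*}
with $C$ depending on the kernel only through its $L^{a}$ norm: take $g\equiv 1\in L^{p,q}_{\mathrm{ul}}$ and a nonnegative kernel $k\in L^{a}\setminus L^{1}$, so that $k\ast g\equiv\int k=\infty$. (This is exactly why the paper's Proposition \ref{prop:YO-loc} keeps the kernel in $L^{1}$ and why its proof is not a formal consequence of the global O'Neil inequality.) Your level-set sketch inherits the problem: the H\"older bound $|g_{0}\ast\tilde{\eta}_{j}|\leq\norm{g_{0}\cd L^{a'}}\norm{\tilde{\eta}_{j}\cd L^{a}}$ requires a \emph{global} $L^{a'}$ norm of $g_{0}$, which is not controlled by $\norm{g\cd L^{p,q}_{\mathrm{ul}}}$ (again $g\equiv1$ gives $g_{0}\equiv1\notin L^{a'}$). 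The estimate is reparable — you must re-insert the lattice decomposition and the pointwise decay $|\tilde{\eta}_{j}(x)|\leq C_{M}2^{jN}(1+2^{j}|x|)^{-M}$ into the $L^{a}$ factor, after which the scaling $2^{jN(1-\theta)/p}$ still emerges — but that is an extra layer of work the paper avoids entirely: having established the $L^{\infty}$ bound $\norm{\Del_{j}f\cd L^{\infty}}\leq C2^{jN/p}\norm{\Del_{j}f\cd L^{p,\infty}_{\mathrm{ul}}}$ in step (1), the paper deduces (2) on each unit ball from the elementary interpolation
\begin{align*}
\norm{\Del_{j}f\,\chi_{B(z,1)}\cd L^{p/\theta,\infty}}
\leq
\norm{\Del_{j}f\,\chi_{B(z,1)}\cd L^{p,\infty}}^{\theta}\,
\norm{\Del_{j}f\cd L^{\infty}}^{1-\theta},
\end{align*}
which needs no new convolution inequality. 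I would recommend either adopting that shortcut or stating and proving the decay-weighted uniformly local Young inequality explicitly.
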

To obtain this, we modify the proof of Theorem 2.5 in \cite{KozonoYamazaki}.
%We mention that there is another way to prove this claim (see Remark \ref{locMorreyLorentz} below).
First we prepare the following estimate.
\begin{lem}\label{lem:SE}
    Let $1<q\leq p<\infty$ and $1\leq\kap\leq\infty$.
    There exists $C>0$ such that the inequalities
    \begin{align}\label{eq:2-2}
        \sup_{x\in\R^{N}}
        \left|\Del_{j}f(x)\right|
        \leq C2^{jN/p}
        \norm{\Del_{j}f\cd M^{p}_{q,\infty}}
        \leq C2^{jN/p}
        \norm{\Del_{j}f\cd M^{p}_{q,\kap}}
    \end{align}
    are valid
    for all $f\in M^{p}_{q,\infty}$ and for all $j\in\N$.
\end{lem}
We prove only the former inequality (the latter one is a consequence of the standard fact that $L^{q,\kap}\subset L^{q,\infty}$).
\begin{proof}[Proof of Lemma \ref{lem:SE}]
We write
$\Phi_{j}(\xi):=\phi_{j-1}(\xi)+\phi_{j}(\xi)+\phi_{j+1}(\xi)$ for $j\in\Z$ and $\Phi_{(0)}(\xi):=\phi_{(0)}(\xi)+\phi_{1}(\xi)$.
Then identities $\phi_{j}(\xi)\equiv\Phi_{j}(\xi)\phi_{j}(\xi)$, $\phi_{(0)}(\xi)\equiv\Phi_{(0)}(\xi)\phi_{(0)}(\xi)$
and $\Psi_{j}(x):=\cF^{-1}\Phi_{j}(x)=2^{Nj}\Psi_{0}(2^{j}x)$ hold.
Fix $x\in\R^{N}$.
We decompose $|\Del_{j}f(x)|$ as
\begin{align}\label{eq:SE-1}
  &\left|\Del_{j}f(x)\right|
  =
  \left|\cF^{-1}\left[\Phi_{j}\phi_{j}\cF f\right](x)\right|
  \leq
  \int_{\R^{N}}
  \left|\Psi_{j}(y)\right|
  \left|\Del_{j}f(x-y)\right|
  \,dy\notag
  \\
  &\leq
  2^{Nj}
  \left(\int_{B(0,2^{-j})}+
  \sum_{k=-j}^{\infty}\int_{B(0,2^{k+1})\setminus B(0,2^{k})}\right)
  \left|\Psi_{0}(2^{j}y)\right|
  \left|\Del_{j}f(x-y)\right|
  \,dy.
  % \\
  % &=:
  % 2^{Nj}
  % \left(I+\sum_{k=0}^{\infty}II_{k}\right)
\end{align}
For the first term in \eqref{eq:SE-1}, we deduce that
\begin{align*}
  &
  \int_{B(0,2^{-j})}
  \left|\Psi_{0}(2^{j}y)\right|
  \left|\Del_{j}f(x-y)\right|
  \,dy
  \leq
  \int_{\R^{N}}
  \left|\Psi_{0}(2^{j}y)\right|
  \left|\Del_{j}f(x-y)\chi_{B(0,2^{-j})}(y)\right|
  \,dy
  \\
  \leq &\,
  \norm{\Psi_{0}(2^{j}y)\cd L^{\frac{q}{q-1},1}}
  \norm{\Del_{j}f\chi_{B(x,2^{-j})}\cd L^{q,\infty}}
  \leq
  C2^{\frac{Nj(1-p)}{p}}
  \norm{\Del_{j}f\cd M^{p}_{q,\infty}}.
\end{align*}
Note that there exists a positive integer $\Om=\Om_{N}$ such that
for all $r>0$,
\begin{align*}
  B(0,2r)\subset\bigcup_{m=1}^{\Om}B(z_{m},r)
  \quad\text{holds for some}\quad
  \left\{z_{m}\right\}_{m=1}^{\Om},
\end{align*}
and therefore we can pick $\left\{z_{m}\right\}_{m=1}^{\Om^{k}}$ satisfying $B(0,2^{k}r)\subset\bigcup_{m=1}^{\Om^{k}}B(z_{m},r)$ for each $k\in\N$ and for all $r>0$.
Since $\Psi_{0}=\cF^{-1}\Phi_{0}$ is a rapidly decreasing function,
it holds that
\begin{align*}
  & 2^{Nj}\int_{B(0,2^{k+1})\setminus B(0,2^{k})}
  \left|\Psi_{0}(2^{j}y)\right|
  \left|\Del_{j}f(x-y)\right|
  \,dy
  \\
  &\leq
  2^{Nj}
  \sum_{m=1}^{\Om^{k+1}}
  \int_{\R^{N}}
  \left|\Psi_{0}(2^{j}y)\chi_{\R^{N}\setminus B(0,2^{k})}(y)\right|
  \left|\Del_{j}f(x-y)\chi_{B(z_{m},1)}(y)\right|
  \,dy
  \\
  &\leq
  2^{jN/q}
  \sum_{m=1}^{2^{k\log_{2}\Om}\Om}
  \norm{\Psi_{0}(y)\chi_{\R^{N}\setminus B(0,2^{k+j})}(y)\cd L^{\frac{q}{q-1},1}}
  \norm{\Del_{j}f\chi_{B(x-z_{m},1)}\cd L^{q,\infty}}
  \\
  &\leq
  C2^{k\log_{2}{\Om}+jN/q}
  \norm{\Psi_{0}(y)\chi_{\R^{N}\setminus B(0,2^{k})}\cd L^{\frac{q}{q-1},1}}
  \norm{\Del_{j}f\cd M^{p}_{q,\infty}}
  \\
  &\leq
  C2^{jN/q}2^{-k}
  \norm{|y|^{1+\log_{2}{\Om}}
  \Psi_{0}(y)\chi_{\R^{N}\setminus B(0,2^{k})}
  \cd L^{\frac{q}{q-1},1}}
  \norm{\Del_{j}f\cd M^{p}_{q,\infty}}
  \\
  &\leq
  C2^{jN/q}2^{-k}
  \norm{\Del_{j}f\cd M^{p}_{q,\infty}}
\end{align*}
for each $k\in\N$.
Plugging these estimate into \eqref{eq:SE-1} yields \eqref{eq:2-2} for all $j\geq 1$ ($j=0$ case also follows by the same argument).
\end{proof}
\begin{proof}[Proof of Proposition \ref{prop:SE}]
  (1) It follows from \eqref{eq:2-2} immediately.

  (2) We see $\kap =\infty$ case (the rest case is proved in a similar way).
  Clearly
  \begin{align*}
    f^{\ast}(\lam)
    \leq
    \norm{f\mid L^{\infty}}
  \end{align*}
  holds for all $\lam>0$ and for all $f\in L^{\infty}$.
  Let $l\in\left]0,1\right[$ and $j\in\N$.
  %もし逆向きの不等式が成り立つとすると，あるシグマであってfのL^inftyノルムより大きなものが上限を取る集合に入ることになるが，その条件式の「>」は明らかに成り立ってない．
  Then, we have
  % \begin{align*}
  %   \norm{\Del_{j}f\cd M^{p/\theta}_{q/\theta,\infty}}
  %   &=
  %   %\sup_{\substack{0<R\leq 1\\z\in\R^{N}}}
  %   \sup_{0<R\leq 1}\sup_{z\in\R^{N}}
  %   R^{\frac{N\theta}{p}-\frac{N\theta}{q}}
  %   \norm{\Del_{j}f\chi_{B(z,R)}\cd L^{q/\theta,\infty}}
  %   % \sup_{\lam>0}
  %   % \lam^{\theta/p}
  %   % \left[\Del_{j}f\chi_{B(z,1)}\right]^{\ast}(\lam)
  %   % =
  %   \\
  %   &\leq
  %   \sup_{0<R\leq 1}\sup_{z\in\R^{N}}
  %   \sup_{\lam>0}
  %   \lam^{\theta/q}
  %   \left[\Del_{j}f\chi_{B(z,R)}\right]^{\ast}(\lam)^{\theta}
  %   \left[\Del_{j}f\chi_{B(z,R)}\right]^{\ast}(\lam)^{1-\theta}
  %   \\
  %   &\leq
  %   \sup_{0<R\leq 1}\sup_{z\in\R^{N}}
  %   \left[
  %     \sup_{\lam>0}
  %     \lam^{1/q}
  %     \left[\Del_{j}f\chi_{B(z,R)}\right]^{\ast}(\lam)
  %   \right]^{\theta}
  %   \norm{\Del_{j}f\chi_{B(z,R)}\cd L^{\infty}}^{1-\theta}
  %   \\
  %   &\leq
  %   \sup_{0<R\leq 1}\sup_{z\in\R^{N}}
  %   \norm{\Del_{j}f\chi_{B(z,R)}\cd L^{q,\infty}}^{\theta}
  %   \norm{\Del_{j}f\cd L^{\infty}}^{1-\theta}
  %   \\
  %   \eqref{eq:2-2}\quad
  %   &\leq
  %   C2^{jN(1-\theta)/p}
  %   \norm{\Del_{j}f\cd M^{p}_{q,\infty}}
  % \end{align*}
  \begin{align*}
    %R^{\frac{N\theta}{p}-\frac{N\theta}{q}}
    \norm{\Del_{j}f\chi_{B(z,R)}\cd L^{q/l,\infty}}
    &
    %\sup_{\substack{0<R\leq 1\\z\in\R^{N}}}
    % \sup_{\lam>0}
    % \lam^{\theta/p}
    % \left[\Del_{j}f\chi_{B(z,1)}\right]^{\ast}(\lam)
    % =
    \leq
    %\sup_{0<R\leq 1}\sup_{z\in\R^{N}}
    \sup_{\lam>0}
    \lam^{l/q}
    \left[\Del_{j}f\chi_{B(z,R)}\right]^{\ast}(\lam)^{l}
    \left[\Del_{j}f\chi_{B(z,R)}\right]^{\ast}(\lam)^{1-l}
    \\
    &=
    %\sup_{0<R\leq 1}\sup_{z\in\R^{N}}
    \left[
      \sup_{\lam>0}
      \lam^{1/q}
      \left[\Del_{j}f\chi_{B(z,R)}\right]^{\ast}(\lam)
    \right]^{l}
    \norm{\Del_{j}f\chi_{B(z,R)}\cd L^{\infty}}^{1-l}
    \\
    &\leq
    %\sup_{0<R\leq 1}\sup_{z\in\R^{N}}
    \norm{\Del_{j}f\chi_{B(z,R)}\cd L^{q,\infty}}^{l}
    \norm{\Del_{j}f\cd L^{\infty}}^{1-l}
    \\
    \eqref{eq:2-2}\quad
    &\leq
    CR^{\frac{Nl}{q}-\frac{Nl}{p}}
    2^{jN(1-l)/p}
    \norm{\Del_{j}f\cd M^{p}_{q,\infty}}
  \end{align*}
  for all $z\in\R^{N}$ and $R\in\left]0,1\right]$,
  and therefore the estimate
  \begin{align*}
    2^{\left(s-\frac{N(1-l)}{p}\right)j}
    \norm{\Del_{j}f\cd M^{p/l}_{q/l,\infty}}
    \leq
    C2^{sj}
    \norm{\Del_{j}f\cd M^{p}_{q,\infty}}
  \end{align*}
  holds for all $s\in\R$ and $j\in\N$.
    %%%%%%%%%%%%%%%%%%%%%%%%%%%%%%%%%%%%%%%%%%%%%%%%%%%%%
    %%%%%%%%%%%%%%%%%% (q<\infty case) %%%%%%%%%%%%%%%%%%
    %%%%%%%%%%%%%%%%%%%%%%%%%%%%%%%%%%%%%%%%%%%%%%%%%%%%%
    % Next we see $q<\infty$ case (In fact we don't need it in this paper).
    % \begin{align*}
    %   &\norm{\Del_{j}f\chi_{B(z,1)}\cd L^{p/\theta,q/\theta}}
    %   ^{q/\theta}
    %   =
    %   \int_{0}^{\infty}
    %   \left(
    %     \lam^{\frac{\theta}{p}}
    %     \left[\Del_{j}f\chi_{B(z,1)}\right]^{\ast}(\lam)
    %   \right)^{q/\theta}
    %   \,\frac{d\lam}{\lam}
    %   \\
    %   &\leq C
    %   \int_{0}^{\infty}
    %   \left(
    %     \lam^{\frac{1}{p}}
    %     \left[\Del_{j}f\chi_{B(z,1)}\right]^{\ast}(\lam)
    %   \right)^{q}
    %   \,\frac{d\lam}{\lam}
    %   2^{\frac{Nj(1-\theta)q}{\theta p}}
    %   \norm{\Del_{j}f\mid L^{p,q}}^{\frac{(1-\theta)q}{\theta}}
    %   \\
    %   &\leq C2^{\frac{Nj(1-\theta)q}{\theta p}}
    %   \norm{\Del_{j}f\cd L^{p,q}_{\mathrm{ul}}}^{q+\frac{(1-\theta)q}{\theta}}
    % \end{align*}
    %%%%%%%%%%%%%%%%%%%%%%%%%%%%%%%%%%%%%%%%%%%%%%%%%%%%%
    %%%%%%%%%%%%%%%%%%%%%%%%%%%%%%%%%%%%%%%%%%%%%%%%%%%%%
\end{proof}
% \begin{rmk}\label{locMorreyLorentz}
%   Let $1<p<\infty$ and $1\leq q\leq\infty$. Then the following embeddings
%   $$
%     L^{p,q}_{\mathrm{ul}}\subset L^{p,\infty}_{\mathrm{ul}}\subset M^{p}_{Q}
%   $$
%   hold for all $1\leq Q<p$,
%   where $M^{p}_{Q}$ denotes the local Morrey spaces (see Definition 0.1 in \cite{KozonoYamazaki}).
%   % where $M^{p}_{Q}$ denotes the local Morrey spaces which is defined as the set of measurable functions $f:\R^{N}\rightarrow\R$ satisfying
%   % \begin{align*}
%   %   \norm{f\cd M^{p}_{Q}}
%   %   :=
%   %   \sup_{z\in\R^{N}}\sup_{0<R\leq 1}
%   %   R^{\frac{N}{p}-\frac{N}{Q}}
%   %   \norm{f\cd L^{Q}(B(z,R))}
%   %   <\infty.
%   % \end{align*}
%   The first one is a standard fact (see Proposition 1.4.10 in \cite{GrafakosCBook}).
%   The second one is obtained as follows.
%   For fixed $z\in\R^{N}$ and $R\in\left]0,1\right]$,
%   we find out that
%   \begin{align*}
%     R^{\frac{N}{p}-\frac{N}{Q}}
%     \norm{f\chi_{B(z,R)}\cd L^{Q}}
%     &=
%     R^{\frac{N}{p}-\frac{N}{Q}}
%     \norm{f\chi_{B(z,1)}\chi_{B(z,R)}\cd L^{Q}}
%     \\
%     &\leq
%     \sup_{R>0}
%     R^{\frac{N}{p}-\frac{N}{Q}}
%     \norm{f\chi_{B(z,1)}\chi_{B(z,R)}\cd L^{Q}}
%     \left(=\norm{f\chi_{B(z,1)}\cd \cM^{p}_{Q}}\right)
%     \\
%     \left(\text{Lemma 1.7 in \cite{KozonoYamazaki}}\right)\quad
%     &\leq
%     C\norm{f\chi_{B(z,1)}\cd L^{p,\infty}}
%     \leq
%     C\norm{f\chi_{B(z,1)}\cd L^{p,\infty}_{\mathrm{ul}}}.
%   \end{align*}
%   Combining these embeddings with (2.4) in \cite{KozonoYamazaki} also leads Proposition \ref{prop:SE}.
% \end{rmk}

We use the reiteration theorem for Besov spaces in the proof of Proposition \ref{prop3-an}.
\begin{prop}\label{prop:Reint}
  {(reiteration theorem)}

  Fix $1\leq p\leq\infty$, $0<\kappa<1$ and $r, r_{0}, r_{1}\in\left[1,\infty\right]$.
  Let $s, s_{0}, s_{1}$ satisfy $s=(1-\kap)s_{0}+\kap s_{1}$.
  Then we have the formula
  \begin{align*}
    B^{s,r}_{p}
    =\left(B^{s_{0},r_{0}}_{p},B^{s_{1},r_{1}}_{p}\right)_{\kap,r}.
  \end{align*}
  % \textcolor{red}{really?}In addition, assume $1<p<\infty$. Then we have the formula
  % \begin{align*}
  %   B^{s,r}_{p,q,\infty}
  %   =\left(B^{s_{0},r_{0}}_{p,q,\infty},B^{s_{1},r_{1}}_{p,q,\infty}\right)_{\kap,r}.
  % \end{align*}
\end{prop}
\begin{proof}
  See Theorem 6.4.5.(1) in \cite{BLbook}.
  %For the latter, combine Theorem 2.2 in \cite{LRbook} and Definition 4.5 in \cite{LRbook}, where we take $E=L^{p,\infty}_{\mathrm{ul}}$.
\end{proof}

%%%%%%%%%%%%%%%%%%%%%%%%%%%%%%%%%%%%%%%%%%%%%%%%%%%%%%%%%%%%%%%
%%%%%%%%%%%%%%%%%%%%%%%%%%%%%%%%%%%%%%%%%%%%%%%%%%%%%%%%%%%%%%%
\subsection{Decay estimates of $S(t)$}
%%%%%%%%%%%%%%%%%%%%%%%%%%%%%%%%%%%%%%%%%%%%%%%%%%%%%%%%%%%%%%%
%%%%%%%%%%%%%%%%%%%%%%%%%%%%%%%%%%%%%%%%%%%%%%%%%%%%%%%%%%%%%%%
In this subsection, we collect the decay estimates of operators $S(t)$ on our function spaces.
We begin with BLM spaces.
\begin{prop}\label{decaySt}
  Let $1<q\leq p<\infty$, $r\in\left[1,\infty\right]$ and
  $s\geq\sig$.
  Then there exists $C>0$ such that the estimate
  \begin{align}\label{decaySt-1}
    \norm{S(t)f\cd B^{s,r}_{p,q,\infty}}
    \leq
    C\left(1+t^{\frac{-(s-\sig)}{\theta}}\right)
    \norm{f\cd B^{\sig,r}_{p,q,\infty}}
  \end{align}
  holds for all $t>0$ and for all $f\in B^{\sig,r}_{p,q,\infty}$.
  Furthermore, if $s>\sig$, the estimate
  \begin{align}\label{decaySt-2}
    \norm{S(t)f\cd B^{s,1}_{p,q,\infty}}
    \leq
    C\left(1+t^{\frac{-(s-\sig)}{\theta}}\right)
    \norm{f\cd B^{\sig,\infty}_{p,q,\infty}}
  \end{align}
  holds for all $t>0$ and for all $f\in B^{\sig,\infty}_{p,q,\infty}$.
\end{prop}
\begin{proof}
  First we prove \eqref{decaySt-1}.
  It suffices to show that the following inequality
  \begin{align*}
    \norm{\Del_{j}S(t)f\cd M^{p}_{q,\infty}}
    \leq C\left(1+t^{\frac{-(s-\sig)}{\theta}}\right)
    2^{-(s-\sig)j}
    \norm{\Del_{j}f\cd M^{p}_{q,\infty}}
  \end{align*}
  holds for all $j\in\N$.
  The case $j\geq 1$ can be treated in the same way as in the proof of Theorem 2.9 in \cite{KozonoYamazaki}, relying on Proposition \ref{prop:YO-LM} (see also \cite{BST, KS2022, Zhanpeisov23}).
  %For $j\geq 1$, we can prove the inequality in the same way as Theorem 2.9 in \cite{KozonoYamazaki} with the aid of Proposition \ref{prop:YO-LM} (see also \cite{BST, KS2022, Zhanpeisov23}).
  For $j=0$, the inequality follows by combining Proposition \ref{prop:YO-LM} with Lemma 2.1 in \cite{MYZ2008},
  as shown below:
  \begin{align*}
    \norm{\Del_{0}S(t)f\cd M^{p}_{q,\infty}}
    &=\norm{\cF^{-1}\left[\exp\left(-t|\xi|^{\theta}\right)\right]\ast\Del_{0}f
    \cd M^{p}_{q,\infty}}
    \\
    &\leq
    \norm{\cF^{-1}\left[\exp\left(-t|\xi|^{\theta}\right)\right]\cd L^{1}}
    \norm{\Del_{0}f\cd M^{p}_{q,\infty}}
    =C\norm{\Del_{0}f\cd M^{p}_{q,\infty}}.
  \end{align*}
  To obtain \eqref{decaySt-2}, we fix arbitrary $a>s$ and $m\in\N$,
  and perform the following calculation with \eqref{decaySt-1}:
  \begin{align*}
    \norm{S(t)f\cd B^{s,1}_{p,q,\infty}}
    &\leq
    \sum_{j=0}^{m-1}2^{(s-\sig)j}
    \norm{S(t)f\cd B^{\sig,\infty}_{p,q,\infty}}
    +\sum_{j=m}^{\infty}2^{-(a-s)j}
    \norm{S(t)f\cd B^{a,\infty}_{p,q,\infty}}
    \\
    &\leq
    C\left(2^{(s-\sig)m}
    +2^{-(a-s)m}\left(1+t^{-\frac{a-\sig}{\theta}}\right)
    \right)
    \norm{f\cd B^{\sig,\infty}_{p,q,\infty}}.
  \end{align*}
  Optimizing in $m$, we get the claim.
\end{proof}

We confirm the decay estimates of operators $S(t)$ on Lorentz--Morrey spaces.
\begin{prop}\label{decaySt-Lo}
  Let $1<q\leq p<\infty$ and $l\in\left]0,1\right]$.
  Then there exists $C>0$ such that the estimate
  \begin{align}\label{eq:2-6}
    \norm{S(t)f\cd M^{p/l}_{q/l,\infty}}
    \leq
    C\left(
      1+t^{-\frac{N(1-l)}{\theta p}}
    \right)
    \norm{f\cd M^{p}_{q,\infty}}.
  \end{align}
  holds for all $t>0$ and for all $f\in M^{p}_{q,\infty}$.
\end{prop}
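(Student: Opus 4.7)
The plan is to localize at an arbitrary point $z\in\R^{N}$ and bound $\norm{(S(t)f)\chi_{B(z,1)}\cd L^{q,\infty}}$ with constants independent of $z$, by splitting $f=f_{1}+f_{2}$ where $f_{1}:=f\chi_{B(z,R)}$ for a fixed $R$ depending only on $N$ and $f_{2}:=f-f_{1}$. The compactly supported piece $f_{1}$ will be treated via a global Young inequality for Lorentz spaces, while the far-field piece $f_{2}$ will be controlled pointwise by the decay of the heat kernel $G_{\theta}(x,t):=\cF^{-1}[e^{-t|\xi|^{\theta}}](x)$.

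For the near part, covering $B(z,R)$ by $O_{N}(1)$ unit balls and using Definition \ref{def:Lo} gives $\norm{f_{1}\cd L^{p,\infty}(\R^{N})}\leq C\norm{f\cd L^{p,\infty}_{\mathrm{ul}}}$. Then O'Neil's Young inequality for Lorentz spaces in the form $L^{a,1}\ast L^{p,\infty}\subset L^{q,\infty}$ with $1+1/q=1/a+1/p$ (see Theorem~2.6 in \cite{ON}) yields
\begin{align*}
    \norm{S(t)f_{1}\cd L^{q,\infty}}
    \leq C\norm{G_{\theta}(\cdot,t)\cd L^{a,1}}
    \norm{f_{1}\cd L^{p,\infty}}.
\end{align*}
The scaling $G_{\theta}(x,t)=t^{-N/\theta}G_{\theta}(t^{-1/\theta}x,1)$ and Lemma~\ref{lem:Lorentz}(4) give $\norm{G_{\theta}(\cdot,t)\cd L^{a,1}}=Ct^{-(N/\theta)(1/p-1/q)}$, supplying the negative-power term in the claimed bound.

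For the far part, I tile $\{|y-z|\geq R\}$ by unit cubes $\{Q_{k}\}$ with centers $\{z_{k}\}$. Taking $R$ sufficiently large (depending only on $N$), one has $|x-y|\geq |z_{k}-z|/2$ for all $x\in B(z,1)$ and $y\in Q_{k}$. Hölder in Lorentz spaces gives $\int_{Q_{k}}|f(y)|\,dy\leq C\norm{f\cd L^{p,\infty}_{\mathrm{ul}}}$, so the radial monotonicity of $G_{\theta}(\cdot,t)$ together with $\norm{G_{\theta}(\cdot,t)\cd L^{1}}=1$ yields
\begin{align*}
    |S(t)f_{2}(x)|
    \leq C\norm{f\cd L^{p,\infty}_{\mathrm{ul}}}
    \sum_{k}\sup_{y\in Q_{k}}G_{\theta}(x-y,t)
    \leq C\norm{f\cd L^{p,\infty}_{\mathrm{ul}}},
\end{align*}
uniformly in $t>0$, since the discrete sum is comparable to $\int_{|w|\geq R/4}G_{\theta}(w,t)\,dw\leq 1$. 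Consequently $\norm{(S(t)f_{2})\chi_{B(z,1)}\cd L^{q,\infty}}\leq C|B(z,1)|^{1/q}\norm{f\cd L^{p,\infty}_{\mathrm{ul}}}$, contributing the ``$1$'' in the bound. Adding the two contributions and taking the supremum over $z\in\R^{N}$ completes the proof.

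The main technical obstacle I foresee is the uniform-in-$t$ comparison of the far-field discrete sum with the integral of $G_{\theta}$ when $\theta<2$, where the kernel decays only polynomially like $|x|^{-(N+\theta)}$ rather than exponentially. This rests on the pointwise estimate $G_{\theta}(w,t)\leq Ct/(|w|^{N+\theta}+t^{(N+\theta)/\theta})$ (precisely the type of fact the paper relegates to Appendix~\ref{sec-A}) and the radial monotonicity of $G_{\theta}(\cdot,t)$; granted these, the Riemann-sum comparison against $\norm{G_{\theta}(\cdot,t)\cd L^{1}}=1$ is routine and all remaining steps reduce to standard Lorentz-space calculus.
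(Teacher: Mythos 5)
Your proof is correct, but it takes a genuinely different route from the paper's. The paper handles $p=q$ by the uniformly local Young inequality (Proposition \ref{prop:YO-loc}) together with the $t$-independence of $\norm{\cF^{-1}[e^{-t|\xi|^{\theta}}]\cd L^{1}}$, and handles $p<q$ entirely on the frequency side, chaining the embeddings $L^{q,\infty}_{\mathrm{ul}}\supset B^{0}_{(q,\infty),1}$, the Sobolev-type embedding (Proposition \ref{prop:SE}), the Besov--Lorentz smoothing estimate (Proposition \ref{decaySt}) and $L^{p,\infty}_{\mathrm{ul}}\subset B^{0}_{(p,\infty),\infty}$; it thus recycles machinery already built and needs no pointwise information on the kernel beyond its $L^{1}$ norm. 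You instead work in physical space with the near/far decomposition $f=f\chi_{B(z,R)}+f\chi_{B(z,R)^{c}}$, O'Neil's inequality $L^{a,1}\ast L^{p,\infty}\subset L^{q,\infty}$ plus kernel scaling for the near part, and a Riemann-sum comparison for the tail. This is self-contained at the level of real-variable Lorentz calculus and arguably more transparent, but it imports an ingredient the paper never needs, namely the pointwise two-sided bound $G_{\theta}(w,t)\lesssim t\left(|w|^{N+\theta}+t^{(N+\theta)/\theta}\right)^{-1}$: note that radial monotonicity alone gives only $\sup_{Q_{k}}G_{\theta}\lesssim d_{k}^{-N}$, whose sum over unit cubes diverges logarithmically, so the polynomial (or, for $\theta=2$, Gaussian) decay is genuinely what closes the far-field sum uniformly in $t$ --- you correctly flag this, though the relevant fact is the classical Blumenthal--Getoor-type estimate (cf.\ Lemma 2.1 in \cite{MYZ2008}) rather than anything in Appendix \ref{sec-A}, which concerns $\cF^{-1}[\Phi_{(0)}C_{T}]$ and not the heat kernel. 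Two harmless details to tidy: for $N\geq 4$ a unit cube is not contained in a unit ball, so $\int_{Q_{k}}|f|\lesssim\norm{f\cd L^{p,\infty}_{\mathrm{ul}}}$ needs an $O_{N}(1)$ covering; and the $p=q$ endpoint of your near-part estimate is the classical $L^{1}\ast L^{p,\infty}\subset L^{p,\infty}$ (Proposition \ref{prop:YO}) rather than O'Neil's theorem proper.
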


\begin{proof}
  {\bf Case $l<1$.}
  By Propositions \ref{prop:2-2}, \ref{prop:SE} and \ref{decaySt},
  it is seen that
  \begin{align*}
    \norm{S(t)f\cd M^{p/l}_{q/l,\infty}}
    &\leq
    C\norm{S(t)f\cd B^{0,1}_{p/l,q/l,\infty}}
    % =
    % C\norm{S(t)f\cd
    % B^{\frac{N(1-l)}{p}-\frac{N(1-l)}{p},1}_{p/l,q/l,\infty}}
    \leq
    C\norm{S(t)f\cd B^{N(1-l)/p,1}_{p,q,\infty}}
    \\
    &
    \leq
    C\left(
      1+t^{-\frac{N(1-l)}{\theta p}}
    \right)
    \norm{f\cd B^{0,\infty}_{p,q,\infty}}
    \leq
    C\left(
      1+t^{-\frac{N(1-l)}{\theta p}}
    \right)
    \norm{f\cd M^{p}_{q,\infty}}
  \end{align*}
  holds for all $t>0$ and for all $f\in M^{p}_{q,\infty}$.

  \noindent
  {\bf Case $l=1$.}
  %By using Proposition \ref{prop:YO-LM} and Lemma 2.1 in \cite{MYZ2008},
  This is done in the proof of the estimate \eqref{decaySt-1} ($j=0$ case).
  % We gain
  % $\norm{S(t)f\cd M^{p}_{q,\infty}}\leq C\norm{f\cd M^{p}_{q,\infty}}$
  % similarly to the case $j=0$ in the proof of Proposition \ref{decaySt}.
  % \begin{align*}
  %   \norm{S(t)f\cd M^{p}_{q,\infty}}
  %   &=
  %   \norm{\cF^{-1}\left[\exp(-t|\xi|^{\theta})\right]\ast f
  %   \cd M^{p}_{q,\infty}}
  %   \\
  %   &\leq
  %   \norm{\cF^{-1}\left[\exp(-t|\xi|^{\theta})\right]
  %   \cd L^{1}}
  %   \norm{f\cd M^{p}_{q,\infty}}
  %   \\
  %   &=
  %   \norm{\cF^{-1}\left[\exp(-|\xi|^{\theta})\right]
  %   \cd L^{1}}
  %   \norm{f\cd M^{p}_{q,\infty}}
  %   =
  %   C\norm{f\cd M^{p}_{q,\infty}}.
  % \end{align*}
\end{proof}
\begin{rmk}
  One cannot drop ``$1+$'' in \eqref{eq:2-6};
  one arrives at a contradiction by considering $f(x)\equiv 1$ and taking the limit $t\to\infty$,
  for instance.
  %This is a ``cost'' of employing uniformly local function spaces.
  In this article, however, the term ``$1+$'' is negligible,
  as we restrict our attention to local-in-time solvability and make use of the following inequality:
  \begin{align*}
    \left(1+t^{-\frac{N(1-l)}{\theta p}}\right)\chi_{\left]0,T\right[}(t)
    \leq Ct^{-\frac{N(1-l)}{\theta p}}
    \quad\text{for }0<T\leq 1\,\text{ and for }t>0.
  \end{align*}
  %as we restrict our attention to local-in-time solvability and take $t<1$.
\end{rmk}
%%%%%%%%%%%%%%%%%%%%%%%%%%%%%%%%%%%%%%%%%%%%%%%%%%%%%%%%%%%%%%%
%%%%%%%%%%%%%%%%%%%%%%%%%%%%%%%%%%%%%%%%%%%%%%%%%%%%%%%%%%%%%%%
\subsection{An estimate on the inhomogeneous term}
%An estimate on $B^{-\theta}_{(p,\infty),\infty}$
%%%%%%%%%%%%%%%%%%%%%%%%%%%%%%%%%%%%%%%%%%%%%%%%%%%%%%%%%%%%%%%
%%%%%%%%%%%%%%%%%%%%%%%%%%%%%%%%%%%%%%%%%%%%%%%%%%%%%%%%%%%%%%%
% In this subsection, we establish the following proposition that will be used in the proof of Theorem \ref{thm2}.
% The novelty of this paper lies in the following Proposition.
In this subsection,
we establish the following proposition that will be used in the proof of Theorem \ref{thm2};
%therein lies the novelty of this paper.
this constitutes one of the novelties of this paper.
%%%%%%%%%%%%%%%%%%%%%%%%%%%%%%%%%%%%%%%%%%%%%%%%%%%%%%%%%%%%%%%%%%%%%%%%
\begin{prop}\label{prop0}
  Let $T>0$ and $1<q\leq p<\infty$.
  There exists a positive constant $C$ depending on $T$ such that the inequality
  \begin{align*}
    \norm{\mu\cd B^{-\theta,\infty}_{p,q,\infty}}
    \leq C
    \norm{
      \int_{0}^{t}
      S(t-\tau)\mu\,d\tau
    \cd L^{\infty}(0,T;M^{p}_{q,\infty})}
  \end{align*}
  holds for all $\mu\in \left\{f\in\cS'(\R^{N})\cd\int_{0}^{t}
  S(t-\tau)f\,d\tau\in L^{\infty}(0,T;M^{p}_{q,\infty})\right\}$.
\end{prop}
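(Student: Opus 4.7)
The plan is to recover $\mu$ from $v(x,t):=\int_{0}^{t}S(t-\tau)\mu(x)\,d\tau$ by inverting the Duhamel symbol frequency-by-frequency. On the Fourier side,
\[
\cF[v(\cdot,t)](\xi) = \tfrac{1-e^{-t|\xi|^{\theta}}}{|\xi|^{\theta}}\,\cF\mu(\xi),
\]
so at any fixed $t>0$ the distribution $\mu$ is formally recovered from $v(\cdot,t)$ by the Fourier multiplier $|\xi|^{\theta}/(1-e^{-t|\xi|^{\theta}})$. For each $j\in\N$, pick a smooth cutoff $\tilde\phi_{j}$ equal to $1$ on $\mathrm{supp}\,\phi_{j}$ and supported in the enlarged dyadic region of scale $2^{j}$ (an annulus for $j\geq 1$, a ball for $j=0$). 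Using $\tilde\phi_{j}\phi_{j}=\phi_{j}$, one then obtains the identity
\[
\Del_{j}\mu = K_{j,t}\ast\Del_{j}v(\cdot,t),\qquad K_{j,t}:=\cF^{-1}\!\left[\tilde\phi_{j}(\xi)\tfrac{|\xi|^{\theta}}{1-e^{-t|\xi|^{\theta}}}\right],
\]
valid for every $t>0$ at which $v(\cdot,t)\in L^{p,\infty}_{\mathrm{ul}}$.

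Combining this with two applications of Proposition \ref{prop:YO-loc}---one for the convolution above, and one for the standard bound $\norm{\Del_{j}v(\cdot,t)\cd L^{p,\infty}_{\mathrm{ul}}}\leq C\norm{v(\cdot,t)\cd L^{p,\infty}_{\mathrm{ul}}}$---yields
\[
\norm{\Del_{j}\mu\cd L^{p,\infty}_{\mathrm{ul}}} \leq C\norm{K_{j,t}\cd L^{1}}\,\norm{v(\cdot,t)\cd L^{p,\infty}_{\mathrm{ul}}}.
\]
It then suffices, for each $j$, to select a time $t_{j}\in(0,T]$ from the full-measure set on which $\norm{v(\cdot,t_{j})\cd L^{p,\infty}_{\mathrm{ul}}}\leq\norm{v\cd L^{\infty}(0,T;L^{p,\infty}_{\mathrm{ul}})}$, in such a way that $2^{-\theta j}\norm{K_{j,t_{j}}\cd L^{1}}\leq C_{T}$; taking the supremum in $j\in\N$ then delivers the claim. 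For $j\geq 1$, the change of variables $\xi=2^{j}\eta$ gives $\norm{K_{j,t}\cd L^{1}} = 2^{\theta j}\norm{G_{s}\cd L^{1}}$ with $s:=t\,2^{\theta j}$ and $G_{s}:=\cF^{-1}[\tilde\phi_{0}(\eta)|\eta|^{\theta}/(1-e^{-s|\eta|^{\theta}})]$. Picking $t_{j}\sim 2^{-\theta j}T$ confines $s$ to a bounded positive interval, and since $\tilde\phi_{0}$ is then an annular bump supported away from $\eta=0$, the symbols form a uniformly smooth, uniformly compactly supported family in $s$, yielding $\norm{G_{s}\cd L^{1}}\leq C_{T}$ uniformly in $j$.

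I expect the main obstacle to be the low-frequency piece $j=0$ in the subcritical case $\theta<2$. There $\tilde\phi_{(0)}$ contains a neighborhood of the origin, and the expansion $|\xi|^{\theta}/(1-e^{-t|\xi|^{\theta}})=1/t+|\xi|^{\theta}/2+O(|\xi|^{2\theta})$ shows the multiplier is continuous but not $C^{\infty}$ at $\xi=0$, so the routine ``smooth compactly supported symbol has integrable inverse Fourier transform'' argument is unavailable and one must estimate $\cF^{-1}[\tilde\phi_{(0)}|\xi|^{\theta}]$ directly, for instance by splitting off the constant $1/t$ and analysing the remaining Riesz-type piece via its homogeneity---this is exactly the technical step the author announces will be deferred to Appendix \ref{sec-A}. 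When $\theta=2$ the multiplier is in $C_{c}^{\infty}$ and the entire argument is routine.
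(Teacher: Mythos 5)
Your proposal is correct and follows essentially the same route as the paper: invert the Duhamel symbol on each dyadic block, exploit the scaling $t\sim 2^{-\theta j}T$ together with Young's inequality in $L^{p,\infty}_{\mathrm{ul}}$ to get a uniform $L^{1}$ kernel bound, and defer the non-smooth low-frequency multiplier for $\theta<2$ to a separate kernel estimate (the paper's Appendix \ref{sec-A}). The only difference is cosmetic: the paper normalizes by averaging the Duhamel term over the $j$-dependent window $[0,T/2^{\theta j}]$ (so only the essential supremum in $t$ is ever invoked), whereas you evaluate at a single a.e.-chosen time $t_{j}\sim 2^{-\theta j}T$.
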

To the best of the author's knowledge, this estimate seems not to have received attention, even for usual Besov spaces and for $\theta=2$.
We note that the proof below does not depend on the base space $M^{p}_{q,\infty}$ except for the convolution inequality of Young type (Proposition \ref{prop:YO-LM}).
% It can partially be understood in parallel as the forcing term version of characterization result on Besov spaces via fractional Gauss--Weierstrass semi-groups \cite{BST,Bui1984}.

\begin{proof}[Proof of Proposition \ref{prop0}]

  We set
  \begin{align*}
    C_{T}(\xi)=C_{T}^{\theta}(\xi):=
    \left(
      \int_{0}^{T}
      \frac{1-e^{-|\xi|^{\theta}t}}{|\xi|^{\theta}}
      \,dt
    \right)^{-1}
    =
    \frac{|\xi|^{2\theta}}{|\xi|^{\theta}T+e^{-|\xi|^{\theta}T}-1}
  \end{align*}
  and observe that
  \begin{align}\label{eq:0-2}
    C_{T/2^{\theta j}}(\xi)
    =
    2^{2\theta j}C_{T}(2^{-j}\xi)
  \end{align}
  holds for all $T>0$, $j\in\Z$
  and $\xi\in\R^{N}\setminus\{0\}$.
  We calculate as follows
  \begin{align}\label{eq:0-1}
    \cF\left[
      \int_{0}^{t}
      S(t-\tau)\mu\,d\tau
    \right](\xi)
    &=
    \int_{0}^{t}
    e^{-(t-\tau)|\xi|^{\theta}}\cF\mu(\xi)\,d\tau
    % \\
    % &=
    % e^{-t|\xi|^{\theta}}
    % \int_{0}^{t}
    % e^{|\xi|^{\theta}\tau}\,d\tau\,
    % \cF\mu(\xi)
    % \notag
    =
    \frac{1-e^{-|\xi|^{\theta}t}}{|\xi|^{\theta}}
    \cF\mu(\xi).
  \end{align}
  Then we obtain
  \begin{align*}
    \cF\Del_{j}\mu
    &=
    \phi_{j}(\xi)\cF\mu(\xi)\times 1
    =
    \phi_{j}(\xi)\cF\mu(\xi)
    C_{T}(\xi)
      \int_{0}^{T}
      \frac{1-e^{-|\xi|^{\theta}t}}{|\xi|^{\theta}}
      \,dt
    % \\
    % &=
    % C_{T}(\xi)
    % \int_{0}^{T}
    % \Phi_{j}(\xi)\phi_{j}(\xi)
    % \frac{1-e^{-|\xi|^{\theta}t}}{|\xi|^{\theta}}
    % \cF\mu(\xi)
    % \,dt
    \\
    \eqref{eq:0-1}\,\,
    &=
    \Phi_{j}(\xi)
    C_{T}(\xi)
    \int_{0}^{T}
    \phi_{j}(\xi)
    \cF\left[
      \int_{0}^{t}
      S(t-\tau)\mu\,d\tau
    \right](\xi)
    \,dt
  \end{align*}
  for all $j\in\Z_{\geq 1}$.
  Here $\Phi_{j}$ is the same as in the proof of Lemma \ref{lem:SE}.
  Thanks to Proposition \ref{prop:YO-LM}, we see that
  \begin{align*}
    \norm{\Del_{j}\mu\cd M^{p}_{q,\infty}}
    &=
    \norm{
      \cF^{-1}
      \left[
        \Phi_{j}(\xi)C_{T}(\xi)
      \right]
      \ast
      \int_{0}^{T}
      \Del_{j}
      \left[
        \int_{0}^{t}
        S(t-\tau)\mu\,d\tau
      \right]
      \,dt
    \cd M^{p}_{q,\infty}}
    \\
    &\leq
    \norm{
      \cF^{-1}
      \left[
        \Phi_{j}(\xi)C_{T}(\xi)
      \right]
    \cd L^{1}}
    \int_{0}^{T}
    \norm{
      \Del_{j}
      \int_{0}^{t}
      S(t-\tau)\mu\,d\tau
    \cd M^{p}_{q,\infty}}
    \,dt
    \\
    &\leq
    \norm{
      \cF^{-1}
      \left[
        \Phi_{j}(\xi)C_{T}(\xi)
      \right]
    \cd L^{1}}
    \int_{0}^{T}
    \norm{
      \int_{0}^{t}
      S(t-\tau)\mu\,d\tau
    \cd B^{0,\infty}_{p,q,\infty}}
    \,dt
  \end{align*}
  holds for all $T>0$.
  Now we replace $T$ with $T/2^{\theta j}$ to get
  \begin{align}\label{eq:0-5}
    &\norm{\Del_{j}\mu\cd M^{p}_{q,\infty}}\notag
    \\
    &\leq
    \norm{
      \cF^{-1}
      \left[
        \Phi_{j}(\xi)C_{T/2^{\theta j}}(\xi)
      \right]
    \cd L^{1}}
    \int_{0}^{T/2^{\theta j}}
    \norm{
      \int_{0}^{t}
      S(t-\tau)\mu\,d\tau
    \cd B^{0,\infty}_{p,q,\infty}}
    \,dt.
  \end{align}
  Note that
  \begin{align*}
    C^{\sharp}_{T}
    :=T
    \int_{\R^{N}}\left|
    \frac{1}{(2\pi)^{N}}
    \int_{\text{supp}\Phi_{0}}
    \Phi_{0}(\xi)C_{T}(\xi)e^{\sqrt{-1}x\xi}
    \,d\xi
    \right|\,dx
    <+\infty
  \end{align*}
  because $\xi\mapsto\Phi_{0}(\xi)C_{T}(\xi)$ is a smooth function.
  Thus, \eqref{eq:0-2} gives
  \begin{align}\label{eq:0-4}
    \norm{
      \cF^{-1}
      \left[
        \Phi_{j}(\xi)C_{T/2^{\theta j}}(\xi)
      \right]
    \cd L^{1}}
    &=
    2^{2\theta j}
    \norm{
      \cF^{-1}
      \left[
        \Phi_{0}(2^{-j}\xi)C_{T}(2^{-j}\xi)
      \right]
    \cd L^{1}}
    \notag
    \\
    &=
    2^{2\theta j}
    \norm{
      \cF^{-1}
      \left[
        \Phi_{0}(\xi)C_{T}(\xi)
      \right]
    \cd L^{1}}
    =2^{2\theta j}T^{-1}C^{\sharp}_{T}.
  \end{align}
  %By definition of inhomogeneous Besov-Lorentz spaces, the indices $j$ are non-negative.
  Since the indices $j$ are non-negative,
  we deduce that
  \begin{align}\label{eq:0-3}
    \int_{0}^{\frac{T}{2^{\theta j}}}
    \norm{
      \int_{0}^{t}
      S(t-\tau)\mu\,d\tau
    \cd B^{0,\infty}_{p,q,\infty}}
    dt
    &\leq
    \frac{T}{2^{\theta j}}
    \sup_{0\leq t\leq\frac{T}{2^{\theta j}}}
    \norm{
      \int_{0}^{t}
      S(t-\tau)\mu\,d\tau
    \cd B^{0,\infty}_{p,q,\infty}}
    \notag
    \\
    %(j\geq 0\Rightarrow)\quad
    \leq\,
    \frac{T}{2^{\theta j}}
    &
    \sup_{0\leq t\leq T}
    \norm{
      \int_{0}^{t}
      S(t-\tau)\mu\,d\tau
    \cd B^{0,\infty}_{p,q,\infty}}.
  \end{align}
  Plugging \eqref{eq:0-4} and \eqref{eq:0-3} into \eqref{eq:0-5} yields
  \begin{align*}
    2^{-\theta j}\norm{\Del_{j}\mu\cd M^{p}_{q,\infty}}
    &\leq
    C^{\sharp}_{T}
    \sup_{0\leq t\leq T}
    \norm{
      \int_{0}^{t}
      S(t-\tau)\mu\,d\tau
    \cd B^{0,\infty}_{p,q,\infty}}
    \notag
  \end{align*}
  for all $j\in\Z_{\geq 1}$.
  In a similar way, we can estimate $\norm{\Del_{0}\mu\cd M^{p}_{q,\infty}}$ as follows:
  \begin{align*}
    \norm{\Del_{0}\mu\cd M^{p}_{q,\infty}}
    \leq
    T
    \norm{
      \cF^{-1}
      \left[
        \Phi_{(0)}(\xi)C_{T}(\xi)
      \right]
    \cd L^{1}}
    \sup_{0\leq t\leq T}
    \norm{
      \int_{0}^{t}
      S(t-\tau)\mu\,d\tau
    \cd B^{0,\infty}_{p,q,\infty}}.
  \end{align*}
  From Appendix \ref{sec-A}, we have
  \begin{align}\label{eq:0-finite}
    T^{-1}C^{\flat}_{T}
    :=
    \norm{
      \cF^{-1}
      \left[
        \Phi_{(0)}(\xi)C_{T}(\xi)
      \right]
    \mid L^{1}}
    <\infty.
  \end{align}
  Therefore, taking the supremum of $2^{-\theta j}\norm{\Del_{j}\mu\cd M^{p}_{q,\infty}}$ over $j\in\N$ implies
  \begin{align*}
    \norm{\mu\cd B^{-\theta,\infty}_{p,q,\infty}}
    &\leq
    \left(C^{\flat}_{T}+C^{\sharp}_{T}\right)
    \sup_{0\leq t\leq T}
    \norm{
      \int_{0}^{t}
      S(t-\tau)\mu\,d\tau
    \cd B^{0,\infty}_{p,q,\infty}}.
  \end{align*}
  Finally we invoke Proposition \ref{prop:2-2} to reach the desired inequality.
\end{proof}

\begin{rmk}\label{Csharp-deponT}
  \quad
  \begin{enumerate}
    \item Because $C_{T}(\xi)$ has an exceptional point ($\xi=0$), the smoothness $\Phi_{(0)}(\xi)C_{T}(\xi)$ is not trivial. In fact, we can confirm that the one-dimensional function
    \begin{align*}%\label{eq:0-6}
      \R\ni x\mapsto\Psi(x):=\frac{x^{2}}{x+e^{-x}-1}\in\R
    \end{align*}
    belongs to $C^{\infty}(\left[0,\infty\right[)$.
    Thus $\theta\in 2\Z_{\geq 1}$ case are okay, for the map $\xi\mapsto|\xi|^{2k}$ is smooth in $\R_{\xi}^{N}$ for all $k\in\Z_{\geq 1}$ and $C_{T}(\xi)=T^{-2}\Psi(T|\xi|^{2})$
    , but
    the rest cases require additional care (see Appendix \ref{sec-A}).
    \item
    Taking into account \eqref{eq:0-3},
    we cannot obtain homogeneous Besov-type space version of the inequality
    (see also Remark \ref{rmk:a-1}).
    This is consistent with Theorem \ref{thm1} (1) (we note that $B^{-\theta,1}_{p,q,\infty}\setminus\dot{B}^{-\theta,\infty}_{p,q,\infty}\neq\varnothing$).
    %$C|\xi|^{\theta}$ is not an upperbound of $C_{T}(\xi)$ for $\xi\sim 0$.
    \item
    The coefficient $C^{\flat}_{T}+C^{\sharp}_{T}$ appearing in the proof of Proposition \ref{prop0} satisfies
    \begin{align*}
      (0<)\,
      C^{\flat}_{T}+C^{\sharp}_{T}
      \leq
      CT^{-1}\quad\text{for }T\sim 0.
    \end{align*}
  \end{enumerate}
  % Thus, if the existence time get shorter, then the condition on forces becomes looser.
\end{rmk}

%%%%%%%%%%%%%%%%%%%%%%%%%%%%%%%%%%%%%%%%%%%%%%%%%%%%%%%%%%%%%%%%%%%%%%%%
%%%%%%%%%%%%%%%%%%%%%%%%%%%%%%%%%%%%%%%%%%%%%%%%%%%%%%%%%%%%%%%%%%%%
%%%%%%%%%%%%%%%%%%%%%%%%%%%%%%%%%%%%%%%%%%%%%%%%%%%%%%%%%%%%%%%%%%%%
\section{Proofs of main theorems}\label{sec3}
%%%%%%%%%%%%%%%%%%%%%%%%%%%%%%%%%%%%%%%%%%%%%%%%%%%%%%%%%%%%%%%%%%%%
%%%%%%%%%%%%%%%%%%%%%%%%%%%%%%%%%%%%%%%%%%%%%%%%%%%%%%%%%%%%%%%%%%%%
In this section, we show the existence of solution to \eqref{inteq} as a fixed point of the map defined below.
% in Lorentz spaces.
%We carry out partially the same arguments as in \cite{IKT2024}, but we give a full proof for reader's convenience.
To begin with, we fix $T>0$ and $1<q\leq p<\infty$ and we write
\begin{align*}
  X_{T}:=
  \left\{
    u(x,t):\text{Lebesgue measurable in }\R^{N}\times\left]0,T\right[
    \cd
    \norm{u\mid X_{T}}<\infty
  \right\},
\end{align*}
where
\begin{align*}
  \norm{u\mid X_{T}}:=
  \sup_{0<t<T}\norm{u(\cdot,t)\mid M^{p}_{q,\infty}}.
  %\quad
  %\left(p\in\left]1,\infty\right[\right).
\end{align*}
For $u\in X_{T}$ and $\mu\in\cS'(\R^{N})$,
we define operators $I$, $J$ and $\cK$ as follows:
\begin{align*}
  \cK[u]=
  \cK[u;\mu](x,t)
  &=
  I[\mu](x,t)
  +
  J[u](x,t)
  \\
  &:=
  \int_{0}^{t}
  S(t-\tau)\mu(x)
  \,d\tau
  +
  \int_{0}^{t}
  S(t-\tau)
  \left[|u|^{\gam-1}u(\cdot,\tau)\right](x)
  \,d\tau
  \\
  &=
  \int_{0}^{t}
  S(\tau)\mu(x)
  \,d\tau
  +
  \int_{0}^{t}
  S(\tau)
  \left[|u|^{\gam-1}u(\cdot,t-\tau)\right](x)
  \,d\tau.
\end{align*}
We take a constant $M>0$ (which is determined later by \eqref{determine:M} and \eqref{determine:M-2}) and set
\begin{align*}
  B_{M}:=
  \left\{
    u\in X_{T}\mid\norm{u\mid X_{T}}\leq M
  \right\}.
\end{align*}
Then $\left(B_{M},\norm{\cdot -\cdot\cd X_{T}}\right)$ is a complete metric space.
In subsequent subsections, it turns out that the map $\cK:B_{M}\to B_{M}$ is well-defined and is a contraction map.
%%%%%%%%%%%%%%%%%%%%%%%%%%%%%%%%%%%%%%%%%%%%%%%%%%%%%%%%%%%%%%%%%%%%
\subsection{Estimates on $I[\mu]$}
%%%%%%%%%%%%%%%%%%%%%%%%%%%%%%%%%%%%%%%%%%%%%%%%%%%%%%%%%%%%%%%%%%%%
In this subsection,
we show two estimates on the operator $I[\mu]$ which correspond to Theorem \ref{thm1} (\ref{thm1-2}) and \ref{thm1} (\ref{thm1-1}).
These kind of critical estimates were early obtained independently by Meyer \cite{MeyerWavelet} and Yamazaki \cite{Yamazaki2000} in Lorentz spaces.
%, and the estimates we show here may be seen as some variants of their results.
Recently,
in homogeneous Besov spaces $\dot{B}^{s}_{p,\infty}$ \cite{COT2022},
in the predual of (nonlocal) Lorentz--Morrey spaces $\cM^{p}_{q,\infty}$ \cite{Fe2016} or in uniformly local weak Zygmund spaces \cite{IKT2024},
such critical estimate was obtained and played a key role.
\begin{prop}\label{propI-1}
  Fix $0<T<1$, $1<q\leq p<\infty$ and $$r\in\left]\max\left\{\frac{p}{q};\frac{Np}{N+\theta p}\right\},p\right[.$$
  Then
  $I[\mu]$ is a map from $B^{N/r-N/p-\theta,\infty}_{r,rq/p,\infty}$ to $X_{T}$, and its operator norm is finite.
\end{prop}
\begin{proof}
  % We define $k(r)>0$ as $p^{-1}=(2r)^{-1}+(2k(r))^{-1}$.
  % It is easy to see that $1<r<p<k(r)$.
  % Then
  We take $p_{0}\in\left[r,p\right[$ so that
  $q_{0}:=p_{0}q/p>1$ and $2p_{0}>p$ hold.
  Then we put
  $p_{1}:=pp_{0}/(2p_{0}-p)$, $q_{1}:=qp_{0}/(2p_{0}-p)$
  ($\Rightarrow p_{1}\geq q_{1}>1$).
  We let $\lam>0$, set
  \begin{align}\label{eq:3-1}
  	\eta=\eta(\lam):=
  	\lam^{a},\quad
  	a:=
  	\frac{\theta pp_{0}}{2N(p-p_{0})}
  \end{align}
  and decompose $I[\mu]$ as
  \begin{align*}
  	I[\mu](x,t)
  	&=
    \int_{0}^{t}
    S(\tau)\mu(x)\,d\tau
    =
  	\int_{0}^{\infty}
    \chi_{\left]0,t\right[}(\tau)
    S(\tau)\mu(x)
    \,d\tau
  	\\
  	&=
    \left(
      \int_{0}^{\eta}
      +
      \int_{\eta}^{\infty}
    \right)
    \chi_{\left]0,t\right[}(\tau)
    S(\tau)\mu(x)
    \,d\tau
  	=:
  	I_{\lam}^{0}[\mu](x,t)
  	+
  	I_{\lam}^{1}[\mu](x,t).
  \end{align*}
  By Proposition \ref{prop:2-2}, Proposition \ref{decaySt} and \eqref{eq:3-1}, we have
\begin{align*}
	&\norm{I_{\lam}^{0}[\mu](x,t)\cd M^{p_0}_{q_0,\infty}}
	\leq
	C\int_{0}^{\eta}
  \chi_{\left]0,t\right[}(\tau)
  \norm{S(\tau)\mu(x)\cd B^{0,1}_{p_0,q_0,\infty}}
  \,d\tau
	\\
  &\leq
	C\int_{0}^{\eta}
  \tau^{-1+\frac{N}{\theta p_0}-\frac{N}{\theta p}}
  \,d\tau
  \norm{\mu\cd B^{\frac{N}{p_0}-\frac{N}{p}-\theta,\infty}_{p_0,q_0,\infty}}
  =
	C\lam^{1/2}
	\norm{\mu\cd B^{\frac{N}{p_0}-\frac{N}{p}-\theta,\infty}_{p_0,q_0,\infty}}
\end{align*}
and
\begin{align*}
	&\norm{I_{\lam}^{1}[\mu](x,t)\cd M^{p_1}_{q_1,\infty}}
	\leq
	C\int_{\eta}^{\infty}
  \chi_{\left]0,t\right[}(\tau)
  \norm{S(\tau)\mu(x)\cd B^{0,1}_{p_1,q_1,\infty}}
  \,d\tau
	\\
  &\leq
	C\int_{\eta}^{\infty}
  \tau^{-1+\frac{N}{\theta p_1}-\frac{N}{\theta p}}
  \,d\tau
  \norm{\mu\cd B^{\frac{N}{p_1}-\frac{N}{p}-\theta}_{p_1,q_1,\infty}}
  =
	C\lam^{-1/2}
	\norm{\mu\cd B^{\frac{N}{p_1}-\frac{N}{p}-\theta}_{p_1,q_1,\infty}}.
\end{align*}
Now we use Proposition \ref{cor-LRI} in the form
$M^{p}_{q,\infty}
\supset\left(M^{p_0}_{q_0,\infty},M^{p_1}_{q_1,\infty}\right)_{1/2,\infty}$
% Now we use real interpolation
% $L^{p,\infty}
% =
% \left(L^{r,\infty},L^{k(r),\infty}\right)_{1/2,\infty}$
to derive
\begin{align*}
	\norm{I[\mu](x,t)\cd M^{p}_{q,\infty}}
  &\leq
  C\sup_{\lam>0}
  \lam^{-1/2}
  \inf_{I[\mu]=I_{0}+I_{1}}
  \left\{
		\norm{I_{0}\cd M^{p_0}_{q_0,\infty}}
		+
		\lam\norm{I_{1}\cd M^{p_1}_{q_1,\infty}}
	\right\}
  \\
  &\leq
	C\sup_{\lam>0}
	\left\{
		\lam^{-1/2}
    \norm{I_{\lam}^{0}[\mu]\cd M^{p_0}_{q_0,\infty}}
		+
		\lam^{1/2}
    \norm{I_{\lam}^{1}[\mu]\cd M^{p_1}_{q_1,\infty}}
	\right\}
	\\
	&\leq
	C\left\{
    \norm{\mu\cd B^{\frac{N}{p_0}-\frac{N}{p}-\theta,\infty}_{p_0,q_0,\infty}}
    +
    \norm{\mu\cd B^{\frac{N}{p_1}-\frac{N}{p}-\theta,\infty}_{p_1,q_1,\infty}}
  \right\}
  \\
  &\leq
  C\norm{\mu\cd B^{\frac{N}{p_0}-\frac{N}{p}-\theta,\infty}_{p_0,q_0,\infty}}
  \leq
  C\norm{\mu\cd B^{\frac{N}{r}-\frac{N}{p}-\theta,\infty}_{r,\frac{rq}{p},\infty}}
\end{align*}
for all $t<T<1$.
In the last line, we used Proposition \ref{prop:SE}.
% Taking sup in $t$,
% \begin{align*}
% 	\sup_{0<t<T}\norm{I[\mu]\mid L^{p,\infty}_{\mathrm{ul}}}
% 	\leq
% 	C\norm{\mu\cd B^{-\theta+\frac{N}{r}-\frac{N}{p}}_{(r,\infty),\infty}}.
% \end{align*}
\end{proof}
\begin{prop}\label{prop3-an}
  Fix $0<T<1$.
  Let $1<q\leq p<\infty$.
  Then
  $I[\mu]$ is a map from $B^{-\theta,1}_{p,q,\infty}$ to $X_{T}$,
  and its operator norm is finite.
\end{prop}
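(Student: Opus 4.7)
The plan is to pass through the embedding $B^{0}_{(p,\infty),1} \subset L^{p,\infty}_{\mathrm{ul}}$ given by Proposition \ref{prop:2-2}, interchange the time integral with the Littlewood--Paley sum via Minkowski's integral inequality, and then exploit the exponential decay of the heat semigroup on each dyadic block. The crucial point is that a direct appeal to Proposition \ref{decaySt} with $s=0$, $\sig=-\theta$ gives the coefficient $1+\tau^{-1}$, which is \emph{not} integrable on $(0,t)$. The resolution is to perform the decay estimate block by block, where one gains a factor $e^{-c\tau 2^{\theta j}}$ that integrates to produce exactly the weight $2^{-\theta j}$ matching the definition of $B^{-\theta}_{(p,\infty),1}$.

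More concretely, first I would write, using Proposition \ref{prop:2-2},
\begin{align*}
  \norm{I[\mu](\cdot,t)\cd L^{p,\infty}_{\mathrm{ul}}}
  \leq C\sum_{j\in\N}
  \norm{\Del_{j}I[\mu](\cdot,t)\cd L^{p,\infty}_{\mathrm{ul}}}.
\end{align*}
Since $\Del_{j}$ commutes with $S(\tau)$, Minkowski's integral inequality yields
\begin{align*}
  \norm{\Del_{j}I[\mu](\cdot,t)\cd L^{p,\infty}_{\mathrm{ul}}}
  \leq
  \int_{0}^{t}
  \norm{S(\tau)\Del_{j}\mu\cd L^{p,\infty}_{\mathrm{ul}}}\,d\tau.
\end{align*}
Using the identities $\phi_{j}\equiv\Phi_{j}\phi_{j}$ and $\phi_{(0)}\equiv\Phi_{(0)}\phi_{(0)}$ from the proof of Proposition \ref{prop:SE}, one writes $S(\tau)\Del_{j}\mu=\cF^{-1}[e^{-\tau|\xi|^{\theta}}\Phi_{j}(\xi)]\ast\Del_{j}\mu$, and Proposition \ref{prop:YO-loc} gives
\begin{align*}
  \norm{S(\tau)\Del_{j}\mu\cd L^{p,\infty}_{\mathrm{ul}}}
  \leq
  \norm{\cF^{-1}[e^{-\tau|\xi|^{\theta}}\Phi_{j}(\xi)]\cd L^{1}}
  \norm{\Del_{j}\mu\cd L^{p,\infty}_{\mathrm{ul}}}.
\end{align*}

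Next I would invoke the standard heat-kernel bounds (a scaling $\xi\mapsto 2^{j}\xi$ together with $\mathrm{supp}\,\Phi_{0}$ being contained in an annulus bounded away from the origin):
\begin{align*}
  \norm{\cF^{-1}[e^{-\tau|\xi|^{\theta}}\Phi_{j}(\xi)]\cd L^{1}}
  \leq Ce^{-c\tau 2^{\theta j}}\quad\text{for }j\geq 1,
\end{align*}
together with the uniform bound $\norm{\cF^{-1}[e^{-\tau|\xi|^{\theta}}\Phi_{(0)}]\cd L^{1}}\leq C$ for the low-frequency piece. Integrating in $\tau\in(0,t)\subset(0,T)\subset(0,1)$ then produces
\begin{align*}
  \int_{0}^{t}
  \norm{S(\tau)\Del_{j}\mu\cd L^{p,\infty}_{\mathrm{ul}}}\,d\tau
  \leq C 2^{-\theta j}
  \norm{\Del_{j}\mu\cd L^{p,\infty}_{\mathrm{ul}}}\quad(j\geq 1),
\end{align*}
while the $j=0$ term is controlled by $CT\norm{\Del_{0}\mu\cd L^{p,\infty}_{\mathrm{ul}}}\leq C\norm{\Del_{0}\mu\cd L^{p,\infty}_{\mathrm{ul}}}$. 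Summing over $j\in\N$ gives exactly $\norm{\mu\cd B^{-\theta}_{(p,\infty),1}}$, which is the desired bound uniform in $t\in(0,T)$.

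The main obstacle is circumventing the non-integrable singularity $\tau^{-1}$ produced by the lossy semigroup estimate in Proposition \ref{decaySt}; doing so requires keeping the Littlewood--Paley decomposition intact throughout, rather than applying Besov-level estimates after integration. Once this is recognized, the rest is mechanical bookkeeping. A secondary technical point is that the use of Minkowski's integral inequality in $L^{p,\infty}_{\mathrm{ul}}$ relies on the fact that for $p\in\left]1,\infty\right[$ we may treat this quasi-normed space as a Banach space with an equivalent norm (cf.\ Remark \ref{rmk2-1} and the subsequent remark).
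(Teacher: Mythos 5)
Your proof is correct, but the decisive step is carried out by a genuinely different route than the paper's. Both arguments reduce, via Proposition \ref{prop:2-2} and Proposition \ref{prop:YO-loc}, to a block-wise gain of $2^{-\theta j}$; you obtain it from the pointwise-in-time exponential decay $\norm{\cF^{-1}[e^{-\tau|\xi|^{\theta}}\Phi_{j}]\cd L^{1}}\leq Ce^{-c\tau 2^{\theta j}}$ (a standard Bernstein-type estimate for frequencies localized in the annulus $2^{j}\,\mathrm{supp}\,\Phi_{0}$, valid for all $0<\theta\leq 2$ because the annulus avoids the origin) and then integrate in $\tau$. The paper instead keeps the time integral inside the convolution and proves $\norm{\int_{0}^{t}S(\tau)[\Psi_{j}]\,d\tau\cd L^{1}}\leq C2^{-\theta j}$ by relating this $L^{1}$ norm to the $B^{\theta}_{1,\infty}$ norm of the integrated kernel and then invoking the reiteration theorem (Proposition \ref{prop:Reint}) together with the lossy semigroup bounds of Proposition \ref{decaySt}, splitting the $\tau$-integral at $\lam^{1/2r}$ to absorb the non-integrable singularity $\tau^{-1}$ that you also identify. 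Your route is shorter and more elementary, at the price of importing the annulus decay lemma, which the paper never states (though it is classical, cf.\ Lemma 2.4 of \cite{BCDbook} for $\theta=2$); the paper's route uses only estimates already established in the text. Two small points: your use of Minkowski's integral inequality in $L^{p,\infty}_{\mathrm{ul}}$ is legitimate for $p>1$ via the equivalent norm of Remark \ref{rmk2-1} (the paper sidesteps this by convolving $\Del_{j}\mu$ with the time-integrated kernel and applying Young only once), and for the low-frequency block one should factor $\cF^{-1}[e^{-\tau|\xi|^{\theta}}\Phi_{(0)}]=\cF^{-1}[e^{-\tau|\xi|^{\theta}}]\ast\Psi_{(0)}$ and appeal to Lemma 2.1 of \cite{MYZ2008}, since $|\xi|^{\theta}$ is not smooth at the origin when $\theta<2$; both are minor.
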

\begin{proof}
  We set $\Psi_{j}(x):=
  \cF^{-1}\left[\Phi_{j}\right](x)$ for $j\in\Z_{\geq 1}$ and $\Psi_{0}(x):=
  \cF^{-1}\left[\Phi_{(0)}\right](x)$,
  % \begin{align*}
  %   \Psi_{j}(x):=
  %   \cF^{-1}\left[\Phi_{j}\right](x)
  %   \,\,\text{for }j\in\Z,
  %   \quad
  %   \Psi_{(0)}(x):=
  %   \cF^{-1}\left[\Phi_{(0)}\right](x)
  % \end{align*}
  % ,
  where $\Phi_j$ and $\Phi_{(0)}$ are the same as in the proof of Lemma \ref{lem:SE}.
  Let $t\in\left]0,T\right[$.
  By using Proposition \ref{prop:YO-LM} and \ref{prop:2-2}, we have
  \begin{align}\label{eq:prop3-an-1}
    &\norm{\int_{0}^{t}S(t-\tau)\mu(x)\,d\tau\cd M^{p}_{q,\infty}}
    % \leq C
    % \norm{\int_{0}^{t}S(\tau)\mu(x)\,d\tau\cd B^{0,1}_{p,q,\infty}}
    % \notag
    \leq C
    \sum_{j\in\N}
    \norm{\Del_{j}\int_{0}^{t}S(\tau)\mu(x)\,d\tau\cd M^{p}_{q,\infty}}
    \notag
    \\
    &=C
    \sum_{j\in\N}
    \norm{
    \int_{0}^{t}S(\tau)
    \left[\Psi_{j}\right]
    (x)\,d\tau
    \ast
    \Del_{j}\mu
    \cd M^{p}_{q,\infty}}
    \notag
    \\
    &\leq C
    \sum_{j\in\N}
    \norm{\int_{0}^{t}S(\tau)
    \left[\Psi_{j}\right]
    (x)\,d\tau
    \cd L^{1}}
    \norm{
    \Del_{j}\mu
    \cd M^{p}_{q,\infty}}.
  \end{align}
  Because $\mathrm{supp}\,\Phi_{j}\subset\mathrm{supp}\left[\sum_{l=-2}^{2}\phi_{j+l}\right]$ holds for $j\in\Z_{\geq 1}$,
  we have
  \begin{align*}
    &\norm{\int_{0}^{t}S(\tau)
    \left[\Psi_{j}\right]
    (x)\,d\tau
    \cd L^{1}}
    \leq
    \sum_{l=-2}^{2}
    2^{-\theta(j+l)}2^{\theta(j+l)}
    \norm{\Del_{j+l}
    \int_{0}^{t}
    S(\tau)\left[\Psi_{j}\right](x)
    \,d\tau
    \cd L^{1}}
    \\
    &\leq
    5\cdot2^{2\theta}
    2^{-\theta j}
    \max_{-2\leq l\leq 2}
    2^{\theta(j+l)}
    \norm{\Del_{j+l}
    \int_{0}^{t}
    S(\tau)\left[\Psi_{j}\right](x)
    \,d\tau
    \cd L^{1}}
    \\
    &\leq C
    2^{-\theta j}
    \norm{
    \int_{0}^{t}
    S(\tau)\left[\Psi_{j}\right](x)
    \,d\tau
    \cd B^{\theta,\infty}_{1}}
    \\
    &\leq C
    2^{-\theta j}
    \norm{
    \int_{0}^{\infty}
    \chi_{\left]0,t\right[}(\tau)
    S(\tau)\left[\Psi_{j}\right](x)
    \,d\tau
    \cd\left(B^{\theta-\theta r,\infty}_{1},
    B^{\theta+\theta r,\infty}_{1}\right)_{1/2,\infty}}.
  \end{align*}
  Here we wrote $\Del_{j}:=\Del_{0}$ and $\Psi_{j}:=\Psi_{0}$ for $j<0$,
  and we used Proposition \ref{prop:Reint} in the last
  % in the form
  % $B^{\theta,\infty}_{1}=\left(B^{\theta-\theta r,\infty}_{1},
  % B^{\theta+\theta r,\infty}_{1}\right)_{1/2,\infty}$
  for sufficiently small $r>0$.
  Because $\norm{
  \Psi_{j}
  \cd L^{1}}$ is independent of $j\in\N$,
  the decay estimate of $S(\tau)$ on the Besov spaces \cite[Theorem 3]{BST} reveals that
  \begin{align*}
    &\norm{\int_{0}^{t}S(\tau)
    \left[\Psi_{j}\right]
    (x)\,d\tau
    \cd L^{1}}
    \\
    &\leq C2^{-\theta j}
    \sup_{\lam>0}
    \lam^{-1/2}
    \bigg\{
      \int_{0}^{\lam^{1/2r}}
      \chi_{\left]0,t\right[}(\tau)
      \left(1+\tau^{r-1}\right)
      \norm{
      \Psi_{j}
      \cd B^{0,\infty}_{1}}
      \,d\tau
      \\
      &\qquad\qquad\qquad\qquad\quad
      +\lam
      \int_{\lam^{1/2r}}^{\infty}
      \chi_{\left]0,t\right[}(\tau)
      \left(1+\tau^{-1-r}\right)
      \norm{
      \Psi_{j}
      \cd B^{0,\infty}_{1}}
      \,d\tau
    \bigg\}
    \\
    &\leq C2^{-\theta j}
    \sup_{\lam>0}
    \lam^{-1/2}
    \bigg\{
      \int_{0}^{\lam^{1/2r}}
      \tau^{r-1}
      \,d\tau
      +\lam
      \int_{\lam^{1/2r}}^{\infty}
      \tau^{-1-r}
      \,d\tau
    \bigg\}
    \norm{
    \Psi_{j}
    \cd L^{1}}
    =
    C2^{-\theta j}
  \end{align*}
  holds for each $j\in\N$.
  Plugging this estimate in \eqref{eq:prop3-an-1} and taking the supremum over $t\in\left]0,T\right[$ yield the claim.
  % \begin{align*}
  %   \norm{\int_{0}^{t}S(t-\tau)\mu(x)\,d\tau\cd L^{p,\infty}}
  %   &\leq C
  %   \sum_{j\in\N}
  %   2^{-\theta j}
  %   \norm{
  %   \Del_{j}\mu
  %   \cd L^{p,\infty}}
  %   =
  %   C\norm{\mu\cd B^{-\theta}_{(p,\infty),1}}.
  % \end{align*}
\end{proof}
%%%%%%%%%%%%%%%%%%%%%%%%%%%%%%%%%%%%%%%%%%%%%%%%%%%%%%%%%%%%%%%%%%%%
\subsection{Contraction mapping argument}
%%%%%%%%%%%%%%%%%%%%%%%%%%%%%%%%%%%%%%%%%%%%%%%%%%%%%%%%%%%%%%%%%%%%
In this section, we deal with $J[u]$ and complete the proof of main theorems.
\begin{lem}\label{lem3-1}
  Fix $0<T<1$.
  Assume that $\mu$ satisfies $I[\mu]\in B_{M/2}$ and $(p,q)$ satisfies
  $\gam<q\leq p$ and $p\geq{N(\gam-1)}/{\theta}$.
  % Assume that $\mu$ satisfies $I[\mu]\in B_{M/2}$ and $p$ satisfies
  % \begin{align*}
  %   p>\max\left\{\gam;\frac{N(\gam-1)}{\theta}\right\}
  %   \quad
  %   \text{or}\quad
  %   p=\frac{N(\gam-1)}{\theta}>\gam
  % \end{align*}
  Then the operator $u\mapsto\cK[u;\mu]$ is a map from $B_{M}$ to $B_{M}$.
\end{lem}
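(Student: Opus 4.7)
The proof reduces immediately to bounding $J[u]$ in $X_T$: since $\cK[u] = I[\mu] + J[u]$ and the hypothesis supplies $\|I[\mu]\mid X_T\|\leq M/2$, the task is to exhibit $T$ and $M$ making $\|J[u]\mid X_T\|\leq M/2$ for every $u\in B_M$.

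The first move is to pull the $L^{p,\infty}_{\mathrm{ul}}$ norm inside the Duhamel integral by Minkowski, then apply Proposition \ref{decaySt-Lo} with source index $p/\gam$ and target index $p$. The conditions of that proposition, namely $1<p/\gam\leq p<\infty$, are precisely encoded by the hypotheses $p>\gam$ and $\gam>1$. Using Lemma \ref{lem:Lorentz} (\ref{lem:Lorentz-2})--(\ref{lem:Lorentz-3}) to identify $\||u|^{\gam-1}u(\cdot,\tau)\mid L^{p/\gam,\infty}_{\mathrm{ul}}\|=\|u(\cdot,\tau)\mid L^{p,\infty}_{\mathrm{ul}}\|^{\gam}\leq M^{\gam}$, one arrives at the bound
\[
  \|J[u](\cdot,t)\mid L^{p,\infty}_{\mathrm{ul}}\|
  \leq CM^{\gam}\int_{0}^{t}\bigl(1+(t-\tau)^{-N(\gam-1)/(\theta p)}\bigr)\,d\tau.
\]

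In the strictly subcritical regime $p>N(\gam-1)/\theta$ the singular exponent lies in $(-1,0)$, the integral is controlled by $C(T+T^{1-N(\gam-1)/(\theta p)})$, and taking $T<1$ gives $\|J[u]\mid X_T\|\leq C'M^{\gam}T^{\delta}$ with $\delta:=1-N(\gam-1)/(\theta p)>0$. I would then fix $M$ (consistent with the bound on $\|I[\mu]\mid X_T\|$ supplied by Proposition \ref{propI-1} or Proposition \ref{prop3-an}) and take $T$ so small that $C'M^{\gam-1}T^{\delta}\leq 1/2$, which closes $\|J[u]\mid X_T\|\leq M/2$.

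The main obstacle is the scale-invariant case $p=N(\gam-1)/\theta$ (permissible only when $\gam>N/(N-\theta)$ owing to $p>\gam$): the exponent above is exactly $-1$, the naive integral diverges, and no positive power of $T$ is available. The remedy is to repeat the real-interpolation splitting already used in the proof of Proposition \ref{propI-1}: decompose $J[u]$ at a time threshold $\eta=\lam^{a}$ into a short-time piece and a long-time piece, estimate them in $L^{r,\infty}_{\mathrm{ul}}$ and $L^{k,\infty}_{\mathrm{ul}}$ respectively for $1<r<p<k<\infty$ chosen so that the exponents produced by Proposition \ref{decaySt-Lo} stay strictly above $-1$ on one side and strictly below $-1$ on the other and so that $p^{-1}=\tfrac12 r^{-1}+\tfrac12 k^{-1}$, then recover the $L^{p,\infty}_{\mathrm{ul}}$ norm from the interpolation couple via Corollary \ref{cor-LRI}. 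In this regime the smallness must come from $M$ rather than $T$, and the self-mapping $\cK\colon B_M\to B_M$ closes by imposing $CM^{\gam-1}\leq 1/2$, a constraint compatible with the smallness built into the hypothesis $I[\mu]\in B_{M/2}$.
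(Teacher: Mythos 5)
Your proposal is correct and follows essentially the same route as the paper's proof: a direct application of Proposition \ref{decaySt-Lo} together with the identity $\norm{|u|^{\gam-1}u\cd L^{p/\gam,\infty}_{\mathrm{ul}}}=\norm{u\cd L^{p,\infty}_{\mathrm{ul}}}^{\gam}$ when $p>\frac{N(\gam-1)}{\theta}$ (smallness from $T$), and the real-interpolation time-splitting recovered through Corollary \ref{cor-LRI} in the borderline case $p=\frac{N(\gam-1)}{\theta}$ (smallness from $M$ via $CM^{\gam}\leq M/2$). There are no substantive differences.
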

% In the following proof, we divide the case into $p>{N(\gam-1)}/{\theta}$ and $p={N(\gam-1)}/{\theta}$.
% While they can be discussed jointly with minor adjustments, we separate them here for simplicity.
\begin{proof}
{\bf Case $p>{N(\gam-1)}/{\theta}$.}
  Take arbitrary $u\in B_{M}$.
  By utilizing Proposition \ref{decaySt-Lo} and Lemma \ref{lem:LM} (\ref{lem:LM-2}), we have
  \begin{align*}
    \norm{J[u](\cdot,t)\cd M^{p}_{q,\infty}}
    &\leq C
    \int_{0}^{t}
    \norm{S(t-\tau)\left[|u|^{\gam-1}u(\cdot,\tau)\right]\cd M^{p}_{q,\infty}}
    \,d\tau
    \\
    &\leq C
    \int_{0}^{t}
    \left\{1+(t-\tau)^{-\frac{N(\gam-1)}{\theta p}}\right\}
    \norm{|u|^{\gam-1}u(\cdot,\tau)\cd M^{p/\gam}_{q/\gam,\infty}}
    \,d\tau
    \\
    % &\leq C
    % t^{1-\frac{N(\gam-1)}{\theta p}}
    % \sup_{0<t<T}
    % \norm{u\cd M^{p}_{q,\infty}}^{\gam}
    &\leq C
    T^{1-\frac{N(\gam-1)}{\theta p}}M^{\gam}
    \leq M/2
  \end{align*}
  for all $0<t<T<1$. This gives $\cK[u]\in B_{M}$.

  \noindent
  {\bf Case $p={N(\gam-1)}/{\theta}$.}
%\underline{\bf $p=\frac{N(\gam-1)}{\theta}$ case.}
%Set $p=\frac{N(\gam-1)}{\theta}(>\gam)$.
  %Let $p$ as $(1<)\gam\leq p<\infty$.
%{\bf Estimate on $I[\mu]$.}
%{\bf Estimate on $J[u]$.}
Fix any $u\in B_{M}$.
We take $r>0$ and $k(r)>0$ such that
\begin{align}\label{eq:3-2}
  &1<p/\gam<r<p=\frac{N(\gam-1)}{\theta}<k(r)
  \quad\text{and}\quad
  \frac{1}{p}
  =
  \frac{1/2}{r}+\frac{1/2}{k(r)}.
  \\
  &\left(\text{Then inequalities}\quad
  1<\frac{q}{\gam}<\frac{q}{p}r<q<\frac{q}{p}k(r)
  \quad\text{hold automatically.}
  \right)\notag
\end{align}
%このような$\rho$は$\gam\geq\frac{N}{N-\theta}$であれば存在する．$p/\gam>1$が狭義不等式で成り立たないと，$L^{1,\infty}$が出てきてしまう(1,\inftyはBanachにならない．)
Then
we let $\lam>0$, set
\begin{align}\label{eq:3-3}
  \eta=\eta(\lam):=
  \lam^{b},\quad
  b:=
  \frac{\theta rk(r)}{N(k(r)-r)}
  %\frac{\theta p\rho}{2(\theta p\rho+Np-N\gam\rho)}
  % =
  % \frac{\theta\rho(\gam-1)}{2(N(\gam-1)-\theta\rho)}
\end{align}
%bは，$\rho<p=\frac{N(\gam-1)}{\theta}$であればwell-defined(分母ノンゼロ).
and decompose $J[u]$ as
\begin{align*}
  J[u](x,t)
  &=
  \int_{0}^{\infty}
  \chi_{\left]0,t\right[}(\tau)
  S(\tau)
  \left[|u|^{\gam-1}u(\cdot,t-\tau)\right](x)
  \,d\tau
  \\
  &=
  \left(
    \int_{0}^{\eta}
    +
    \int_{\eta}^{\infty}
  \right)
  \chi_{\left]0,t\right[}(\tau)
  S(\tau)
  \left[|u|^{\gam-1}u(\cdot,t-\tau)\right](x)
  \,d\tau
  \\
  &=:
  J_{\lam}^{0}[u](x,t)
  +
  J_{\lam}^{1}[u](x,t).
\end{align*}
By using Proposition \ref{decaySt-Lo}, \eqref{eq:3-2} and \eqref{eq:3-3}, we have
\begin{align*}
\norm{J_{\lam}^{0}[u]\cd M^{r}_{qr/p,\infty}}
&\leq
\int_{0}^{\eta}
\chi_{\left]0,t\right[}(\tau)
\norm{
S(\tau)
\left[|u|^{\gam-1}u(\cdot,t-\tau)\right](x)
\cd M^{r}_{qr/p,\infty}}
\,d\tau
\\
&\leq
C\int_{0}^{\eta}
\left(1+
\tau^{\frac{N}{\theta}\left(\frac{1}{r}-\frac{\gam}{p}\right)}
\right)
\chi_{\left]0,t\right[}(\tau)
\norm{
|u|^{\gam-1}u(\cdot,t-\tau)
\cd M^{p/\gam}_{q/\gam,\infty}}
d\tau
\\
&=
C\eta^{\frac{N}{\theta}\left(\frac{1}{r}-\frac{\gam}{p}\right)+1}
\sup_{0<t<T}\norm{u(\cdot,t)\cd M^{p}_{q,\infty}}^{\gam}
% \leq
% C\eta^{\frac{N}{\theta}\left(\frac{1}{\rho}-\frac{\gam}{p}\right)+1}
% M^{\gam}
\leq
C\lam^{1/2}M^{\gam},
\end{align*}
and similarly we have
\begin{align*}
  \norm{J_{\lam}^{1}[u]\cd M^{k(r)}_{{qk(r)}/{p},\infty}}
  % &\leq
  % \int_{\eta}^{\infty}
  % \chi_{\left]0,t\right[}(\tau)
  % \norm{
  % S(\tau)
  % \left[|u|^{\gam-1}u(\cdot,t-\tau)\right](x)
  % \cd M^{k(r)}_{qk(r)/p,\infty}}
  % \,d\tau
  % \\
  &\leq
  C\int_{\eta}^{\infty}
  \left(1+\tau\right)
  ^{\frac{N}{\theta}\left(\frac{1}{k(r)}-\frac{\gam}{p}\right)}
  \chi_{\left]0,t\right[}(\tau)
  \norm{
  |u|^{\gam}(\cdot,t-\tau)
  \cd M^{p/\gam}_{q/\gam,\infty}}
  \,d\tau
  \\
  &=
  C\eta^{\frac{N}{\theta}\left(\frac{1}{k(r)}-\frac{\gam}{p}\right)+1}
  \sup_{0<t<T}
  \norm{u(\cdot,t)\cd M^{p}_{q,\infty}}^{\gam}
  % \leq
  % C\eta^{\frac{N}{\theta}\left(\frac{1}{k(\rho)}-\frac{\gam}{p}\right)+1}
  % M^{\gam}
  % \\
  \leq
  C\lam^{-1/2}M^{\gam}.
\end{align*}
Proposition \ref{cor-LRI} in the form
$M^{p}_{q,\infty}\supset\left(M^{r}_{qr/p,\infty},M^{k(r)}_{qk(r)/p,\infty}\right)_{1/2,\infty}$
implies that
\begin{align}\label{determine:M}
	\norm{J[u](\cdot,t)\cd M^{p}_{q,\infty}}
  &\leq
  C\sup_{\lam>0}
  \lam^{-1/2}
  \inf_{J[\mu]=J_{0}+J_{1}}
  \left\{
		\norm{J_{0}\cd M^{r}_{qr/p,\infty}}
		+
		\lam\norm{J_{1}\cd M^{k(r)}_{qk(r)/p,\infty}}
	\right\}
  \notag
  \\
  &\leq
	C\sup_{\lam>0}
	\left\{
		\lam^{-1/2}
    \norm{J_{\lam}^{0}[u]\cd M^{r}_{qr/p,\infty}}
		+
		\lam^{1/2}
    \norm{J_{\lam}^{1}[u]\cd M^{k(r)}_{qk(r)/p,\infty}}
	\right\}
  \notag
	\\
	&\leq
	CM^{\gam}
  \leq M/2.
\end{align}
Thus we have
\begin{align*}
	\sup_{0<t<T}\norm{J[u](\cdot,t)\cd M^{p}_{q,\infty}}
	\leq\frac{M}{2}
\end{align*}
and therefore we find out $\cK[u]\in B_{M}$.
\end{proof}

\begin{lem}\label{lem3-2}
  Fix $0<T<1$.
  Assume that $\mu$ and $(p,q)$ satisfy the same conditions as in Lemma \ref{lem3-1}.
  Then the map
  $\cK:B_{M}\to B_{M}$, $u\mapsto\cK[u;\mu]$ is a contraction map.
\end{lem}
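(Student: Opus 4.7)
The plan is to exploit the fact that $\cK[u]-\cK[v]=J[u]-J[v]$, so the problem reduces to estimating the difference of the Duhamel terms. Starting from the elementary pointwise bound
\begin{align*}
  \bigl||u|^{\gam-1}u-|v|^{\gam-1}v\bigr|
  \leq C\bigl(|u|^{\gam-1}+|v|^{\gam-1}\bigr)|u-v|,
\end{align*}
I would apply the Hölder-type inequality for Lorentz spaces, namely Lemma \ref{lem:Lorentz}(\ref{lem:Lorentz-1}) and (\ref{lem:Lorentz-2}), with the splitting $1/(p/\gam)=(\gam-1)/p+1/p$, to obtain
\begin{align*}
  \bigl\||u|^{\gam-1}u-|v|^{\gam-1}v\bigr\|_{L^{p/\gam,\infty}_{\mathrm{ul}}}
  \leq C\bigl(\|u\|_{L^{p,\infty}_{\mathrm{ul}}}^{\gam-1}+\|v\|_{L^{p,\infty}_{\mathrm{ul}}}^{\gam-1}\bigr)
  \|u-v\|_{L^{p,\infty}_{\mathrm{ul}}}
  \leq CM^{\gam-1}\|u-v\|_{L^{p,\infty}_{\mathrm{ul}}}
\end{align*}
for $u,v\in B_{M}$.

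Next I would rerun the argument of Lemma \ref{lem3-1}, splitting by cases.

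\textbf{Case $p>\frac{N(\gam-1)}{\theta}$.} Using Proposition \ref{decaySt-Lo} directly,
\begin{align*}
  \|J[u](\cdot,t)-J[v](\cdot,t)\|_{L^{p,\infty}_{\mathrm{ul}}}
  &\leq C\int_{0}^{t}\bigl\{1+(t-\tau)^{-\frac{N(\gam-1)}{\theta p}}\bigr\}
  \||u|^{\gam-1}u-|v|^{\gam-1}v\|_{L^{p/\gam,\infty}_{\mathrm{ul}}}\,d\tau\\
  &\leq C T^{1-\frac{N(\gam-1)}{\theta p}}M^{\gam-1}\|u-v\|_{X_{T}},
\end{align*}
which becomes $\leq\frac12\|u-v\|_{X_{T}}$ by shrinking $T$.

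\textbf{Case $p=\frac{N(\gam-1)}{\theta}$.} Here the exponent $1-N(\gam-1)/(\theta p)$ vanishes, so I would repeat the interpolation trick used for Lemma \ref{lem3-1}: choose $\rho,k(\rho)$ as in \eqref{eq:3-2}, set $\eta=\lam^{b}$ as in \eqref{eq:3-3}, split $J[u]-J[v]=D_{\eta}^{0}+D_{\eta}^{1}$ accordingly, estimate $D_{\eta}^{0}$ in $L^{\rho,\infty}_{\mathrm{ul}}$ and $D_{\eta}^{1}$ in $L^{k(\rho),\infty}_{\mathrm{ul}}$ (each acquires a factor $CM^{\gam-1}\|u-v\|_{X_{T}}$ multiplied by the appropriate power of $\lam$), and conclude by Corollary \ref{cor-LRI} that
\begin{align*}
  \|J[u]-J[v]\|_{X_{T}}\leq CM^{\gam-1}\|u-v\|_{X_{T}}.
\end{align*}

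Since $M$ will be taken small in the proofs of Theorem \ref{thm1}(\ref{thm1-1}) and (\ref{thm1-2}) (through smallness of $\|\mu\cd B^{-\theta}_{(p,\infty),1}\|$ and of $\limsup_{j}2^{(\ep-\theta)j}\|\Del_{j}\mu\cd L^{\frac{Np}{N+p\ep},\infty}_{\mathrm{ul}}\|$ via Propositions \ref{prop3-an} and \ref{propI-1}), this coefficient can be made $\leq 1/2$. In the setting of Theorem \ref{thm1}(\ref{thm1-3}) there is no smallness assumption on $\mu$, but $I[\mu]$ is controlled on small time intervals, so a rescaling argument (as indicated in Remark \ref{rmk3}) reduces to the small-data case.

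The main obstacle, and the reason the critical case deserves separate treatment, is that at $p=\frac{N(\gam-1)}{\theta}$ no power of $T$ is available to absorb the constant, so the contraction estimate must be obtained purely through smallness of $M^{\gam-1}$; this forces the interpolation detour and must be aligned with the choice of $M$ determined by \eqref{determine:M} in Lemma \ref{lem3-1}.
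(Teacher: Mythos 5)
Your proof is correct and follows essentially the same route as the paper: the paper likewise prepares the difference estimate $\norm{|f|^{q-1}f-|g|^{q-1}g\cd L^{\sig,\infty}_{\mathrm{ul}}}\leq q\left(\norm{f\cd L^{q\sig,\infty}_{\mathrm{ul}}}^{q-1}+\norm{g\cd L^{q\sig,\infty}_{\mathrm{ul}}}^{q-1}\right)\norm{f-g\cd L^{q\sig,\infty}_{\mathrm{ul}}}$ via Lemma \ref{lem:Lorentz}, treats $p>\frac{N(\gam-1)}{\theta}$ with Proposition \ref{decaySt-Lo} to gain the factor $T^{1-\frac{N(\gam-1)}{\theta p}}$, and handles the critical case $p=\frac{N(\gam-1)}{\theta}$ by repeating the real-interpolation splitting of Lemma \ref{lem3-1}, with the contraction constant $CM^{\gam-1}\leq 1/2$ coming from the choice of $M$ in \eqref{determine:M}. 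The only small inaccuracy is in your closing remark: for Theorem \ref{thm1}(\ref{thm1-3}) the paper does not rescale but simply fixes $M$ small via \eqref{determine:M} and then shrinks $T$ so that $\norm{I[\mu]\cd X_{T}}\leq M/2$; this does not affect the proof of the lemma itself.
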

\begin{proof}
  We start by preparing the following estimate, in view of Lemma \ref{lem:LM}.
  %First, by using Lemma \ref{lem:LM}, we prepare the following estimate.
  % Note that the estimate
  % \begin{align*}
  %   \norm{|f|^{p-1}f-|g|^{p-1}g\mid L^{\sig,\infty}}
  %   \leq p\left(
  %   \norm{f\mid L^{p\sig,\infty}}^{p-1}
  %   +\norm{g\mid L^{p\sig,\infty}}^{p-1}
  %   \right)\norm{f-g\mid L^{p\sig,\infty}}
  % \end{align*}
  %holds for all $p>1$, $\sig\geq 1$ and for all $f$, $g\in L^{p\sig,\infty}$.
  %To see this, we use Lemma \ref{lem:Lorentz} as follows.
  For all $\rho\geq\sig>1$, $\nu>1$ and for all $f$, $g\in M^{\rho\nu}_{\sig\nu,\infty}$,
  we have
  \begin{align}\label{eq:mvt}
    &\norm{|f|^{\nu-1}f-|g|^{\nu-1}g\cd M^{\rho}_{\sig,\infty}}
    \notag
    \\
    &\qquad\qquad
    \leq
    \nu\norm{\max\{|f|;|g|\}^{\nu-1}|f-g|
    \cd M^{\rho}_{\sig,\infty}}\notag
    \\
    &\qquad\qquad
    \leq
    \nu\norm{\max\{|f|;|g|\}^{\nu-1}
    \cd M^{\frac{\nu\rho}{\nu-1}}_{\frac{\nu\sig}{\nu-1},\infty}}
    \norm{f-g\cd M^{\nu\rho}_{\nu\sig,\infty}}\notag
    \\
    &\qquad\qquad
    \leq
    \nu\left(
    \norm{f\cd M^{\nu\rho}_{\nu\sig,\infty}}^{\nu-1}
    +\norm{g\cd M^{\nu\rho}_{\nu\sig,\infty}}^{\nu-1}
    \right)
    \norm{f-g\cd M^{\nu\rho}_{\nu\sig,\infty}}.
  \end{align}

  \noindent
  {\bf Case $p=N(\gam-1)/\theta$.}
  %\underline{\bf $p=\frac{N(\gam-1)}{\theta}$ case.}
  %Utilizing \eqref{eq:mvt} and carrying out a similar procedure as in the proof of Lemma \ref{lem3-1}, the claim follows.
  %Similar calculation of Lemma \ref{lem3-1} gives the claim.
  Take arbitrary $u,v\in B_{M}$.
  Again we take $r$ and $k(r)$ as \eqref{eq:3-2} holds.
  Using
  Proposition \ref{cor-LRI} in the form
  $M^{p}_{q,\infty}\supset\left(M^{r}_{qr/p,\infty},M^{k(r)}_{qk(r)/p,\infty}\right)_{1/2,\infty}$,
  we observe that the estimate
  \begin{align*}
    &\norm{\cK[u]-\cK[v]\cd M^{p}_{q,\infty}}
    \\
    &=
    C\norm{
        \left(\int_{0}^{\eta}+\int_{\eta}^{\infty}\right)
        \chi_{\left]0,t\right[}(\tau)
        S(\tau)\left[|u|^{\gam-1}u(\cdot,t-\tau)-|v|^{\gam}v(\cdot,t-\tau)\right]\,d\tau
    \cd M^{p}_{q,\infty}}
    \\
    &\leq C\sup_{\lam>0}\bigg\{
    \lam^{-1/2}
    \int_{0}^{\eta}
    \chi_{\left]0,t\right[}(\tau)
    \norm{
        S(\tau)\left[|u|^{\gam-1}u(\cdot,t-\tau)-|v|^{\gam}v(\cdot,t-\tau)\right]
    \cd M^{r}_{qr/p,\infty}}
    d\tau
    \\
    &\qquad+\lam^{1/2}
    \int_{\eta}^{\infty}
    \chi_{\left]0,t\right[}(\tau)
    \norm{
        S(\tau)\left[|u|^{\gam-1}u(\cdot,t-\tau)-|v|^{\gam}v(\cdot,t-\tau)\right]
    \cd M^{k(r)}_{qk(r)/p,\infty}}
    d\tau\bigg\}
    \\
    &\leq C
    \sup_{\lam>0}
    \left(
        \lam^{-1/2}
        \int_{0}^{\eta}
        \tau^{\frac{N}{\theta}\left(\frac{1}{r}-\frac{\gam}{p}\right)}
        \,d\tau
        +\lam^{1/2}
        \int_{\eta}^{\infty}
        \tau^{\frac{N}{\theta}\left(\frac{1}{k(r)}-\frac{\gam}{p}\right)}
        \,d\tau
    \right)
    \\
    &\qquad\qquad\qquad\qquad\times
    \left(\norm{u\cd X_{T}}^{\gam-1}
    +\norm{v\cd X_{T}}^{\gam-1}\right)
    \norm{u-v\cd X_{T}}
    \\
    &\leq C
    \sup_{\lam>0}
    \left(
    \lam^{-1/2}\eta^{1+\frac{N}{\theta r}-\frac{N\gam}{\theta p}}
    +\lam^{1/2}\eta^{1+\frac{N}{\theta k(r)}-\frac{N\gam}{\theta p}}
    \right)
    M^{\gam-1}\norm{u-v\cd X_{T}}
  \end{align*}
  holds for any $\eta>0$,
  through the use of \eqref{eq:mvt} and Proposition \ref{decaySt-Lo}.
  At this stage, optimizing in $\eta$ and taking sufficiently small $M>0$,
  we derive
  \begin{align}\label{determine:M-2}
    \norm{\cK[u]-\cK[v]\cd M^{p}_{q,\infty}}
    \leq C\sup_{\lam>0}(1+1)M^{\gam-1}
    \norm{u-v\cd X_{T}}
    \leq
    \frac{1}{2}\norm{u-v\cd X_{T}}.
  \end{align}

  %\underline{\bf $p>\frac{N(\gam-1)}{\theta}$ case.}
  \noindent
  {\bf Case $p>N(\gam-1)/\theta$.}
  Utilizing \eqref{eq:mvt} and carrying out a similar procedure as in the proof of Lemma \ref{lem3-1}, the claim follows.
  % \textcolor{red}{DELETE the following}
  % Take any $u,v\in B_{M}$.
  % For all $0<t<T\leq 1$,
  % it holds that
  % \begin{align*}
  %   &\norm{\cK[u](\cdot,t;\mu)-\cK[v](\cdot,t;\mu)
  %   \cd L^{p,\infty}_{\mathrm{ul}}}
  %   =
  %   \norm{J[u](\cdot,t)-J[v](\cdot,t)\cd L^{p,\infty}_{\mathrm{ul}}}
  %   \\&\quad\leq C
  %   \int_{0}^{t}
  %   \norm{S(t-\tau)
  %   \left[|u|^{\gam-1}u(\cdot,\tau)-|v|^{\gam-1}v(\cdot,\tau)\right]
  %   \cd L^{p,\infty}_{\mathrm{ul}}}
  %   \,d\tau
  %   \\
  %   &\quad\leq C
  %   \int_{0}^{t}
  %   \left\{1+(t-\tau)^{-\frac{N(\gam-1)}{\theta p}}\right\}
  %   \norm{|u|^{\gam-1}u(\cdot,\tau)-|v|^{\gam-1}v(\cdot,\tau)
  %   \cd L^{p/\gam,\infty}_{\mathrm{ul}}}
  %   \,d\tau
  %   \\
  %   &\quad\leq C
  %   \int_{0}^{t}
  %   (t-\tau)^{-\frac{N(\gam-1)}{\theta p}}
  %   \left(\norm{u\cd L^{p,\infty}_{\mathrm{ul}}}^{\gam-1}
  %   +\norm{v\cd L^{p,\infty}_{\mathrm{ul}}}^{\gam-1}\right)
  %   \norm{u-v\mid L^{p,\infty}_{\mathrm{ul}}}
  %   \,d\tau
  %   \\
  %   &\quad\leq C
  %   t^{1-\frac{N(\gam-1)}{\theta p}}
  %   M^{\gam-1}
  %   \norm{u-v\mid X_{T}}
  %   \leq\frac{1}{2}\norm{u-v\mid X_{T}}
  % \end{align*}.
\end{proof}

\begin{pf1}
  From Lemma \ref{lem3-1}, Lemma \ref{lem3-2} and the contraction mapping theorem, we only have to show $\norm{I[\mu]\cd X_{T}}\leq M/2$.
  %Lemma \ref{lem3-2} says that there exists a fixed point in $B_{M}$ for sufficiently small $M>0$.
  %Assertion (\ref{thm1-1}) is proved by combining Proposition \ref{prop3-an}, Lemma \ref{lem3-1} and Lemma \ref{lem3-2}.
  Assertion (\ref{thm1-1}) is proved by Proposition \ref{prop3-an}.
  We next prove (\ref{thm1-3}).
  This is established by following calculation.
  We use Proposition \ref{prop:2-2} and \ref{decaySt} to see that
  \begin{align*}
    \norm{I[\mu](\cdot,t)\cd M^{p}_{q,\infty}}
    &\leq C
    \int_{0}^{t}
    \norm{S(t-\tau)\mu\cd B^{0,1}_{p,q,\infty}}
    \,d\tau\notag
    \\
    &\leq C
    \int_{0}^{t}
    \left\{1+(t-\tau)^{s/\theta}\right\}
    \norm{\mu\cd B^{s,\infty}_{p,q,\infty}}
    \,d\tau\notag
    \\
    &\leq
    C\left(t+t^{1+s/\theta}\right)\norm{\mu\cd B^{s,\infty}_{p,q,\infty}}
    \leq
    C\left(T+T^{1+s/\theta}\right)\norm{\mu\cd B^{s,\infty}_{p,q,\infty}}
  \end{align*}
  holds for all $0<t\leq T$ and thus
  \begin{align}\label{eq:3-7}
    \norm{I[\mu]\cd X_{T}}
    &\leq
    C\left(T+T^{1+s/\theta}\right)
    \norm{\mu\cd B^{s,\infty}_{p,q,\infty}}
    \leq\frac{M}{2}
  \end{align}
  holds for sufficiently small $T>0$.
  Finally we prove (\ref{thm1-2}).
  Assume that $\mu\in\cS'(\R^{N})$ satisfies
  \begin{align*}
    \limsup_{j\to\infty}2^{(\ep-\theta)j}
    \norm{\Del_{j}\mu\cd
    M^{\frac{Np}{N+p\ep}}_{\frac{Nq}{N+p\ep},\infty}
    }<\del,
  \end{align*}
  where $\del>0$ is determined later.
  By definition of $\limsup$, there exists an integer $m\in\Z$ such that $2^{(\ep-\theta)j}\norm{\Del_{j}\mu\cd M^{\frac{Np}{N+p\ep}}_{\frac{Nq}{N+p\ep},\infty}}<\del$ holds for all $j\geq m$.
  Set $\mu_{1}:=\sum_{j=0}^{m}\Del_{j}\mu$ and $\mu_{2}:=\mu-\mu_{1}$.
  Then for every $j\in\N$, we have the following identities.
  \begin{align*}
    \Del_{j}\mu_{1}&=
    \begin{cases}
      \Del_{j}\mu
      &\text{for }j\leq m-1,
      \\
      \left(\Del_{m-1}+\Del_{m}\right)\Del_{j}\mu
      &\text{for }j=m, m+1,
      \\
      0&\text{for }j\geq m+2,
    \end{cases}
    \\
    \Del_{j}\mu_{2}&=
    \begin{cases}
      0&\text{for }j\leq m-1,
      \\
      \left(\Del_{m+1}+\Del_{m+2}\right)\Del_{j}\mu
      &\text{for }j=m, m+1,
      \\
      \Del_{j}\mu&\text{for }j\geq m+2.
    \end{cases}
  \end{align*}
  Thus we have $\norm{\mu_{2}\cd B^{\ep-\theta,\infty}_{\frac{Np}{N+p\ep},\frac{Nq}{N+p\ep},\infty}}<C\del$ and
  \begin{align*}
    \norm{\mu_{1}\cd B^{\sig,\infty}_{p,q,\infty}}
    &\leq C\max_{0\leq j\leq m+1}2^{\sig j}
    \norm{\Del_{j}\mu\cd M^{p}_{q,\infty}}
    \leq C
    \max_{0\leq j\leq m+1}2^{(\sig+\theta)j}2^{-\theta j}
    \norm{\Del_{j}\mu\cd M^{p}_{q,\infty}}
    \\
    &\leq C2^{(\sig+\theta)(m+1)}
    \norm{\mu\cd B^{-\theta,\infty}_{p,q,\infty}}
    \leq C2^{(\sig+\theta)(m+1)}
    \norm{\mu\cd B^{\ep-\theta,\infty}_{\frac{Np}{N+p\ep},\frac{Nq}{N+p\ep},\infty}}
  \end{align*}
  for some $\sig\in\left]-\theta,0\right[$.
  Now applying \eqref{eq:3-7} and Proposition \ref{propI-1} with $r=Np/(N+p\ep)$ to $\mu_{1}$ and $\mu_{2}$ respectively, we get
  \begin{align*}
    \norm{I[\mu]\cd X_{T}}
    &\leq
    \norm{I[\mu_{1}]\cd X_{T}}
    +
    \norm{I[\mu_{2}]\cd X_{T}}
    \\
    &\leq
    C\left(T+T^{1+\sig/\theta}\right)
    \norm{\mu_{1}\cd B^{\sig,\infty}_{p,q,\infty}}
    +
    C\norm{\mu_{2}\cd B^{\ep-\theta,\infty}_{\frac{Np}{N+p\ep},\frac{Nq}{N+p\ep},\infty}}
    \\
    &\leq
    C\left(T+T^{1+\sig/\theta}\right)
    2^{(\sig+\theta)m}
    \norm{\mu\cd B^{\ep-\theta,\infty}_{\frac{Np}{N+p\ep},\frac{Nq}{N+p\ep},\infty}}
    +
    C\del.
  \end{align*}
  Taking $\del>0$ and $T>0$ sufficiently small, we obtain $\norm{I[\mu]\cd X_{T}}\leq M/2$.
  \qed
\end{pf1}
\begin{rmk}\label{rmk3-5}
  Let $p$ and $q$ satisfy the same conditions in Theorem \ref{thm1}.
  If the inhomogeneous term $\mu$ satisfies one of the conditions in Theorem \ref{thm1},
  we find a number $s<0$ such that $\mu$ belongs to $B^{s,\infty}_{p,q,\infty}\subset B^{s-N/p,\infty}_{\infty}$.
  Then the solution $u(x,t)$ constructed above satisfy the initial condition in the following sense:
  $u(x,t)\to 0$ in the weak-$\ast$ topology of the space $B^{s-N/p,\infty}_{\infty}$ as $t\to+0$.
  %We can check this convergence by using Proposition \ref{prop:SE}.
  Take arbitrary $\psi\in B^{N/p-s,1}_{1}$ and calculate as follows:
  \begin{align*}
    \left|\langle u(\cdot,t),\psi\rangle\right|
    &\leq
    \int_{0}^{t}
    \left|\langle
    S(t-\tau)\mu,\psi
    \rangle\right|
    \,d\tau
    +
    \int_{0}^{t}
    \left|\langle
    S(t-\tau)\left[|u|^{\gam-1}u(\cdot,\tau)\right],\psi
    \rangle\right|
    \,d\tau
    \\
    &\leq
    \int_{0}^{t}
    \norm{S(t-\tau)\mu\cd B^{s-N/p,\infty}_{\infty}}
    \norm{\psi\cd B^{N/p-s,1}_{1}}
    \,d\tau
    \\
    &\quad+
    \int_{0}^{t}
    \norm{S(t-\tau)\left[|u|^{\gam-1}u(\cdot,\tau)\right]
    \cd B^{s-N/p,\infty}_{\infty}}
    \norm{\psi\cd B^{N/p-s,1}_{1}}
    \,d\tau
    \\
    &\leq
    \norm{\psi\cd B^{N/p-s,1}_{1}}
    \bigg(
    \int_{0}^{t}
    \norm{S(t-\tau)\mu\cd B^{s,\infty}_{p,q,\infty}}
    \,d\tau
    % \\
    % \bigg(
    % \text{USE: }B^{s-N/p}_{\infty,\infty}&
    % \supset
    % B^{s}_{(p,\infty),\infty}
    % \supset
    % B^{s+\frac{(\gam-1)N}{p}}_{(p/\gam,\infty),\infty}
    % \bigg)
    \\
    &\qquad+C
    \int_{0}^{t}
    \norm{S(t-\tau)\left[|u|^{\gam-1}u(\cdot,\tau)\right]
    \cd B^{\max\left\{0,s+\frac{(\gam-1)N}{p}\right\},\infty}_{p/\gam,q/\gam,\infty}}
    \,d\tau
    \bigg)
    \\
    &\leq
    C\norm{\psi\cd B^{N/p-s,1}_{1}}
    \bigg(
    \int_{0}^{t}
    1\,d\tau
    \norm{\mu\cd B^{s,\infty}_{p,q,\infty}}
    \\
    &\qquad+
    \int_{0}^{t}
    (t-\tau)
    ^{\min\left\{0,-\frac{s}{\theta}-\frac{(\gam-1)N}{p\theta}\right\}}
    \norm{|u|^{\gam-1}u(\cdot,\tau)
    \cd B^{0,\infty}_{p/\gam,q/\gam,\infty}}
    \,d\tau
    \bigg)
    \\
    &\leq
    C\norm{\psi\cd B^{N/p-s,1}_{1}}
    \left(
    \norm{\mu\cd B^{s,\infty}_{p,q,\infty}}
    +
    \norm{u\cd X_{T}}^{\gam}
    \right)
    t^{\min\left\{1,1-\frac{s}{\theta}-\frac{(\gam-1)N}{p\theta}\right\}}.
  \end{align*}
  Conditions $p\geq{N(\gam-1)}/{\theta}$ and $s<0$ imply that the last term disappears in the limit $t\to 0$.
\end{rmk}

\begin{pf2}
  Let $p\geq q>\gam$ and $p\geq{N(\gam-1)}/{\theta}$.
  Assume that a function $u=u(x,t)\in L^{\infty}(0,T;M^{p}_{q,\infty}(\R^{N}))$ satisfies the integral equation \eqref{inteq} for almost all $(x,t)\in\R^{N}\times\left]0,T\right[$,
  i.e., we have
  \begin{align*}
    u(x,t)=I[\mu](x,t)+J[u](x,t)
  \end{align*}
  for almost all $(x,t)\in\R^{N}\times\left]0,T\right[$.
  % Then, we show that the tempered distribution $\mu$ appearing in \eqref{inteq}
  % must be an element of $B^{-2}_{(p,\infty),\infty}(\R^{N})$.
  The same calculation of the proof of Lemma \ref{lem3-1} (estimate on $J[u]$) leads
  $J[u](x,t)\in L^{\infty}(0,T;M^{p}_{q,\infty}(\R^{N}))$,
  hence the inclusion
  \begin{align*}
    I[\mu](x,t)=u(x,t)-J[u](x,t)&\in L^{\infty}(0,T;M^{p}_{q,\infty}(\R^{N})).
  \end{align*}
  Finally we apply Proposition \ref{prop0} to derive $\norm{\mu\cd B^{-\theta,\infty}_{p,q,\infty}}<\infty$.
  \qed
\end{pf2}

\appendix
% \section*{Appendix. On Cauchy problems}

%\newpage
\begin{ack}
  The author was supported in part by JST SPRING, Grant Number JPMJSP2108, and KAKENHI \#24KJ0623.
\end{ack}
%\newpage
\section{Estimate \eqref{eq:0-finite} for general $\theta>0$}\label{sec-A}
In this section, we confirm that
\begin{align}
  \norm{
    \cF^{-1}
    \left[
      \Phi_{(0)}(\xi)C_{T}(\xi)
    \right]
  \cd L^{1}}
  <\infty
\end{align}
is valid for all $\theta>0$,
where
\begin{align*}
  C_{T}(\xi)=C_{T}^{\theta}(\xi):=
  \left(
    \int_{0}^{T}
    \frac{1-e^{-|\xi|^{\theta}t}}{|\xi|^{\theta}}
    \,dt
  \right)^{-1}
  =
  \frac{|\xi|^{2\theta}}{|\xi|^{\theta}T+e^{-|\xi|^{\theta}T}-1}.
\end{align*}
We modify the argument of Lemma 2.1 in \cite{MYZ2008}.
\begin{proof}
  We first note that there exists $C>0$ such that
  \begin{align}\label{eq:A-1}
    \left|
    \cF^{-1}
    \left[
      \Phi_{(0)}(\xi)C_{T}(\xi)
    \right](x)
    \right|
    \leq C \quad\text{for all }x\in\R^{N}
  \end{align}
  because $\Phi_{(0)}(\xi)C_{T}(\xi)$ is integrable with respect to $\xi$.
  %because $\Phi_{(0)}(\xi)C_{T}(\xi)\in L^{1}_{\xi}(\R^{N})$ holds.
  Thus we focus on $|x|\gg 1$.

  We define the operator
  \begin{align*}
    L(x,D):=
    \frac{xD_{\xi}}{\sqrt{-1}|x|^{2}}.
  \end{align*}
  Then we have
  \begin{align*}
    L(x,D)e^{\sqrt{-1}x\xi}
    =
    \frac{1}{\sqrt{-1}|x|^{2}}
    \sum_{k=1}^{N}x_{k}
    \left(
      \frac{\pl}{\pl\xi_{k}}
      e^{\sqrt{-1}x\xi}
    \right)
    =
    e^{\sqrt{-1}x\xi}.
  \end{align*}
  The conjugate operator is
  \begin{align*}
    L^{\ast}(x,D):=
    \frac{\sqrt{-1}xD_{\xi}}{|x|^{2}}.
  \end{align*}
  Thus we can write $\cF^{-1}
  \left[
    \Phi_{(0)}(\xi)C_{T}(\xi)
  \right]$ as
  \begin{align*}
    \cF^{-1}
    \left[
      \Phi_{(0)}(\xi)C_{T}(\xi)
    \right]
    =
    \frac{1}{(2\pi)^{N}}
    \int_{\R^{N}}
    e^{\sqrt{-1}x\xi}
    L^{\ast}(x,D)\left(
      \frac{|\xi|^{2\theta}\Phi_{(0)}(\xi)}{|\xi|^{\theta}T+e^{-|\xi|^{\theta}T}-1}
    \right)
    \,d\xi.
  \end{align*}
  Set a $C_{c}^{\infty}(\R^{N})$-function $\rho$ satisfying
  \begin{align*}
    \rho(\xi)=
    \begin{cases}
      1,\quad |\xi|\leq 1,
      \\
      0,\quad |\xi|>2
    \end{cases}
  \end{align*}
  and decompose $\cF^{-1}
  \left[
    \Phi_{(0)}(\xi)C_{T}(\xi)
  \right]$ as
  \begin{align*}
    \cF^{-1}
    \left[
      \Phi_{(0)}(\xi)C_{T}(\xi)
    \right]
    &=
    \frac{1}{(2\pi)^{N}}
    \int_{\R^{N}}
    e^{\sqrt{-1}x\xi}
    \rho\left(\frac{\xi}{\del}\right)
    L^{\ast}(x,D)
    \left(
      \Phi_{(0)}(\xi)C_{T}(\xi)
    \right)
    \,d\xi
    \\
    &\quad+
    \frac{1}{(2\pi)^{N}}
    \int_{\R^{N}}
    e^{\sqrt{-1}x\xi}
    \left[
      1-\rho\left(\frac{\xi}{\del}\right)
    \right]
    L^{\ast}(x,D)
    \left(
      \Phi_{(0)}(\xi)C_{T}(\xi)
    \right)
    d\xi
    \\
    &=:I+II,
  \end{align*}
  where $\del\in\left]0,1/2\right[$ to be fixed later.
  Recall that the function $\Psi(s):=\frac{s^{2}}{s+e^{-s}-1}$ belongs to $C^{\infty}(\left[0,\infty\right[)$.
  By mathematical induction on $k$, we can show that there exist constants $C^{k}_{j,i}$ which are independent of $T$ such that
  \begin{align*}
    \left(L^{\ast}\right)^{k}C_{T}(\xi)
    &=
    \left(L^{\ast}\right)^{k}
    \left(
      \frac{|\xi|^{2\theta}}{|\xi|^{\theta}T+e^{-|\xi|^{\theta}T}-1}
    \right)
    =
    % T^{-2}
    % \left(L^{\ast}\right)^{k}
    % \left(
    %   \frac{T^{2}|\xi|^{2\theta}}{|\xi|^{\theta}T+e^{-|\xi|^{\theta}T}-1}
    % \right)
    % =
    T^{-2}
    \left(L^{\ast}\right)^{k}
    \left(
      \Psi(|\xi|^{\theta}T)
    \right)
    \\
    &=
    \frac{\sqrt{-1}^{k}}{|x|^{2k}}
    \sum_{j=1}^{k}
    T^{j-2}
    \Psi^{(j)}(|\xi|^{\theta}T)
    \sum_{\substack{0\leq i\leq k-j\\2i\leq k}}
    C^{k}_{j,i}|\xi|^{j\theta+2i-2k}(x\xi)^{k-2i}|x|^{2i}
  \end{align*}
  holds for all $k\geq 1$ and for all $\xi$.
  Note that
  $
    \left|\frac{d^{j}}{ds^{j}}\Psi\left(Ts\right)\right|
    \leq T^{j}C
  $
  for bounded $s$ and for $j\in\N$.
  Thus we have the estimate
  \begin{align*}
    \left|
    \left(L^{\ast}\right)^{k}
      C_{T}(\xi)
    \right|
    &\leq
    \frac{C_{k}}{|x|^{k}}
    \sum_{j=1}^{k}
    T^{j-2}
    |\xi|^{j\theta-k}
    \leq
    C_{k}T^{-1}|x|^{-k}|\xi|^{\theta-k}.
  \end{align*}
  %for bounded $\xi$.
  For $I$, we have
  \begin{align*}
    |I|
    &\leq C\left|
    \int_{\R^{N}}
    e^{\sqrt{-1}x\xi}
    \rho\left(\frac{\xi}{\del}\right)
    L^{\ast}(x,D)
    \left(
      C_{T}(\xi)\Phi_{(0)}(\xi)
    \right)
    \,d\xi
    \right|
    \\
    &= C
    \int_{B_{2\del}(0)}
    \left|
    e^{\sqrt{-1}x\xi}
    \right|
    \left|
    L^{\ast}(x,D)
      C_{T}(\xi)
    \right|
    \,d\xi
    \\
    &\leq
    C\int_{B_{2\del}(0)}C_{1}T^{-1}|x|^{-1}|\xi|^{\theta-1}
    \,d\xi
    % \\
    % &\leq \frac{C}{T|x|}
    % \int_{B_{2\del}(0)}
    % |\xi|^{-1}
    % \left(
    %   \frac{\frac{2\theta}{3}|\xi|^{\theta}+o(|\xi|^{\theta})}
    %   {\left(1-\frac{T}{3}|\xi|^{\theta}+o(|\xi|^{\theta})\right)^{2}}
    % \right)
    % \,d\xi
    % \\
    % &\leq \frac{C}{T|x|}
    % \int_{B_{2\del}(0)}
    % |\xi|^{\theta-1}
    % \,d\xi
    =
    CT^{-1}|x|^{-1}\del^{\theta-1+N}.
  \end{align*}
  Next we estimate $II$.
  Note that the estimate
  \begin{align*}
    \left|
    L^{\ast}(x,D)^{k}
    \left[
      1-\rho\left(\frac{\xi}{\del}\right)
    \right]
    \right|
    \leq
    \frac{C_{N,k}}{\del^{k}|x|^{k}}
  \end{align*}
  holds for $k\geq 1$.
  To estimate $II$, we integrate by parts $N$ times.
  %Take sufficiently large number $m(\geq N+1)$ to yield
  \begin{align*}
    |II|
    &\leq
    C\int_{\R^{N}}
    \left|
    e^{\sqrt{-1}x\xi}
    \right|
    \left|
    \left(L^{\ast}\right)^{N}
    \left\{
    \left[
      1-\rho\left(\frac{\xi}{\del}\right)
    \right]
    L^{\ast}
    \left(
      C_{T}(\xi)\Phi_{(0)}(\xi)
    \right)
    \right\}
    \right|
    \,d\xi
    \\
    &\leq
    C\int_{\R^{N}\setminus B_{\del}(0)}
    \left[
      1-\rho\left(\frac{\xi}{\del}\right)
    \right]
    \left|
    \left(L^{\ast}\right)^{N+1}
    \left(
      C_{T}(\xi)\Phi_{(0)}(\xi)
    \right)
    \right|
    \,d\xi
    \\
    &\quad +
    C\int_{B_{2\del}(0)\setminus B_{\del}(0)}
    \sum_{k=1}^{N}
    \left|
      \left(L^{\ast}\right)^{k}
      \left[
        1-\rho\left(\frac{\xi}{\del}\right)
      \right]
      \left(L^{\ast}\right)^{N-k+1}
      \left(
        C_{T}(\xi)\Phi_{(0)}(\xi)
      \right)
    \right|
    \,d\xi
    \\
    &=:II_{1}+II_{2}.
  \end{align*}
  Because $B_{3}(0)\subset\left\{\xi\in\R^{N}\mid\Phi_{(0)}(\xi)\equiv 1\right\}$ holds, it is seen that
  \begin{align}\label{eq:a-3}
    |II_{1}|&\leq
    C\left(\int_{\text{supp}\Phi_{(0)}\setminus B_{3}(0)}
    +\int_{B_{3}(0)\setminus B_{\del}(0)}\right)
    \left|
    \left(L^{\ast}\right)^{N+1}
    \left(
      C_{T}(\xi)\Phi_{(0)}(\xi)
    \right)
    \right|
    \,d\xi
    \notag
    \\
    &\leq \frac{C_{N+1}}{|x|^{N+1}}
    \int_{\text{supp}\Phi_{(0)}\setminus B_{3}(0)}
    T^{-2}
    \max_{\xi}
    \left|D_{\xi}^{N+1}\left[
      \Psi(T|\xi|^{\theta})\Phi_{(0)}(\xi)
    \right]\right|\,d\xi
    \\
    &\quad+\int_{B_{3}(0)\setminus B_{\del}(0)}
    \left|
    \left(L^{\ast}\right)^{N+1}
      C_{T}(\xi)
    \right|
    \,d\xi
    \notag
    \\
    &\leq\frac{C}{T^{2}|x|^{N+1}}
    +
    \frac{C}{T|x|^{N+1}}
    \int_{B_{3}(0)\setminus B_{\del}(0)}
    |\xi|^{\theta-N-1}\,d\xi
    \leq
    \frac{C}{T^{2}|x|^{N+1}}+\frac{C\del^{\theta-1}}{T|x|^{N+1}}
    \notag
  \end{align}
  for $II_{1}$.
  For $II_{2}$, we have
  \begin{align*}
    |II_{2}|
    &\leq
    \sum_{k=1}^{N}
    \frac{C}{T\del^{k}|x|^{k}}
    \int_{B_{2\del}(0)\setminus B_{\del}(0)}
    |x|^{-(N-k+1)}|\xi|^{\theta-(N-k+1)}\,d\xi
    = CT^{-1}
    |x|^{-N-1}
    \del^{\theta-1}.
  \end{align*}
  %II_{2}$は，積分区間をこのようにとらないとまずい．
  Now we choose $\del=|x|^{-1}$.
  Considering \eqref{eq:A-1}, we obtain the following estimate
  \begin{align*}
    \left|
    \cF^{-1}
    \left[
      \Phi_{(0)}(\xi)C_{T}(\xi)
    \right](x)
    \right|
    \leq
    \frac{CT^{-1}}{\left(1+|x|\right)^{N+\theta}}
    +
    \frac{CT^{-2}}{\left(1+|x|\right)^{N+1}}
    \leq
    \frac{CT^{-2}}{\left(1+|x|\right)^{N+\min\{1,\theta\}}}.
  \end{align*}
  Thus we conclude that \eqref{eq:0-finite} holds for general $\theta>0$.
\end{proof}
\begin{rmk}[On \eqref{eq:a-3}]\label{rmk:a-1}
  Here, we used the estimate
  \begin{align*}
    \left|D_{\xi}^{N+1}\left[
      \Psi(T|\xi|^{\theta})\Phi_{(0)}(\xi)
    \right]\right|
    \leq
    \max_{\xi\in\mathrm{supp}\Phi_{(0)}\setminus B_{3}(0)}
    \left|D_{\xi}^{N+1}\left[
      \Psi(T|\xi|^{\theta})\Phi_{(0)}(\xi)
    \right]\right|
    \leq C_{N}<\infty.
  \end{align*}
  This attempt fails if we do with $\left\{\Psi(T|\xi|^{\theta})\Phi_{j}(\xi)\right\}_{j\in\Z}$.
\end{rmk}
%%%%%%%%%%%%%%%%%%%%%%%%%%%%%%%%%%%%%%%%%%%%%%%%%%%%%%%%%%%%%%%%%%%%%%%%
\section{On the local Morrey-type spaces}\label{sec-B}
%%%%%%%%%%%%%%%%%%%%%%%%%%%%%%%%%%%%%%%%%%%%%%%%%%%%%%%%%%%%%%%%%%%%%%%%
In this section, we make some comments on the local Morrey-type spaces.
\begin{prop}\label{prop:B-1}
  Let $1\leq q_{0}<q_{1}<p<\infty$.
  Then the continuous embeddings $M^{p}_{q_{0}}\supset M^{p}_{q_{1}}\supset L^{p,\infty}_{\mathrm{ul}}$
  hold and these inclusions are proper,
  i.e., there exists some functions $f$ such that $f\in M^{p}_{q_{0}}$ but $f\not\in M^{p}_{q_{1}}$.
  Here $L^{p,\infty}_{\mathrm{ul}}$ is defined as the set of measurable function $f$ on $\R^{N}$ satisfying
  \begin{align*}
    \norm{f\cd L^{p,\infty}_{\mathrm{ul}}}
    :=
    \sup_{z\in\R^{N}}
    \norm{f\chi_{B(z,1)}\cd L^{p,\infty}}
    <\infty.
  \end{align*}
\end{prop}
\begin{proof}
  The embedding $M^{p}_{q_{0}}\supset M^{p}_{q_{1}}$ comes from the H\"{o}lder inequality.
  We can prove the latter $M^{p}_{q_{1}}\supset L^{p,\infty}_{\mathrm{ul}}$ in the same manner as Lemma 1.7 in \cite{KozonoYamazaki}.
  Thus it is enough to give an example $f\in M^{p}_{q_{0}}\setminus M^{p}_{q_{1}}$.
  We modify the proof of Theorem 1.3 (i) in \cite{GHI2018}.
  % We seek such $f$ in the following form
  % \begin{align*}
  %   f(x)=\sum_{k=m}^{\infty}
  %   M(k)\chi_{\left\{x\in\R^{N}\cd\frac{1}{k+1}<|x|\leq\frac{1}{k+1}+\ep(k)\right\}}(x).
  % \end{align*}
  % We take $0<\del<1$ and define $M(k)$ and $\ep(k)$ as
  % \begin{align*}
  %   M(k)^{q_{0}}\ep(k)=\int_{(k+1)^{-1}}^{k^{-1}}x^{-\frac{q_{0}}{p}}\,dx
  % \end{align*}
  For any $c>d>0$, we define the annulus $A^{c}_{d}:=B(0,c)\setminus B(0,d)$,
  and for each $k\in\Z_{\geq 1}$ and any $\del>0$, we define the following quantities:
  \begin{align*}
    &Q_{0}(k):=
    \int_{A^{1/k}_{1/(k+1)}}
    |x|^{-\frac{Nq_{0}}{p}}\,dx,
    \quad
    Q_{1}(k):=
    \int_{A^{1/k}_{1/(k+1)}}
    |x|^{-\frac{Nq_{1}}{p}-\del}\,dx,
    \\
    &a(k):=
    \max\left\{
      \left(\frac{Q_{1}(k)}{Q_{0}(k)}\right)^{\frac{1}{q_{1}-q_{0}}}
      ,\left(\frac{Q_{1}(k)}{\left|A^{1/k}_{1/(k+1)}\right|}\right)^{1/q_{1}}
    \right\}.
  \end{align*}
  The following two inequalities immediately follow from the definition of $a(k)$.
  \begin{align}\label{eq:B-1}
    \frac{Q_{1}(k)}{a(k)^{q_{1}}}
    \leq
    \frac{Q_{0}(k)}{a(k)^{q_{0}}}
    ,\quad
    \left(0<\right)\,
    \frac{Q_{1}(k)}{a(k)^{q_{1}}}
    \leq\left|A^{1/k}_{1/(k+1)}\right|.
  \end{align}
  Since the function $\ep\mapsto r_{k}(\ep):=\left|A^{1/(k+1)+\ep}_{1/(k+1)}\right|$
  is increasing and continuous, and satisfies $r_{k}(0)=0$ and $r_{k}\left(1/k-1/(k+1)\right)=\left|A^{1/k}_{1/(k+1)}\right|$,
  we can define the quantity $\ep(k)\in\left]0,1/k-1/(k+1)\right]$ by the formula
  $
    \left|A^{1/(k+1)+\ep(k)}_{1/(k+1)}\right|=\dfrac{Q_{1}(k)}{a(k)^{q_{1}}}
  $.
  Now we define a function $f:\R^{N}\to\R$ by
  \begin{align*}
    f(x):=
    \sum_{k=1}^{\infty}
    a(k)\chi_{A^{1/(k+1)+\ep(k)}_{1/(k+1)}}(x).
  \end{align*}
  We check that this $f$ satisfies $f\in M^{p}_{q_{0}}\setminus M^{p}_{q_{1}}$.
  Let $m=m(R)$ denote the unique integer which satisfies $1/(m+1)<R\leq 1/m$, for each $R\in\left]0,1\right]$.
  Then we have
  \begin{align*}
    \int_{B(0,R)}|f(x)|^{q_{0}}\,dx
    &=
    \sum_{k=m(R)}^{\infty}
    a(k)^{q_{0}}
    \left|A_{1/(k+1)}^{1/(k+1)+\ep(k)}\right|
    =
    \sum_{k=m(R)}^{\infty}
    a(k)^{q_{0}}
    \frac{Q_{1}(k)}{a(k)^{q_{1}}}
    \\
    \eqref{eq:B-1}\quad
    &\leq
    \sum_{k=m(R)}^{\infty}
    a(k)^{q_{0}}
    \frac{Q_{0}(k)}{a(k)^{q_{0}}}
    \leq
    \int_{B(0,2R)}
    |x|^{-\frac{Nq_{0}}{p}}\,dx
    =
    CR^{-\frac{Nq_{0}}{p}+N}
  \end{align*}
  and thus for arbitrary $R\in\left]0,1\right]$,
  the estimate
  \begin{align*}
    R^{\frac{N}{p}-\frac{N}{q_{0}}}
    \norm{f\chi_{B(0,R)}\cd L^{q_{0}}}
    \leq
    CR^{\frac{N}{p}-\frac{N}{q_{0}}}
    R^{-\frac{N}{p}+\frac{N}{q_{0}}}
    =C<+\infty
  \end{align*}
  holds.
  Similarly, for $\del\in\left]0,N\left(1-\frac{q_{1}}{p}\right)\right[$,
  we see that
  \begin{align*}
    \int_{B(0,R)}|f(x)|^{q_{1}}\,dx
    =
    \sum_{k=m(R)}^{\infty}
    a(k)^{q_{1}}
    \frac{Q_{1}(k)}{a(k)^{q_{1}}}
    \geq
    \int_{B(0,R)}
    |x|^{-\frac{Nq_{1}}{p}-\del}
    \,dx
    =CR^{-\frac{Nq_{1}}{p}-\del+N}.
  \end{align*}
  Therefore, sending $R\to 0$, we deduce
  \begin{align*}
    R^{\frac{N}{p}-\frac{N}{q_{1}}}
    \norm{f\chi_{B(0,R)}\cd L^{q_{1}}}
    \geq C
    R^{\frac{N}{p}-\frac{N}{q_{1}}}
    R^{-\frac{N}{p}-\frac{\del}{q_{1}}+\frac{N}{q_{1}}}
    =CR^{-\frac{\del}{q_{1}}}
    \rightarrow +\infty,
  \end{align*}
  which means $f\not\in M^{p}_{q_{1}}$.
\end{proof}
The function $f$ constructed above does not satisfy that $f(x)\leq a|x|^{-N/p}$ for $x\in B(0,\rho)$,
for any $a>0$ and $\rho>0$.
%%%%%%%%%%%%%%%%%%%%%%%%%%%%%%%%%%%%%%%%%%%%%%%%%%%%%%%%%%
Finally, we remark that considering ``Morrey--Morrey'' spaces is not meaningful.
\begin{prop}\label{prop:B-2}
  Let $1\leq r\leq q\leq p<\infty$.
  Then we have $M^{p}_{r}
  =M^{p}_{M^{q}_{r}}$.
  Here,
  \begin{align*}
    M^{p}_{M^{q}_{r}}
    :=
    \left\{f\in L^{1}_{\mathrm{loc}}(\R^N)
        \cd\norm{f\cd M^{p}_{M^{q}_{r}}}
        :=\sup_{\substack{0<R\leq 1\\ z\in\R^{N}}}
        R^{\frac{N}{p}-\frac{N}{q}}
        \norm{f\chi_{B(z,R)}\cd M^{q}_{r}}
        <\infty
    \right\}.
  \end{align*}
\end{prop}
\begin{proof}
  Fix $R\in\left]0,1\right]$ and $z\in\R^{N}$.
  For all $f\in M^{p}_{M^{q}_{r}}$, we have the inequality
  \begin{align*}
    R^{\frac{N}{p}-\frac{N}{r}}
    \norm{f\chi_{B(z,R)}\cd L^{r}}
    &=
    R^{\frac{N}{p}-\frac{N}{q}}
    R^{\frac{N}{q}-\frac{N}{r}}
    \norm{f\chi_{B(z,R)}\chi_{B(z,R)}\cd L^{r}}
    \\
    &\leq
    R^{\frac{N}{p}-\frac{N}{q}}
    \norm{f\chi_{B(z,R)}\cd M^{q}_{r}}
    \leq
    \norm{f\cd M^{p}_{M^{q}_{r}}},
  \end{align*}
  and this implies $M^{p}_{r}\supset M^{p}_{M^{q}_{r}}$.
  %Next we fix $\left(R,\tilde{R},z,w\right)\in\left]0,1\right]^{2}\times\R^{N+N}$
  Next we take any $f\in M^{p}_{r}$ and
  consider the function
  \begin{align*}
    Q:\left]0,1\right]^{2}\times\R^{N+N}
    &\rightarrow\R,
    \\
    \left(R,\tilde{R},z,w\right)
    &\mapsto
    Q\left[R,\tilde{R},z,w\right]
    :=R^{\frac{N}{p}-\frac{N}{q}}\tilde{R}^{\frac{N}{q}-\frac{N}{r}}
    \norm{f\chi_{B(z,R)}\chi_{B(w,\tilde{R})}\cd L^{r}}.
  \end{align*}
  Assume $R\geq\tilde{R}$.
  Then $p\geq q$ tells us $\left(R/\tilde{R}\right)^{\frac{N}{p}-\frac{N}{q}}\leq 1$
  and thus we get
  \begin{align*}
    Q\left[R,\tilde{R},z,w\right]
    \leq
    \left(\frac{R}{\tilde{R}}\right)^{\frac{N}{p}-\frac{N}{q}}
    \tilde{R}^{\frac{N}{p}-\frac{N}{r}}
    \norm{f\chi_{B(w,\tilde{R})}\cd L^{r}}
    \leq\norm{f\cd M^{p}_{r}}.
  \end{align*}
  Conversely assume $R\leq\tilde{R}$.
  Then $r\leq q$ tells us $\left(R/\tilde{R}\right)^{\frac{N}{r}-\frac{N}{q}}\leq 1$
  and thus we have
  \begin{align*}
    Q\left[R,\tilde{R},z,w\right]
    \leq
    \left(\frac{R}{\tilde{R}}\right)^{\frac{N}{r}-\frac{N}{q}}
    R^{\frac{N}{p}-\frac{N}{r}}
    \norm{f\chi_{B(z,R)}\cd L^{r}}
    \leq\norm{f\cd M^{p}_{r}}.
  \end{align*}
  Combining the above two yields $Q\leq\norm{f\cd M^{p}_{r}}$ and this finishes the proof.
\end{proof}
% Clearly $M^{p}_{q,\infty}\subset M^{p}_{M^{q}_{r}}$ holds, and we also remark that
% Proposition \ref{prop:B-2} holds for (non-local) Morrey spaces $\cM^{p}_{r}=\cM^{p}_{\cM^{q}_{r}}$.
% \begin{COI}
%   The authors declare that they have no conflict of interest.
% \end{COI}
% \begin{DA}
%   The manuscript has no associated data.
% \end{DA}
%%%%%%%%%%%%%%%%%%%%%%%%%%%%%%%%%%%%%%%%%%%%%%%%%%%%%%%%%%%%%%%%%%%%%%%%
%%%%%%%%%%%%%%%%%%%%%%%%%%%    references    %%%%%%%%%%%%%%%%%%%%%%%%%%%
%%%%%%%%%%%%%%%%%%%%%%%%%%%%%%%%%%%%%%%%%%%%%%%%%%%%%%%%%%%%%%%%%%%%%%%%

\end{document}